

\documentclass[a4j,10pt,showkeys]{article}


\usepackage{amsmath,amsthm,amssymb}
\usepackage{geometry}
\geometry{left=25mm,right=25mm,top=25mm,bottom=30mm}
\usepackage{hyperref}
\usepackage[capitalize]{cleveref}
\usepackage{cancel}
\usepackage{mathtools}
\usepackage{bm}
\usepackage{color}
\usepackage{mathrsfs}
\usepackage[dvipsnames]{xcolor}
\hypersetup{
    colorlinks=true,
    citecolor=MidnightBlue,
    linkcolor=Red,
    urlcolor=OliveGreen,
}
\usepackage{tikz}
\usetikzlibrary{calc}
\tikzset{set label/.style={fill=white}}


\newtheorem{theo}{Theorem}[section]
\newtheorem{lemm}[theo]{Lemma}
\newtheorem{prop}[theo]{Proposition}
\newtheorem{cor}[theo]{Corollary}
\newtheorem{claim}{Claim}
\newtheorem*{result}{Main result}
\theoremstyle{definition}
\newtheorem{defi}[theo]{Definition}
\newtheorem{rem}[theo]{Remark}
\newtheorem{assum}{Assumption}
\newtheorem{exam}[theo]{Example}


\newcommand{\bE}{\mathbb{E}}
\newcommand{\bF}{\mathbb{F}}

\newcommand{\bN}{\mathbb{N}}

\newcommand{\bP}{\mathbb{P}}

\newcommand{\bR}{\mathbb{R}}

\newcommand{\cA}{\mathcal{A}}
\newcommand{\cB}{\mathcal{B}}
\newcommand{\cC}{\mathcal{C}}

\newcommand{\cF}{\mathcal{F}}

\newcommand{\cH}{\mathcal{H}}

\newcommand{\cK}{\mathcal{K}}

\newcommand{\cP}{\mathcal{P}}

\newcommand{\cV}{\mathcal{V}}

\newcommand{\sC}{\mathscr{C}}


\newcommand{\ep}{\varepsilon}
\newcommand{\diff}{\mathrm{d}}
\newcommand{\dmu}{\,\mu(\diff\theta)}

\newcommand{\supp}{{\mathrm{supp}\,\mu}}
\newcommand{\Law}{\mathrm{Law}}
\newcommand{\LP}{\mathrm{LP}}
\newcommand{\TV}{\mathrm{TV}}

\newcommand{\UE}{\mathrm{UE}}
\newcommand{\LG}{\mathrm{LG}}

\newcommand{\Hol}{\mathrm{Hol}}

\newcommand{\loc}{\mathrm{loc}}
\newcommand{\Bb}{\cB_\mathrm{b}(\cH_\mu)}
\newcommand{\Cb}{\cC_\mathrm{b}(\cH_\mu)}

\newcommand{\dual}[2]{{}_{\cV^*_\mu}\langle{#1},{#2}\rangle_{\cV_\mu}}
\newcommand{\op}{\mathrm{op}}

\newcommand{\relmiddle}[1]{\mathrel{}\middle#1\mathrel{}}
\newcommand{\1}{\mbox{\rm{1}}\hspace{-0.25em}\mbox{\rm{l}}}


\providecommand{\keywords}[1]{\textbf{Keywords:} #1}
\makeatletter

\@addtoreset{equation}{section}
\makeatother
\def\widebar{\accentset{{\cc@style\underline{\mskip10mu}}}}
\numberwithin{equation}{section}
\allowdisplaybreaks



\title{Weak well-posedness of stochastic Volterra equations with completely monotone kernels and non-degenerate noise}


\author{
Yushi Hamaguchi\footnote{Graduate School of Engineering Science, Department of Systems Innovation, Osaka University. 1-3, Machikaneyama, Toyonaka, Osaka, Japan. Email: \href{mailto:hmgch2950@gmail.com}{hmgch2950@gmail.com}}\ \footnote{The author was supported by JSPS KAKENHI Grant Number 22K13958.}
}


\begin{document}
\maketitle


\begin{abstract}
We establish weak existence and uniqueness in law for stochastic Volterra equations (SVEs for short) with completely monotone kernels and non-degenerate noise under mild regularity assumptions. In particular, our results reveal the regularization-by-noise effect for SVEs with singular kernels, allowing for multiplicative noise with H\"{o}lder diffusion coefficients. In order to prove our results, we reformulate the SVE into an equivalent stochastic evolution equation (SEE for short) defined on a Gelfand triplet of Hilbert spaces. We prove weak well-posedness of the SEE using stochastic control arguments, and then translate it into the original SVE.
\end{abstract}


\keywords
Stochastic Volterra equation; stochastic evolution equation; uniqueness in law; regularization by noise.


\textbf{2020 Mathematics Subject Classification}: 60H20; 60H15; 60G22; 60H50.




\section{Introduction}\label{intro}

We consider the following stochastic Volterra equation (SVE for short):
\begin{equation}\label{intro_eq_SVE}
	X_t=x(t)+\int^t_0K(t-s)b(X_s)\,\diff s+\int^t_0K(t-s)\sigma(X_s)\,\diff W_s,\ \ t>0,
\end{equation}
associated with a $d$-dimensional Brownian motion $W$. Here, the drift coefficient $b:\bR^n\to\bR^n$, the diffusion coefficient $\sigma:\bR^n\to\bR^{n\times d}$, the kernel $K:(0,\infty)\to[0,\infty)$ and the forcing term $x:(0,\infty)\to\bR^n$ are given. If the kernel $K$ and the forcing term $x$ are constants, say $K(t)=1$ and $x(t)=x_0$ for some $x_0\in\bR^n$, then the SVE \eqref{intro_eq_SVE} becomes a standard stochastic differential equation (SDE for short):
\begin{equation}\label{intro_eq_SDE}
	\begin{dcases}
	\diff X_t=b(X_t)\,\diff t+\sigma(X_t)\,\diff W_t,\ \ t>0,\\
	X_0=x_0.
	\end{dcases}
\end{equation}
More generally, if $K$ is the fractional kernel, i.e.\ $K(t)=\frac{1}{\Gamma(\alpha)}t^{\alpha-1}$ with exponent $\alpha\in(\frac{1}{2},1]$, and if the forcing term $x$ is of the form $x(t)=\frac{1}{\Gamma(\alpha)}t^{a-1}x_0$ for some $x_0\in\bR^n$, then the SVE \eqref{intro_eq_SVE} corresponds to a kind of time-fractional SDE (cf.\ \cite{SaKiMa87}). Here and after, $\Gamma(\alpha)$ denotes the Gamma function. SVEs provide suitable models of dynamics with hereditary properties, memory effects and roughness of the path (cf.\ \cite{BaBeVe11,GrLoSt90,JaRo16}), which cannot be described by standard SDEs. In particular, a rough volatility model in mathematical finance can be modelled by using an SVE of the form \eqref{intro_eq_SVE} with the fractional kernel $K(t)=\frac{1}{\Gamma(\alpha)}t^{\alpha-1}$ with exponent $\alpha$ strictly less than $1$ and a square-root type diffusion coefficient $\sigma$ (cf.\ \cite{EuFuRo18,JaRo16}). This motivates us to study SVEs with singular kernels and non-Lipschitz coefficients; we say that the kernel $K$ is singular if $\lim_{t\downarrow0}K(t)=\infty$, and regular if $\lim_{t\downarrow0}K(t)<\infty$. The analysis of SVEs with singular kernels is much more difficult than that of the regular kernels case and the standard SDEs case since the solutions are no longer Markovian or semimartingales in general. For these reasons, we cannot apply fundamental tools established in the literature of Markov or semimartingale theory to SVEs directly. There are still many important open problems on well-posedness in the setting of singular kernels and non-Lipschitz coefficients.

In this paper, we are interested in \emph{weak well-posedness}, that is, \emph{existence of a weak solution} and \emph{uniqueness in law} for SVEs with completely monotone kernels and non-Lipschitz coefficients. By a weak solution of the SVE \eqref{intro_eq_SVE}, we mean a tuple $(X,W,\Omega,\cF,\bF,\bP)$ consisting of a filtered probability space $(\Omega,\cF,\bF=(\cF_t)_{t\geq0},\bP)$, a $d$-dimensional $\bF$-Brownian motion $W$ and an $\bF$-predictable process $X$ which satisfies \eqref{intro_eq_SVE} in a suitable sense. Uniqueness in law means that the first components $X$ of any weak solutions have the same probability law.

In the context of SDEs, general theory of weak well-posedness has been extensively studied. For example, it is well-known that if the drift coefficient $b$ is measurable and bounded, and if the diffusion coefficient $\sigma$ is continuous, bounded and non-degenerate (in the sense that $\sigma(x)\sigma(x)^\top$ is uniformly elliptic), then the SDE \eqref{intro_eq_SDE} is weakly well-posed, and the weak solution is a time-homogeneous Markov process satisfying the (strong) Feller property; see \cite[Chapter 7]{StVa79}. This means that, although the deterministic ordinary differential equation which corresponds to \eqref{intro_eq_SDE} with $\sigma=0$ may have multiple solutions, adding a non-degenerate noise by a Brownian motion makes the equation well-posed in the sense of uniqueness in law. This phenomenon is called the \emph{regularization-by-noise} effect for SDEs in terms of the Brownian noise; see \cite{Fl11}. From this well-known fact about SDEs, it is natural to ask whether such a regularization-by-noise effect appears for the SVE \eqref{intro_eq_SVE} as well. This question is quite non-trivial since, unlike the SDEs case, many standard tools in stochastic calculus cannot be applied directly to SVEs due to the lack of Markov and semimartingale properties. Nevertheless, in this paper, we provide a positive answer in this direction. Our main result is \cref{main_theo_main} which can be roughly stated as follows:


\begin{result}[Rough statement of \cref{main_theo_main}]
Assume that the kernel $K$ is completely monotone and satisfies $\int^1_0t^{-1/2}K(t)\,\diff t<\infty$. Suppose that the drift coefficient $b$ is uniformly continuous and that the diffusion coefficient $\sigma$ is non-degenerate and uniformly continuous. Then, under some \emph{``balance condition''} between the singularity of $K$ and the modulus of continuity of $\sigma$, the SVE \eqref{intro_eq_SVE} is weakly well-posed.
\end{result}

The ``balance condition'' in the above statement means that the more singular the kernel $K$ is, the more regular the diffusion coefficient $\sigma$ must be, and vice versa. On the one hand, if the diffusion coefficient $\sigma$ is Lipschitz continuous, then the ``balance condition'' in the above statement automatically holds; see \cref{main_cor_Lip}. On the other hand, if $K$ is the fractional kernel $K(t)=\frac{1}{\Gamma(\alpha)}t^{\alpha-1}$ with exponent $\alpha\in(\frac{1}{2},1]$ and if $\sigma$ is $\gamma$-H\"{o}lder continuous with exponent $\gamma\in(\frac{1}{2},1]$, then the ``balance condition'' becomes $\alpha\gamma>\frac{1}{2}$; see \cref{main_cor_fractional}. We note that no additional conditions are assumed on the drift coefficient $b$ other than the uniform continuity. For example, although the (one-dimensional) deterministic fractional Volterra equation $X_t=\frac{1}{\Gamma(\alpha)}\int^t_0(t-s)^{\alpha-1}|X_s|^\beta\mathrm{sign}(X_s)\,\diff s$ with $\alpha\in(\frac{1}{2},1]$ and $\beta\in(0,1)$ has infinitely many solutions, the corresponding SVE with non-degenerate ``Volterra noise'' is weakly well-posed; see \cref{main_exam_regularization}. This seems to be the first explicit example of the regularization-by-noise effect for Volterra equations.

Well-posedness of SVEs with (locally) Lipschitz continuous coefficients has been investigated in many papers; see for example \cite{BeMi80a,BeMi80b} for the classical results of SVEs with sufficiently regular kernels and \cite{AbiJaLaPu19,Ha23,Zh10} for the case of singular kernels. Here, let us briefly review some existing results on existence and/or uniqueness of solutins of SVEs with non-Lipschitz coefficients investigated in the literature so far.
\begin{itemize}
\item
\underline{A slight extension beyond the Lipschitz case.}
Wang \cite{Wa08} considered SVEs with (non-convolution type) singular kernels and coefficients which slightly beyond the Lipschitz case. This is the first paper for well-posedness of SVEs with non-Lipschitz coefficients. However, the assumptions in \cite{Wa08} exclude the case where the coefficient $b$ or $\sigma$ is only H\"{o}lder continuous with the exponent strictly less than $1$.
\item
\underline{Yamada--Watanabe type results for one-dimensional SVEs with the fractional kernel.}
Mytnik and Salisbury \cite{MySa15} considered a one-dimensional (drift-less) SVE with the fractional kernel $K(t)=\frac{1}{\Gamma(\alpha)}t^{\alpha-1}$ with exponent $\alpha\in(\frac{1}{2},1]$ and a $\gamma$-H\"{o}lder continuous diffusion coefficient $\sigma$ with exponent $\gamma\in(\frac{1}{2},1]$. Relying on the special structure of the fractional kernel, they first reformulated the SVE into a stochastic partial differential equation (SPDE for short), and then applied the method of the famous Yamada--Watanabe pathwise uniqueness result for one-dimensional SDEs \cite{YaWa71} to the SPDE. They showed that, under the balance condition $\alpha\gamma>\frac{1}{2}$, pathwise uniqueness holds for the SVE (with the forcing term $x$ of a special form). Pr\"{o}mel and Scheffels \cite{PrSc22b} generalized this result to the time-inhomogeneous setting, allowing for a non-zero (but Lipschitz continuous) drift coefficient $b$. The pathwise uniqueness issue without the balance condition (i.e.\ the case of $\alpha\gamma\leq\frac{1}{2}$) is an open problem at the moment.
\item
\underline{Yamada--Watanabe type results for one-dimensional SVEs with regular kernels.} In the case of sufficiently smooth kernels (which excludes any singular kernels such as the fractional kernel), it has been shown that pathwise uniqueness holds for one-dimensional SVEs with Lipschitz drift coefficients and $\gamma$-H\"{o}lder diffusion coefficients with exponent $\gamma\in[\frac{1}{2},1]$; see \cite[Section 8]{MySa15}, \cite[Proposition B.3]{AbiJaEu19} and \cite{PrSc23}.
\item
\underline{Weak existence for SVEs with continuous coefficients.}
Abi Jaber et al.\ \cite{AbiJaCuLaPu21} showed general weak existence and stability results for SVEs (with jumps) of convolution type under mild regularity assumptions, allowing for non-Lipschitz coefficients and singular kernels. See \cite{PrSc22a} for the study in this direction with time-inhomogeneous coefficients and non-convolution type kernels.
\item
\underline{Uniqueness in law for SVEs with affine coefficients.} Abi Jaber, Larsson and Pulido \cite{AbiJaLaPu19} introduced the so-called affine Volterra process which is defined as the solution of the SVE \eqref{intro_eq_SVE} where both $b(x)$ and $a(x):=\sigma(x)\sigma(x)^\top$ are affine with respect to $x$. By means of the explicit exponential-affine representation formula for the Fourier--Laplace transform, they proved uniqueness in law (in certain state spaces) for some specific examples. A unified theory for SVEs with affine coefficients was introduced by Cuchiero and Teichmann \cite{CuTe19,CuTe20} in terms of lifts of affine Volterra processes to the so-called infinite dimensional generalized Feller processes. See also \cite{AbiJa21} for the study of weak well-posedness of affine SVEs with $L^1$ kernels and jumps.
\end{itemize}
Compared with the aforementioned works, this paper is the first step showing uniqueness in law for SVEs under much mild regularity assumptions, beyond the affine case. It is interesting that, although the derivations are completely different, the ``balance condition'' $\alpha\gamma>\frac{1}{2}$ between the exponent $\alpha$ of the fractional kernel $K$ and the H\"{o}lder exponent $\gamma$ of the diffusion coefficient $\sigma$ appearing in our special case (\cref{main_cor_fractional}) coincides with that of \cite{MySa15} where pathwise uniqueness for one-dimensional SVEs was investigated. In view of general theory on weak well-posedness of SDEs (cf.\ \cite{StVa79}), one may expect that some of assumptions in our results, especially the ``balance condition'', can be somewhat weakened or dropped. However, the ``balance condition'' turns out to be necessary at least in our strategy for proving weak well-posedness. We will discuss about this condition after completing the proof; see \cref{proof-remark-balance}.

Now let us explain our idea to show weak well-posedness of SVEs. As we mentioned before, the analysis of SVEs with singular kernels is very difficult due to the lack of Markov and semimartingale properties. Then, instead of analysing SVEs directly, we consider the associated \emph{Hilbert-valued Markovian lifts} introduced in our previous work \cite{Ha23}. Namely, noting that every completely monotone kernel $K$ can be written as $K(t)=\int_{[0,\infty)}e^{-\theta t}\dmu$ by a unique Radon measure $\mu$ on $[0,\infty)$ (cf.\ \cite[Theorem 1.4]{ScSoVo12}), we reformulate the SVE \eqref{intro_eq_SVE} into the following equation:
\begin{equation}\label{intro_eq_SEE-lift}
	\begin{dcases}
	\diff Y_t(\theta)=-\theta Y_t(\theta)\,\diff t+b(\mu[Y_t])\,\diff t+\sigma(\mu[Y_t])\,\diff W_t,\ \ \theta\in\supp,\ t>0,\\
	Y_0(\theta)=y(\theta),\ \ \theta\in\supp.
	\end{dcases}
\end{equation}
Here, $\supp\subset[0,\infty)$ is the support of the measure $\mu$, and $\mu[y]:=\int_\supp y(\theta)\dmu$ for suitable functions $y:\supp\to\bR^n$. The above equation can be seen as a stochastic evolution equation (SEE for short) on a Gelfand triplet $\cV_\mu\hookrightarrow\cH_\mu\hookrightarrow\cV^*_\mu$ of Hilbert spaces consisting of functions $y:\supp\to\bR^n$; see \cite[Section 2]{Ha23} and \cref{sol} below. Roughly speaking, there is an equivalence between the original SVE \eqref{intro_eq_SVE} and the lifted SEE \eqref{intro_eq_SEE-lift}, and we can apply well-known tools of stochastic calculus in Hilbert spaces to the latter. Our idea is to show weak well-posedness of the lifted SEE \eqref{intro_eq_SEE-lift}, and then to translate it into the original SVE \eqref{intro_eq_SVE}.

Note that, however, there are at least three difficulties for proving weak well-posedness of the lifted SEE \eqref{intro_eq_SEE-lift}. First, the lifted SEE is \emph{highly degenerate} in the sense that the state space $\cH_\mu$ is (typically) infinite dimensional, but the Brownian noise is finite dimensional; even if the diffusion coefficient $\sigma$ is non-degenerate in $\bR^n$, it would be degenerate in the infinite dimensional state space $\cH_\mu$. This indicates that the regularization-by-noise effect for the lifted SEE is highly non-trivial. Second, in the case of singular kernels, the integral map $y\mapsto\mu[y]$ is continuous only in the smaller space $\cV_\mu$ (with stronger topology) and not continuous in the state space $\cH_\mu$, hence the coefficients $y\mapsto b(\mu[y])$ or $y\mapsto\sigma(\mu[y])$ are not continuous in $\cH_\mu$ even if the maps $x\mapsto b(x)$ and $x\mapsto\sigma(x)$ are continuous in $\bR^n$. Third, at this moment, there are no useful criteria of compactness in the space of paths on the Gelfand triplet $\cV_\mu\hookrightarrow\cH_\mu\hookrightarrow\cV^*_\mu$. For these reasons, it is not realistic to use the analytic approach relying on the results of the associated Kolmogorov equations which has been commonly adopted in the study of finite dimensional SDEs (cf.\ \cite{StVa79}). Instead, inspired by the probabilistic approach of Kulik and Scheutzow \cite{KlSc20} for the study of SDEs with delay, we demonstrate the so-called \emph{Control-and-Reimburse (C-n-R)} strategy to prove weak well-posedness of the lifted SEE \eqref{intro_eq_SEE-lift}. The idea of the C-n-R strategy is ``\emph{to apply a stochastic control in order to improve the system, and then to take into account the impact of the control}'' \cite{KlSc20}. We will explain more detailed ideas concerning the C-n-R strategy in the introductory part of \cref{proof}. Using such stochastic control arguments, we overcome the difficulties caused by the non-degeneracy and low regularity of the coefficients of the lifted SEE. Furthermore, besides the weak well-posedness, we show that the weak solution of the lifted SEE becomes a Markov process on the Hilbert space $\cH_\mu$ and satisfies the Feller property. This result indicates that the solution $Y$ of the SEE \eqref{intro_eq_SEE-lift} is a ``Markovian lift'' of the solution $X$ of the original SVE \eqref{intro_eq_SVE}. The same general ideas in the spirit of the C-n-R strategy were also used in the studies of ergodicity of various stochastic systems; see \cite{BuKuSc20,Kl18} and references cited therein. It is worth to remark that a related idea was used to establish the asymptotic log-Harnack inequality for the lifted SEE in our previous work \cite{Ha23}.

The rest of this paper is organized as follows: In \cref{sol}, we briefly review the framework of Hilbert-valued Markovian lifts of SVEs introduced in \cite{Ha23} and then describe the associated solution concepts. In \cref{main}, we state our main result (\cref{main_theo_main}) and show some applications. \cref{proof} is devoted to the proof of \cref{main_theo_main}. In \hyperref[appendix]{Appendix}, we show an auxiliary result needed in the proof of the main result.

\subsection*{Notations}

We fix two natural numbers $n$ and $d$, which represent the dimension of the solutions of SVEs and Brownian motion, respectively, considered in this paper. $\bR^n$ is the Euclidean space of $n$-dimensional vectors, and we denote by $|v|$ and $\langle v_1,v_2\rangle$ the usual Euclidean norm and the inner product for vectors $v,v_1,v_2\in\bR^n$. $\bR^{n\times d}$ is the space of $(n\times d)$-matrices endowed with the Frobenius norm. For each matrix $M\in\bR^{n\times d}$, $M^\top\in\bR^{d\times n}$ denotes the transpose. For two symmetric matrices $M_1,M_2\in\bR^{n\times n}$, $M_1\geq M_2$ means that $M_1-M_2$ is nonnegative definite. $I_{n\times n}\in\bR^{n\times n}$ denotes the identity matrix.

In this paper, we call $(\Omega,\cF,\bF,\bP)$ a filtered probability space if $(\Omega,\cF,\bP)$ is a complete probability space endowed with the filtration $\bF=(\cF_t)_{t\geq0}$ satisfying the usual conditions. For a given $d$-dimensional Brownian motion $W$ on $(\Omega,\cF,\bP)$, we denote by $\bF^W$ the $\bP$-completion of the filtration generated by $W$. $\bE$ denotes the expectation under $\bP$. For each set $A$, $\1_A$ denotes the indicator function.

For a topological space $E$, we denote by $\cB(E)$ the Borel $\sigma$-algebra on $E$. We denote by $\cP(E)$ the set of probability measures on $(E,\cB(E))$. For an $E$-valued random variable $\xi$ on a probability space $(\Omega,\cF,\bP)$, we denote by $\Law_\bP(\xi)\in\cP(E)$ the law of $\xi$ under the probability measure $\bP$. We denote by $\cB_\mathrm{b}(E)$ the space of $\bR$-valued bounded Borel measurable functions on $E$, and by $\cC_\mathrm{b}(E)$ the space of $\bR$-valued bounded continuous functions on $E$.

$C(\bR_+;\bR^d)$ denotes the set of continuous functions from $\bR_+=[0,\infty)$ to $\bR^d$, which is a Polish space with the topology of the uniform convergence on every compact interval. For each $T>0$, $L^2(0,T;\bR^n)$ denotes the separable Hilbert space of equivalent classes of measurable and square-integrable functions $\varphi:[0,T]\to\bR^n$ endowed with the norm $\|\varphi\|_{L^2(0,T)}:=\big(\int^T_0|\varphi(t)|^2\,\diff t\big)^{1/2}$. Also, $L^2_\loc(\bR_+;\bR^n)$ denotes the Polish space of locally square-integrable functions $\varphi:\bR_+\to\bR^n$ endowed with the metric
\begin{equation*}
	d_{L^2_\loc}(\varphi,\bar{\varphi}):=\sum^\infty_{T=1}\frac{1}{2^T}\frac{\|\varphi-\bar{\varphi}\|_{L^2(0,T)}}{1+\|\varphi-\bar{\varphi}\|_{L^2(0,T)}},\ \ \varphi,\bar{\varphi}\in L^2_\loc(\bR_+;\bR^n).
\end{equation*}
For a Borel measure $\mu$ on $[0,\infty)$ and $p\geq1$, $L^p(\mu)$ denotes the separable Banach space of equivalent classes of $\bR^n$-valued measurable functions $\varphi$ on $[0,\infty)$ such that $\int_\supp|\varphi(\theta)|^p\dmu<\infty$ endowed with the norm $\|\varphi\|_{L^p(\mu)}:=\big(\int_\supp|\varphi(\theta)|^p\dmu\big)^{1/p}$. Here, $\supp\subset[0,\infty)$ denotes the support of $\mu$.
 

\section{Solution concepts for SVEs and SEEs}\label{sol}

First, we describe an assumption on the kernel $K$ appearing in the SVE \eqref{intro_eq_SVE}. Recall that a kernel $K:(0,\infty)\to[0,\infty)$ is said to be completely monotone if $K$ is infinitely differentiable and satisfies $(-1)^k\frac{\diff^k}{\diff t^k}K(t)\geq0$ for any $t\in(0,\infty)$ and any nonnegative integers $k$. By Bernstein's theorem (cf.\ \cite[Theorem 1.4]{ScSoVo12}), there exists a unique Radon measure $\mu$ on $[0,\infty)$ such that
\begin{equation*}
	K(t)=\int_{[0,\infty)}e^{-\theta t}\dmu=\int_\supp e^{-\theta t}\dmu,\ \ t>0,
\end{equation*}
where $\supp\subset[0,\infty)$ denotes the support of $\mu$. We say that a completely monotone kernel $K$ is \emph{regular} if $\lim_{t\downarrow0}K(t)<\infty$, and \emph{singular} if $\lim_{t\downarrow0}K(t)=\infty$. It is easy to see that $K$ is regular if and only if $\mu([0,\infty))<\infty$. Concerning the singularity, for each fixed $\alpha\in(0,1]$, $\int^1_0t^{\alpha-1}K(t)\,\diff t<\infty$ if and only if $\int_\supp1\wedge(\theta^{-\alpha})\dmu<\infty$, and these equivalent conditions imply $\int^1_0K(t)^{1/\alpha}\,\diff t<\infty$; see \cite[Lemma 2.1]{Ha23}. Namely, the regularity/singularity of the kernel $K$ (at $t=0$) is characterized by the lightness/heaviness of the tail of the measure $\mu$.


\begin{assum}\label{sol_assum_kernel}
$K:(0,\infty)\to[0,\infty)$ is a completely monotone kernel with the corresponding Radon measure $\mu$ on $[0,\infty)$. Furthermore, there exists a non-increasing Borel measurable function $r$ on $[0,\infty)$ satisfying $1\wedge(\theta^{-1/2})\leq r(\theta)\leq1$ for any $\theta\in[0,\infty)$ such that
\begin{equation*}
	\int_\supp r(\theta)\dmu<\infty.
\end{equation*}
\end{assum}

For a completely monotone kernel $K$, the above assumption is equivalent to $\int^1_0t^{-1/2}K(t)\,\diff t<\infty$. The function $r$ characterizes the regularity/singularity of the kernel $K$. In this paper, if the kernel $K$ is regular, we will take $r\equiv1$. The following are examples of kernels which satisfy \cref{sol_assum_kernel}:
\begin{itemize}
\item
A finite combination of exponential kernel: $K(t)=\sum^k_{i=1}c_ie^{-\theta_it}$ with $k\in\bN$, $c_i>0$ and $\theta_i\in[0,\infty)$ for $i=1,\dots,k$. This kernel is regular, and the corresponding Radon measure is $\mu(\diff\theta)=\sum^k_{i=1}c_i\delta_{\theta_i}(\diff\theta)$ with $\supp=\{\theta_1,\dots,\theta_k\}$. Here, $\delta_{\theta_i}$ denotes the Dirac measure at point $\theta_i$.
\item
The fractional kernel: $K(t)=\frac{1}{\Gamma(\alpha)}t^{\alpha-1}$ with exponent $\alpha\in(\frac{1}{2},1)$. This kernel is singular. The corresponding Radon measure is as follows:
\begin{equation*}
	\mu(\diff\theta)=\frac{1}{\Gamma(\alpha)\Gamma(1-\alpha)}\theta^{-\alpha}\1_{(0,\infty)}(\theta)\,\diff\theta,\ \ \supp=[0,\infty).
\end{equation*}
We can take $r(\theta)=1\wedge(\theta^{-\eta})$ for any $\eta\in(1-\alpha,\frac{1}{2}]$.
\item
The Gamma kernel: $K(t)=\frac{1}{\Gamma(\alpha)}e^{-\beta t}t^{\alpha-1}$ with exponents $\alpha\in(\frac{1}{2},1)$ and $\beta>0$. This kernel is singular. The corresponding Radon measure is as follows:
\begin{equation*}
	\mu(\diff\theta)=\frac{1}{\Gamma(\alpha)\Gamma(1-\alpha)}(\theta-\beta)^{-\alpha}\1_{(\beta,\infty)}(\theta)\,\diff\theta,\ \ \supp=[\beta,\infty).
\end{equation*}
we can take $r(\theta)=1\wedge(\theta^{-\eta})$ for any $\eta\in(1-\alpha,\frac{1}{2}]$.
\end{itemize}

Here, we describe the solution concepts for the SVE \eqref{intro_eq_SVE}.


\begin{defi}
Suppose that \cref{sol_assum_kernel} holds. Let $b:\bR^n\to\bR^n$ and $\sigma:\bR^n\to\bR^{n\times d}$ be measurable maps. Let $x\in L^2_\loc(\bR_+;\bR^n)$ be a given forcing term.
\begin{itemize}
\item
Fix a filtered probability space $(\Omega,\cF,\bF,\bP)$ and a $d$-dimensional $\bF$-Brownian motion $W$. We say that an $\bR^n$-valued process $X$ defined on $(\Omega,\cF,\bF,\bP)$ is a \emph{solution} of the SVE \eqref{intro_eq_SVE} (associated with the Brownian motion $W$ and the forcing term $x$) if $X$ is $\bF$-predictable, the integrability conditions
\begin{equation*}
	\int^T_0|X_t|^2\,\diff t<\infty,\ \int^T_0|b(X_t)|\,\diff t<\infty\ \text{and}\ \int^T_0|\sigma(X_t)|^2\,\diff t<\infty
\end{equation*}
hold a.s.\ for any $T>0$, and the equality in \eqref{intro_eq_SVE} holds a.s.\ for a.e.\ $t>0$.
\item
We say that a tuple $(X,W,\Omega,\cF,\bF,\bP)$ is a \emph{weak solution} of the SVE \eqref{intro_eq_SVE} if $X$ is a solution of \eqref{intro_eq_SVE} associated with a $d$-dimensional $\bF$-Brownian motion $W$ on a filtered probability space $(\Omega,\cF,\bF,\bP)$. We say that \emph{weak existence} holds for the SVE \eqref{intro_eq_SVE} with the forcing term $x$ if there exists a weak solution.
\item
We say that a weak solution $(X,W,\Omega,\cF,\bF,\bP)$ of the SVE \eqref{intro_eq_SVE} is a \emph{strong solution} if $X$ is $\bF^W$-predictable. We say that \emph{strong existence} holds for the SVE \eqref{intro_eq_SVE} with the forcing term $x$ if there exists a strong solution.
\item
We say that \emph{uniqueness in law} holds for the SVE \eqref{intro_eq_SVE} with the forcing term $x$ if, for any two weak solutions $(X^1,W^1,\Omega^1,\cF^1,\bF^1,\bP^1)$ and $(X^2,W^2,\Omega^2,\cF^2,\bF^2,\bP^2)$, it holds that $\Law_{\bP^1}(X^1)=\Law_{\bP^2}(X^2)$, where $X^1$ and $X^2$ are understood as random variables on $L^2_\loc(\bR_+;\bR^n)$.
\item
We say that \emph{pathwise uniqueness} holds for the SVE \eqref{intro_eq_SVE} with the forcing term $x$ if, for any two weak solutions $(X^1,W,\Omega,\cF,\bF^1,\bP)$ and $(X^2,W,\Omega,\cF,\bF^2,\bP)$ (with common $d$-dimensional Brownian motion $W$ relative to possibly different filtrations $\bF^1$ and $\bF^2$ on a common probability space $(\Omega,\cF,\bP)$), it holds that $X^1_t=X^2_t$ for a.e.\ $t>0$ a.s.
\end{itemize}
\end{defi}

%
%
%

As we explained in \hyperref[intro]{Introduction}, instead of studying SVEs directly, we will consider the lifted SEE \eqref{intro_eq_SEE-lift}. Here, we describe the framework of SEEs introduced in \cite{Ha23}. Under \cref{sol_assum_kernel}, for each Borel measurable function $y:\supp\to\bR^n$, define
\begin{equation*}
	\|y\|_{\cH_\mu}:=\Big(\int_\supp r(\theta)|y(\theta)|^2\dmu\Big)^{1/2}
\end{equation*}
and
\begin{equation*}
	\|y\|_{\cV_\mu}:=\Big(\int_\supp (\theta+1)r(\theta)|y(\theta)|^2\dmu\Big)^{1/2}.
\end{equation*}
We denote by $\cH_\mu$ the separable Banach space of equivalent classes of Borel measurable functions $y:\supp\to\bR^n$ such that $\|y\|_{\cH_\mu}<\infty$. Similarly, we denote by $\cV_\mu$ the separable Banach space of equivalent classes of Borel measurable functions $y:\supp\to\bR^n$ such that $\|y\|_{\cV_\mu}<\infty$. Furthermore, we equip $\cH_\mu$ with the natural inner product
\begin{equation*}
	\langle y_1, y_2\rangle_{\cH_\mu}:=\int_\supp r(\theta)\langle y_1(\theta),y_2(\theta)\rangle\dmu,\ \ y_1,y_2\in\cH_\mu,
\end{equation*}
hence $(\cH_\mu,\|\cdot\|_{\cH_\mu},\langle\cdot,\cdot\rangle_{\cH_\mu})$ is a separable Hilbert space. It is easy to see that $\cV_\mu\subset\cH_\mu$ continuously and densely. Denote the continuous dual spaces of $\cH_\mu$ and $\cV_\mu$ by $\cH^*_\mu$ and $\cV^*_\mu$, respectively. By means of the Riesz isomorphism, we identify $\cH_\mu$ with $\cH^*_\mu$. Then we see that $\cH_\mu\subset\cV^*_\mu$ continuously and densely. The duality pairing $\dual{\cdot}{\cdot}$ is then compatible with the inner product on $\cH_\mu$ in the sense that $\dual{y'}{y}=\langle y',y\rangle_{\cH_\mu}$ whenever $y\in\cV_\mu\subset\cH_\mu$ and $y'\in\cH_\mu\subset\cV^*_\mu$. By this constructions, we obtain a Gelfand triplet of Hilbert spaces $\cV_\mu\hookrightarrow\cH_\mu\hookrightarrow\cV^*_\mu$. We will adopt the Hilbert space $\cH_\mu$ (or the Gelfand triplet $\cV_\mu\hookrightarrow\cH_\mu\hookrightarrow\cV^*_\mu$) as the state space of the SEE \eqref{intro_eq_SEE-lift}. By \cite[Lemma 2.5]{Ha23}, we see that the family of maps $e^{-\cdot t}:y\mapsto e^{-\cdot t}y=(\theta\mapsto e^{-\theta t}y(\theta))$, $t\geq0$, is a contraction $C_0$-semigroup on the Hilbert space $\cH_\mu$. Also, we have the following continuous embeddings:
\begin{equation*}
	\bR^n\hookrightarrow\cH_\mu,\ \cV_\mu\hookrightarrow L^1(\mu)\ \text{and}\ \cV_\mu\hookrightarrow L^2(\mu)\hookrightarrow\cH_\mu.
\end{equation*}
Note that if the kernel $K$ is singular, or equivalently $\mu([0,\infty))=\infty$, then any nonzero constant vectors do not belong to $L^1(\mu)$ or $L^2(\mu)$, hence $\cH_\mu\setminus (L^1(\mu)\cup L^2(\mu))\neq\emptyset$. Besides, if $\mu$ is diffusive, then $L^1(\mu)\setminus\cH_\mu\neq\emptyset$. Therefore, the spaces $L^1(\mu)$ and $\cH_\mu$ have no inclusion-relation in general.

In the above framework, the integral map $\mu[\cdot]$ appearing in \eqref{intro_eq_SEE-lift} is defined as follows: for any $y\in\cH_\mu$,
\begin{equation}\label{sol_eq_mu}
	\mu[y]:=
	\begin{dcases}
	\int_\supp y(\theta)\dmu\ &\text{if $y\in L^1(\mu)$},\\
	0\ &\text{if $y\notin L^1(\mu)$}.
	\end{dcases}
\end{equation}
Then, the map $\mu[\cdot]:\cH_\mu\to\bR^n$ is $\cB(\cH_\mu)/\cB(\bR^n)$-measurable, and its restriction $\mu[\cdot]|_{\cV_\mu}$ on $\cV_\mu$ is a bounded linear operator from $\cV_\mu$ to $\bR^n$. The following lemma shows that the above framework is suitable for studying Volterra processes. See \cite[Lemma 2.7]{Ha23} for more detailed assertions and their proofs.


\begin{lemm}[Lemma 2.7 of \cite{Ha23}]
Let \cref{sol_assum_kernel} hold. Fix $T>0$ and a filtered probability space $(\Omega,\cF,\bF,\bP)$ endowed with a $d$-dimensional $\bF$-Brownian motion $W$.
\begin{itemize}
\item[(i)]
Let $b:\Omega\times[0,T]\to\bR^n$ be a $\bF$-predictable process such that $\int^T_0|b_t|\,\diff t<\infty$ a.s. For each $t\in[0,T]$, consider the convolution
\begin{equation*}
	I^1_t:=\int^t_0e^{-\cdot(t-s)}b_s\,\diff s,
\end{equation*}
where the integral is defined as a Bochner integral on $\cH_\mu$. Then $I^1=(I^1_t)_{t\in[0,T]}$ is an $\cH_\mu$-valued continuous $\bF$-adapted process and satisfies $\int^T_0\|I^1_t\|^2_{\cV_\mu}\,\diff t<\infty$ a.s. Furthermore, it holds that
\begin{equation*}
	I^1_t\in\cV_\mu\ \ \text{and}\ \ \mu[I^1_t]=\int^t_0K(t-s)b_s\,\diff s\ \ \text{a.s.\ for a.e.\ $t\in[0,T]$}.
\end{equation*}
\item[(ii)]
Let $\sigma:\Omega\times[0,T]\to\bR^{n\times d}$ be a $\bF$-predictable process such that $\int^T_0|\sigma_t|^2\,\diff t<\infty$ a.s. For each $t\in[0,T]$, consider the stochastic convolution
\begin{equation*}
	I^2_t:=\int^t_0e^{-\cdot(t-s)}\sigma_s\,\diff W_s,
\end{equation*}
where the integral is defined as a stochastic integral on the Hilbert space $\cH_\mu$. Then $I^2=(I^2_t)_{t\in[0,T]}$ is an $\cH_\mu$-valued $\bF$-adapted process, has an $\cH_\mu$-continuous version (again denoted by $I^2$), and satisfies $\int^T_0\|I^2_t\|^2_{\cV_\mu}\,\diff t<\infty$ a.s. Furthermore, it holds that
\begin{equation*}
	I^2_t\in\cV_\mu\ \ \text{and}\ \ \mu[I^2_t]=\int^t_0K(t-s)\sigma_s\,\diff  W_s\ \ \text{a.s.\ for a.e.\ $t\in[0,T]$.}
\end{equation*}
\end{itemize}
\end{lemm}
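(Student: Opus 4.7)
My plan is to reduce both parts of the lemma to pointwise-in-$\theta$ scalar analyses and then aggregate the resulting estimates against the measure $(\theta+1)r(\theta)\,\mu(\diff\theta)$. The reduction is available because the evaluation map $y\mapsto y(\theta)$ commutes with the Bochner integral, and with the finite-dimensional Itô integral in $\sigma\,\diff W$, used to define $I^1_t$ and $I^2_t$ respectively.

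For part (i), the Bochner integral on $\cH_\mu$ is well-defined since $\bR^n\hookrightarrow\cH_\mu$ continuously and $\{e^{-\cdot t}\}_{t\geq 0}$ is a contraction $C_0$-semigroup on $\cH_\mu$, so that $\|e^{-\cdot(t-s)}b_s\|_{\cH_\mu}\leq C|b_s|$ and $L^1$-integrability in $s$ follows from $\int_0^T|b_s|\,\diff s<\infty$ a.s. Continuity in $t$ and $\bF$-adaptedness are standard via approximation of $b$ by elementary $\bF$-predictable processes. The substantive step is the $\cV_\mu$-bound. For fixed $\theta$, the slice $y_\theta(t):=I^1_t(\theta)$ solves the ODE $\dot y_\theta=-\theta y_\theta+b_t$ with $y_\theta(0)=0$; multiplying by $y_\theta$ and integrating over $[0,T]$ yields $\theta\int_0^T|y_\theta(t)|^2\,\diff t\leq M^2$, where $M:=\int_0^T|b_s|\,\diff s$. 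Combined with the trivial uniform bound $|y_\theta(t)|\leq M$, this gives $\int_0^T|y_\theta(t)|^2\,\diff t\leq M^2\min(T,1/\theta)$. The key elementary observation is that $(\theta+1)\min(T,1/\theta)\leq 1+T$ for all $\theta\geq 0$, so Fubini together with $\int_\supp r\,\diff\mu<\infty$ delivers $\int_0^T\|I^1_t\|_{\cV_\mu}^2\,\diff t\leq(1+T)M^2\int_\supp r\,\diff\mu<\infty$ a.s. In particular $I^1_t\in\cV_\mu\hookrightarrow L^1(\mu)$ for a.e.\ $t$, and Fubini applied to $\int_\supp\int_0^t e^{-\theta(t-s)}b_s\,\diff s\,\mu(\diff\theta)$---justified by local integrability of $K$, which follows from Assumption 2.1 since $\int_0^T K(u)\,\diff u=\int_\supp\frac{1-e^{-\theta T}}{\theta}\,\mu(\diff\theta)<\infty$---then produces $\mu[I^1_t]=\int_0^t K(t-s)b_s\,\diff s$.

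Part (ii) proceeds along the same lines with Itô's formula replacing the ODE multiplication. The operator $v\mapsto e^{-\cdot(t-s)}\sigma_s v$ from $\bR^d$ to $\cH_\mu$ has Hilbert--Schmidt norm dominated by $|\sigma_s|\bigl(\int_\supp r\,\diff\mu\bigr)^{1/2}$, so the stochastic integral on $\cH_\mu$ is well-defined and admits a continuous $\bF$-adapted version. For the $\cV_\mu$-estimate, $y_\theta(t):=I^2_t(\theta)$ satisfies the Ornstein--Uhlenbeck-type SDE $\diff y_\theta=-\theta y_\theta\,\diff t+\sigma_t\,\diff W_t$; after localization by $\tau_n:=\inf\{t\geq 0:\int_0^t|\sigma_s|^2\,\diff s\geq n\}$, Itô's formula for $|y_\theta|^2$ and expectation give $2\theta\,\bE\int_0^{T\wedge\tau_n}|y_\theta(s)|^2\,\diff s\leq\bE\int_0^{T\wedge\tau_n}|\sigma_s|^2\,\diff s$, while the Itô isometry provides the complementary pointwise bound $\bE|y_\theta(t)|^2\leq\bE\int_0^{t\wedge\tau_n}|\sigma_s|^2\,\diff s$. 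Aggregating via the same $(\theta+1)\min(T,1/\theta)$-bound as in (i) and sending $n\to\infty$ gives the a.s.\ $\cV_\mu$-integrability, after which the identity $\mu[I^2_t]=\int_0^t K(t-s)\sigma_s\,\diff W_s$ follows from the stochastic Fubini theorem applied with the integrability just established.

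The principal difficulty is the $\cV_\mu$-estimate itself: in the singular-kernel case the measure $(\theta+1)r(\theta)\,\mu(\diff\theta)$ is typically infinite, so no bound uniform in $\theta$ can close the aggregation. The mechanism that saves the day is the $1/\theta$ decay produced by the exponential factor, which exactly balances the $(\theta+1)$ weight and leaves just the $r(\theta)$ factor made integrable by Assumption 2.1. A minor but necessary point in (ii) is the standard stopping-time localization needed to pass from moment bounds to almost-sure bounds.
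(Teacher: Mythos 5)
The paper itself does not prove this lemma; it states it as Lemma 2.7 of the author's earlier work \cite{Ha23} and relies on the proof there. So there is no in-paper proof to compare against, and I will assess your argument on its own merits.

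The core of your argument is sound and, I believe, essentially the same mechanism that makes the lemma true. The reduction to scalar slices $y_\theta(t)$, the energy (ODE / It\^o) estimate producing $\int_0^T|y_\theta|^2\,\diff t\lesssim\min(T,1/\theta)$, and the elementary identity $(\theta+1)\min(T,1/\theta)\leq 1+T$ which reduces the $\cV_\mu$-aggregation to $\int_\supp r\,\diff\mu<\infty$ --- this is exactly the balance the weight $r$ in \cref{sol_assum_kernel} is designed to exploit, and it is correct. Your localization by $\tau_n$ in part (ii) and the observation that a.s.\ finiteness of $\int_0^T\|I^2_t\|_{\cV_\mu}^2\,\diff t$ follows on $\{\tau_n>T\}$ for each $n$ also work.

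Two points deserve to be flagged as genuine gaps in the write-up, rather than just compression. First, you assert that ``the evaluation map $y\mapsto y(\theta)$ commutes with \ldots the finite-dimensional It\^o integral,'' but on $\cH_\mu$ (an $L^2$-type space) pointwise evaluation is not a continuous functional, so this commutation is not automatic. What is true, and is what the paper itself later invokes (see the remark following the definition of mild solutions, citing \cite[Remarks 2.8 and 2.10]{Ha23}), is that the $\cH_\mu$-valued stochastic convolution admits a jointly measurable version $(\omega,t,\theta)\mapsto\tilde Y_t(\theta)$ such that for $\mu$-a.e.\ $\theta$ the slice agrees with the scalar It\^o integral; this is established by approximating $\sigma$ by elementary processes and then passing to the limit in $L^2(\Omega;\cH_\mu)$ together with a measurable-selection argument, and the identification needs to be stated before the slicewise It\^o computation is legitimate. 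Second, the last step of (ii), $\mu[I^2_t]=\int_0^tK(t-s)\sigma_s\,\diff W_s$, is not an immediate application of the stochastic Fubini theorem under ``the integrability just established'': the usual sufficient condition requires $\int_\supp\bigl(\int_0^t e^{-2\theta(t-s)}|\sigma_s|^2\,\diff s\bigr)^{1/2}\dmu<\infty$, and the only uniform-in-$\theta$ bound one gets from $\int_0^t|\sigma_s|^2\,\diff s<\infty$ does not make this integral finite when $\mu([0,\infty))=\infty$. The cleaner route is to first prove the identity with $\mu$ replaced by the finite truncated measure $r_m(\theta)\dmu$ (where stochastic Fubini applies trivially), giving $\int_\supp r_m(\theta)I^2_t(\theta)\dmu=\int_0^tK_m(t-s)\sigma_s\,\diff W_s$ with $K_m(u)=\int_\supp e^{-\theta u}r_m(\theta)\dmu$, and then let $m\to\infty$, using on the left $I^2_t\in\cV_\mu\hookrightarrow L^1(\mu)$ for a.e.\ $t$, and on the right that $K_m\to K$ in $L^2_\loc$ (the same computation appears in the proof of \cref{proof_lemm_weak-existence}). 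With those two repairs the argument is complete.
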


From the above lemma, it is natural to introduce the following path space: for each $T\in(0,\infty)$,
\begin{equation*}
	\Lambda_T:=\left\{y:[0,T]\to\cH_\mu\relmiddle|\text{$y$ is $\cH_\mu$-continuous and satisfies}\ \int^T_0\|y_t\|^2_{\cV_\mu}\,\diff t<\infty\right\}.
\end{equation*}
$\Lambda_T$ is a separable Banach space with the norm $\|\cdot\|_{\Lambda_T}$ defined by
\begin{equation}\label{sol_eq_norm-Lambda}
	\|y\|_{\Lambda_T}:=\Big(\sup_{t\in[0,T]}\|y_t\|^2_{\cH_\mu}+\int^T_0\|y_t\|^2_{\cV_\mu}\,\diff t\Big)^{1/2},\ \ y\in\Lambda_T.
\end{equation}
Also, define
\begin{equation*}
	\Lambda:=\{y:[0,\infty)\to\cH_\mu\,|\,\text{for any $T\in(0,\infty)$, the restriction of $y$ on $[0,T]$ belongs to $\Lambda_T$}\},
\end{equation*}
which is a Polish space endowed with the metric
\begin{equation*}
	d_\Lambda(y,\bar{y}):=\sum_{T\in\bN}\frac{1}{2^T}\frac{\|y-\bar{y}\|_{\Lambda_T}}{1+\|y-\bar{y}\|_{\Lambda_T}},\ \ y,\bar{y}\in\Lambda.
\end{equation*}
Note that, for any $y,y_k\in\Lambda$, $k\in\bN$, $y_k\to y$ in $(\Lambda,d_\Lambda)$ if and only if $y_k\to y$ in $(\Lambda_T,\|\cdot\|_{\Lambda_T})$ for all $T>0$.

Now we describe the solution concepts for the SEE \eqref{intro_eq_SEE-lift}. Here, let us consider a slightly more general SEE of the following form:
\begin{equation}\label{sol_eq_SEE-general}
	\begin{dcases}
	\diff Y_t(\theta)=-\theta Y_t(\theta)\,\diff t+\hat{b}(Y_t)\,\diff t+\hat{\sigma}(Y_t)\,\diff W_t,\ \ \theta\in\supp,\ t>0,\\
	Y_0(\theta)=y(\theta),\ \ \theta\in\supp.
	\end{dcases}
\end{equation}


\begin{defi}
Suppose that \cref{sol_assum_kernel} holds. Let $\hat{b}:\cH_\mu\to\bR^n$ and $\hat{\sigma}:\cH_\mu\to\bR^{n\times d}$ be measurable maps. Let $y\in\cH_\mu$ be a given initial condition.
\begin{itemize}
\item
Fix a filtered probability space $(\Omega,\cF,\bF,\bP)$ and a $d$-dimensional $\bF$-Brownian motion $W$. We say that an $\cH_\mu$-valued process $Y$ defined on $(\Omega,\cF,\bF,\bP)$ is a \emph{mild solution} of the SEE \eqref{sol_eq_SEE-general} (associated with the Brownian motion $W$ and the initial condition $y$) if $Y$ is $\cH_\mu$-continuous and $\bF$-adapted, the integrability conditions
\begin{equation*}
	\int^T_0\|Y_t\|^2_{\cV_\mu}\,\diff t<\infty,\ \int^T_0|\hat{b}(Y_t)|\,\diff t<\infty\ \text{and}\ \int^T_0|\hat{\sigma}(Y_t)|^2\,\diff t<\infty
\end{equation*}
hold a.s.\ for any $T>0$, and the equality
\begin{equation*}
	Y_t(\theta)=e^{-\theta t}y(\theta)+\int^t_0e^{-\theta(t-s)}\hat{b}(Y_s)\,\diff s+\int^t_0e^{-\theta(t-s)}\hat{\sigma}(Y_s)\,\diff W_s
\end{equation*}
holds for $\mu$-a.e.\ $\theta\in\supp$ a.s., for any $t\geq0$.
\item
We say that a tuple $(Y,W,\Omega,\cF,\bF,\bP)$ is a \emph{weak solution} (in the probabilistic sense) of the SEE \eqref{sol_eq_SEE-general} if $Y$ is a mild solution of the SEE \eqref{sol_eq_SEE-general} associated with a $d$-dimensional $\bF$-Brownian motion $W$ on a filtered probability space $(\Omega,\cF,\bF,\bP)$. We say that \emph{weak existence} holds for the SEE \eqref{sol_eq_SEE-general} with the initial condition $y$ if there exists a weak solution.
\item
We say that a weak solution $(Y,W,\Omega,\cF,\bF,\bP)$ of the SEE \eqref{sol_eq_SEE-general} is a \emph{strong solution} (in the probabilistic sense) if $Y$ is $\bF^W$-adapted. We say that \emph{strong existence} holds for the SEE \eqref{sol_eq_SEE-general} with the initial condition $y$ if there exists a strong solution.
\item
We say that \emph{uniqueness in law} holds for the SEE \eqref{sol_eq_SEE-general} with the initial condition $y$ if, for any two weak solutions $(Y^1,W^1,\Omega^1,\cF^1,\bF^1,\bP^1)$ and $(Y^2,W^2,\Omega^2,\cF^2,\bF^2,\bP^2)$, it holds that $\Law_{\bP^1}(Y^1)=\Law_{\bP^2}(Y^2)$, where $Y^1$ and $Y^2$ are understood as random variables on $\Lambda$.
\item
We say that \emph{pathwise uniqueness} holds for the SEE \eqref{sol_eq_SEE-general} with the initial condition $y$ if, for any two weak solutions $(Y^1,W,\Omega,\cF,\bF^1,\bP)$ and $(Y^2,W,\Omega,\cF,\bF^2,\bP)$ (with common $d$-dimensional Brownian motion $W$ relative to possibly different filtrations $\bF^1$ and $\bF^2$ on a common probability space $(\Omega,\cF,\bP)$), it holds that $Y^1_t=Y^2_t$ in $\cH_\mu$ for any $t\geq0$ a.s.
\end{itemize}
\end{defi}


\begin{rem}
The SEE \eqref{sol_eq_SEE-general} can be seen as a parametrized family of finite dimensional SDEs. Indeed, for each weak solution $(Y,W,\Omega,\cF,\bF,\bP)$ of the SEE \eqref{sol_eq_SEE-general}, there exists a jointly measurable map $\Omega\times[0,\infty)\times\supp\ni(\omega,t,\theta)\mapsto\tilde{Y}_t(\theta)(\omega)\in\bR^n$ (with respect to the predictable $\sigma$-algebra on $\Omega\times[0,\infty)$ and the Borel $\sigma$-algebra on $\supp$) such that $Y_t=\tilde{Y}_t$ in $\cH_\mu$ a.s.\ for any $t\geq0$ and that, for each $\theta\in\supp$, the process $\tilde{Y}(\theta)=(\tilde{Y}_t(\theta))_{t\geq0}$ is an $\bR^n$-valued It\^{o} process satisfying
\begin{equation*}
	\begin{dcases}
	\diff \tilde{Y}_t(\theta)=-\theta \tilde{Y}_t(\theta)\,\diff t+\hat{b}(\tilde{Y_t})\,\diff t+\hat{\sigma}(\tilde{Y_t})\,\diff W_t,\ \ t>0,\\
	\tilde{Y}_0(\theta)=y(\theta),
	\end{dcases}
\end{equation*}
in the usual It\^{o}'s sense; see \cite[Remarks 2.8 and 2.10]{Ha23}.
\end{rem}

Define $\cK:\cH_\mu\to L^2_\loc(\bR_+;\bR^n)$ by
\begin{equation*}
	(\cK y)(t):=\mu[e^{-\cdot t}y]=\int_\supp e^{-\theta t}y(\theta)\dmu,\ \ t>0,\ y\in\cH_\mu.
\end{equation*}
$\cK$ is a continuous linear map from $\cH_\mu$ to $L^2_\loc(\bR_+;\bR^n)$; see \cite[Section 2.1]{Ha23}. The following proposition shows that the ``lifted SEE'' \eqref{intro_eq_SEE-lift} with the initial condition $y\in\cH_\mu$ and the SVE \eqref{intro_eq_SVE} with the free term $x=\cK y$ are equivalent in some sense. Recall that the SVE \eqref{intro_eq_SVE} and the lifted SEE \eqref{intro_eq_SEE-lift} are of the following forms:
\begin{equation*}
	X_t=x(t)+\int^t_0K(t-s)b(X_s)\,\diff s+\int^t_0K(t-s)\sigma(X_s)\,\diff W_s,\ \ t>0,
\end{equation*}
and
\begin{equation*}
	\begin{dcases}
	\diff Y_t(\theta)=-\theta Y_t(\theta)\,\diff t+b(\mu[Y_t])\,\diff t+\sigma(\mu[Y_t])\,\diff W_t,\ \ \theta\in\supp,\ t>0,\\
	Y_0(\theta)=y(\theta),\ \ \theta\in\supp,
	\end{dcases}
\end{equation*}
respectively.


\begin{prop}[Theorem 2.11 of \cite{Ha23}]\label{sol_prop_equiv}
Suppose that \cref{sol_assum_kernel} holds. Let $b:\bR^n\to\bR^n$ and $\sigma:\bR^n\to\bR^{n\times d}$ be measurable maps. Fix a filtered probability space $(\Omega,\cF,\bF,\bP)$ and a $d$-dimensional $\bF$-Brownian motion $W$. For each $y\in\cH_\mu$, the following assertions hold:
\begin{itemize}
\item
If $X$ is a solution of the SVE \eqref{intro_eq_SVE} with the forcing term $x=\cK y$, then the $\cH_\mu$-valued process $Y$ defined by
\begin{equation*}
	Y_t:=e^{-\cdot t}y(\cdot)+\int^t_0e^{-\cdot(t-s)}b(X_s)\,\diff s+\int^t_0e^{-\cdot(t-s)}\sigma(X_s)\,\diff W_s,\ \ t\geq0,
\end{equation*}
is a mild solution of the lifted SEE \eqref{intro_eq_SEE-lift} with the initial condition $y$. Furthermore, it holds that
\begin{equation*}
	X_t=\mu[Y_t]
\end{equation*}
a.s.\ for a.e.\ $t>0$.
\item
If $Y$ is a mild solution of the lifted SEE \eqref{intro_eq_SEE-lift} with the initial condition $y$, then the $\bR^n$-valued process $X$ defined by
\begin{equation*}
	X_t:=\mu[Y_t],\ \ t>0,
\end{equation*}
is a solution of the SVE \eqref{intro_eq_SVE} with the forcing term $x=\cK y$. Furthermore, it holds that
\begin{equation*}
	Y_t=e^{-\cdot t}y(\cdot)+\int^t_0e^{-\cdot(t-s)}b(X_s)\,\diff s+\int^t_0e^{-\cdot(t-s)}\sigma(X_s)\,\diff W_s
\end{equation*}
a.s.\ for any $t\geq0$.
\end{itemize}
\end{prop}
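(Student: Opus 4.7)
The plan is to reduce both directions to the preceding Lemma 2.7, which provides exactly the commutation rules between the integral functional $\mu[\cdot]$ and the Bochner/stochastic convolutions against the semigroup $e^{-\cdot t}$. The guiding identity is
\begin{equation*}
	\mu\Big[\int^t_0e^{-\cdot(t-s)}\varphi_s\,\diff s\Big]=\int^t_0K(t-s)\varphi_s\,\diff s,\quad\mu\Big[\int^t_0e^{-\cdot(t-s)}\psi_s\,\diff W_s\Big]=\int^t_0K(t-s)\psi_s\,\diff W_s,
\end{equation*}
valid almost surely for almost every $t$ under the appropriate integrability on $\varphi$ and $\psi$.

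For the first direction, given a solution $X$ of the SVE with forcing term $x=\cK y$, I would define $Y$ by the stated formula and check each requirement of the mild-solution definition in turn. $\cH_\mu$-continuity, $\bF$-adaptedness, and $\int^T_0\|Y_t\|^2_{\cV_\mu}\,\diff t<\infty$ a.s.\ follow directly from Lemma 2.7 applied to $b_s:=b(X_s)$ and $\sigma_s:=\sigma(X_s)$, which are integrable in the required sense because $X$ solves the SVE. Applying $\mu[\cdot]$ to the definition of $Y_t$ and invoking the displayed identities yields $\mu[Y_t]=(\cK y)(t)+\int^t_0K(t-s)b(X_s)\,\diff s+\int^t_0K(t-s)\sigma(X_s)\,\diff W_s=X_t$ a.s.\ for a.e.\ $t>0$. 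Substituting $X_s=\mu[Y_s]$ back into the Bochner and It\^o integrals in the defining formula for $Y_t$ (justified by a Fubini argument on the $\mu\otimes\diff s$-null set) then exhibits $Y$ as a mild solution of the lifted SEE with $\hat b(y)=b(\mu[y])$ and $\hat\sigma(y)=\sigma(\mu[y])$.

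For the second direction, given a mild solution $Y$ of the lifted SEE with initial condition $y$, I would define $X_t:=\mu[Y_t]$. $\bF$-predictability is not automatic since $\mu[\cdot]:\cH_\mu\to\bR^n$ is only Borel measurable, but it follows from the jointly-measurable modification $\tilde Y$ supplied by the Remark following the solution definition, combined with a standard approximation. The integrability bounds $\int^T_0|X_t|^2\,\diff t<\infty$, $\int^T_0|b(X_t)|\,\diff t<\infty$, $\int^T_0|\sigma(X_t)|^2\,\diff t<\infty$ come from $|\mu[y]|\lesssim\|y\|_{\cV_\mu}$ on $\cV_\mu$ together with $\int^T_0\|Y_t\|^2_{\cV_\mu}\,\diff t<\infty$ and the hypothesis $\int^T_0|\hat b(Y_t)|\,\diff t+\int^T_0|\hat\sigma(Y_t)|^2\,\diff t<\infty$. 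Applying $\mu[\cdot]$ to the mild-solution identity for $Y_t$ and using Lemma 2.7 once more produces exactly the SVE \eqref{intro_eq_SVE} with forcing $x=\cK y$. The representation formula for $Y$ in the second bullet is then immediate by re-reading the mild-solution equation with $\mu[Y_s]=X_s$ substituted into the integrands.

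The only genuinely delicate step is the stochastic-integral commutation $\mu\big[\int^t_0e^{-\cdot(t-s)}\sigma_s\,\diff W_s\big]=\int^t_0K(t-s)\sigma_s\,\diff W_s$, which is a stochastic Fubini computation against the (possibly infinite) measure $\mu$ on $[0,\infty)$. This is the sole nontrivial analytic ingredient, and it has already been carried out in Lemma 2.7(ii); everything else in the proof is bookkeeping about measurability, integrability, and the identification $X_s=\mu[Y_s]$. Consequently the proposition is essentially a direct corollary of Lemma 2.7, as recorded in \cite[Theorem 2.11]{Ha23}.
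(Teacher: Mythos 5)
Your reduction to Lemma~2.7, supplemented by the substitution $X_s=\mu[Y_s]$ and measure-zero bookkeeping, is correct and is the expected route; the paper itself does not reprove the result but quotes it from \cite[Theorem~2.11]{Ha23}, for which Lemma~2.7 is indeed the essential ingredient. One small wording slip: the exceptional set on which you replace $b(X_s),\sigma(X_s)$ by $\hat b(Y_s),\hat\sigma(Y_s)$ is a $\diff s\otimes\bP$-null set, not a $\mu\otimes\diff s$-null set, though this does not affect the argument since Bochner and It\^{o} integrals are insensitive to such modifications.
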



\begin{cor}\label{sol_cor_equivalence}
Suppose that \cref{sol_assum_kernel} holds. Let $b:\bR^n\to\bR^n$ and $\sigma:\bR^n\to\bR^{n\times d}$ be measurable maps. Then, for any $y\in\cH_\mu$, the following assertions hold:
\begin{itemize}
\item[(i)]
Weak existence holds for the SVE \eqref{intro_eq_SVE} with the forcing term $x=\cK y$ if and only if weak existence holds for the lifted SEE \eqref{intro_eq_SEE-lift} with the initial condition $y$.
\item[(ii)]
Strong existence holds for the SVE \eqref{intro_eq_SVE} with the forcing term $x=\cK y$ if and only if strong existence holds for the lifted SEE \eqref{intro_eq_SEE-lift} with the initial condition $y$.
\item[(iii)]
Pathwise uniqueness holds for the SVE \eqref{intro_eq_SVE} with the forcing term $x=\cK y$ if and only if pathwise uniqueness holds for the lifted SEE \eqref{intro_eq_SEE-lift} with the initial condition $y$.
\item[(iv)]
If uniqueness in law holds for the lifted SEE \eqref{intro_eq_SEE-lift} with the initial condition $y\in\cH_\mu$, then uniqueness in law holds for the SVE \eqref{intro_eq_SVE} with the forcing term $x=\cK y$.
\end{itemize}
\end{cor}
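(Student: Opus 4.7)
Proof plan: Every assertion of this corollary is a direct translation of the equivalence supplied by \cref{sol_prop_equiv}, which provides two mutually inverse constructions, namely $X\leadsto Y$ via the stochastic convolution $Y_t=e^{-\cdot t}y+\int^t_0e^{-\cdot(t-s)}b(X_s)\,\diff s+\int^t_0e^{-\cdot(t-s)}\sigma(X_s)\,\diff W_s$ and $Y\leadsto X$ via $X_t:=\mu[Y_t]$. Crucially, both constructions operate on a single filtered probability space $(\Omega,\cF,\bF,\bP)$ with a fixed Brownian motion $W$, so the underlying probabilistic data are preserved and for each item only the relevant definitional property has to be verified.

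For (i) the forward and backward implications are immediate: a weak SVE solution on $(\Omega,\cF,\bF,\bP,W)$ yields a weak SEE solution on the same tuple by the first part of \cref{sol_prop_equiv}, and conversely. For (ii), in the direction $X\leadsto Y$, $\bF^W$-predictability of $X$ passes to $b(X_\cdot)$ and $\sigma(X_\cdot)$, whence the Bochner convolution and stochastic convolution defining $Y$ are $\bF^W$-adapted in $\cH_\mu$. In the opposite direction, $Y$ being $\cH_\mu$-continuous and $\bF^W$-adapted makes it predictable as an $\cH_\mu$-valued process, and composition with the Borel-measurable map $\mu[\cdot]:\cH_\mu\to\bR^n$ then produces an $\bF^W$-predictable $\bR^n$-valued process $X$.

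For (iii) I would exploit that pathwise uniqueness concerns two solutions sharing $(\Omega,\cF,\bP,W)$ with possibly distinct filtrations $\bF^1,\bF^2$. Given two SVE solutions $X^1,X^2$ driven by $W$, the lifts $Y^1,Y^2$ are SEE solutions driven by the same $W$ relative to $\bF^1,\bF^2$, and satisfy $X^i_t=\mu[Y^i_t]$ for a.e.\ $t$ a.s.; the convolution formula shows that $Y^i$ is completely determined by $X^i$ and $W$. Therefore pathwise uniqueness for the SEE, giving $Y^1=Y^2$ in $\cH_\mu$ for all $t$ a.s., forces $X^1=X^2$ for a.e.\ $t$; conversely, $X^1=X^2$ for a.e.\ $t$ a.s.\ substituted into the convolution formula gives $Y^1=Y^2$ in $\cH_\mu$ for all $t$ a.s.

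For (iv) the key point is to show that the map $\Phi:\Lambda\to L^2_\loc(\bR_+;\bR^n)$ defined by $\Phi(y)(t):=\mu[y_t]$ is Borel measurable. For each $t$ the evaluation $y\mapsto y_t$ is continuous from $\Lambda$ to $\cH_\mu$, and $\mu[\cdot]:\cH_\mu\to\bR^n$ is Borel measurable, so $y\mapsto\mu[y_t]$ is Borel measurable from $\Lambda$ to $\bR^n$; the continuity of $\mu[\cdot]|_{\cV_\mu}$ combined with $\int^T_0\|y_t\|^2_{\cV_\mu}\,\diff t<\infty$ ensures $\Phi(y)\in L^2_\loc(\bR_+;\bR^n)$, and measurability of $\Phi$ into $L^2_\loc(\bR_+;\bR^n)$ follows by a standard Fubini argument. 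Given two weak SVE solutions with forcing $x=\cK y$, their SEE lifts are weak SEE solutions with the same initial condition $y\in\cH_\mu$; uniqueness in law for the SEE yields equality of laws on $\Lambda$, and pushing forward by $\Phi$ yields equality of laws of $X^1,X^2$ on $L^2_\loc(\bR_+;\bR^n)$. I expect the only routine subtlety to be the measurability of $\Phi$ and the predictability argument in the backward direction of (ii); the rest is bookkeeping on top of \cref{sol_prop_equiv}.
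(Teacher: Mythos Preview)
Your proposal is correct and follows the same route as the paper: assertions (i)--(iii) are immediate from \cref{sol_prop_equiv}, and (iv) proceeds by lifting two weak SVE solutions to SEE solutions, invoking uniqueness in law for the SEE, and pushing forward by the map $\Phi:(y_t)_{t\geq0}\mapsto(\mu[y_t])_{t\geq0}$. The only difference is in how you justify the pushforward step: you argue that $\Phi:\Lambda\to L^2_\loc(\bR_+;\bR^n)$ is Borel measurable via pointwise measurability and a Fubini argument, whereas the paper simply observes that $\Phi$ is a \emph{continuous linear} map, since $\big|\mu[y_t]-\mu[\bar{y}_t]\big|\leq C\|y_t-\bar{y}_t\|_{\cV_\mu}$ for a.e.\ $t$ (using the boundedness of $\mu[\cdot]|_{\cV_\mu}$) and hence $\|\Phi(y)-\Phi(\bar{y})\|_{L^2(0,T)}^2\leq C^2\int_0^T\|y_t-\bar{y}_t\|_{\cV_\mu}^2\,\diff t\leq C^2\|y-\bar{y}\|_{\Lambda_T}^2$. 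This continuity observation renders your measurability discussion unnecessary and slightly streamlines the argument, but your version is correct as written.
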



\begin{proof}
The assertions (i), (ii) and (iii) are trivial from \cref{sol_prop_equiv}. To show the assertion (iv), assume that uniqueness in law holds for the lifted SEE \eqref{intro_eq_SEE-lift} with the initial condition $y\in\cH_\mu$. Let $(X^i,W^i,\Omega^i,\cF^i,\bF^i,\bP^i)$, $i=1,2$, be two weak solutions of the SVE \eqref{intro_eq_SVE} with the forcing term $x=\cK y$. For $i=1,2$, define the $\cH_\mu$-valued process $Y^i$ by
\begin{equation*}
	Y^i_t:=e^{-\cdot t}y(\cdot)+\int^t_0e^{-\cdot(t-s)}b(X^i_s)\,\diff s+\int^t_0e^{-\cdot(t-s)}\sigma(X^i_s)\,\diff W^i_s,\ \ t\geq0.
\end{equation*}
By \cref{sol_prop_equiv}, the tuple $(Y^i,W^i,\Omega^i,\cF^i,\bF^i,\bP^i)$ is a weak solution of the lifted SEE \eqref{intro_eq_SEE-lift} with the initial condition $y\in\cH_\mu$. Furthermore, it holds that $X^i=\mu[Y^i]$ in $L^2_\loc(\bR_+;\bR^n)$ $\bP^i$-a.s. By the assumption, we have $\Law_{\bP^1}(Y^1)=\Law_{\bP^2}(Y^2)$. Noting that the map $(y_t)_{t\geq0}\mapsto(\mu[y_t])_{t\geq0}$ is a continuous linear map from $\Lambda$ to $L^2_\loc(\bR_+;\bR^n)$, we have $\Law_{\bP^1}(X^1)=\Law_{\bP^2}(X^2)$. Thus, uniqueness in law holds for the SVE \eqref{intro_eq_SVE} with the forcing term $x=\cK y$.
\end{proof}


\begin{rem}
The opposite direction of the assertion (iv) of \cref{sol_cor_equivalence} is not trivial. Indeed, even if two weak solutions $(X^i,W^i,\Omega^i,\cF^i,\bF^i,\bP^i)$, $i=1,2$, of the SVE \eqref{intro_eq_SVE} satisfy $\Law_{\bP^1}(X^1)=\Law_{\bP^2}(X^2)$, the stochastic convolutions
\begin{equation*}
	\int^t_0e^{-\cdot(t-s)}b(X^i_s)\,\diff s+\int^t_0e^{-\cdot(t-s)}\sigma(X^i_s)\,\diff W^i_s,\ \ t\geq0,\ i=1,2,
\end{equation*}
might have different laws in the space $\Lambda$. We note that the equivalence in (iv) of \cref{sol_cor_equivalence} would hold true if we replaced the notion of uniqueness in law by \emph{joint} uniqueness in law. Here, joint uniqueness in law means that $\Law_{\bP^1}(X^1,W^1)=\Law_{\bP^2}(X^2,W^2)$ holds for any two weak solutions $(X^i,W^i,\Omega^i,\cF^i,\bF^i,\bP^i)$, $i=1,2$, of the SVE \eqref{intro_eq_SVE} with the same forcing term $x$; similarly we can define it for the lifted SEE \eqref{intro_eq_SEE-lift}. Cherny \cite{Ch03} showed that, in the SDEs case, the two notions of uniqueness in law and joint uniqueness in law are equivalent. Whether such an equivalence of the two notions holds in the SVEs case is an important problem. However, since the assertion (iv) of \cref{sol_cor_equivalence} is sufficient for our purpose in this paper, we leave this problem to the future research.
\end{rem}

By means of the above results, we try to show weak well-posedness of the lifted SEE \eqref{intro_eq_SEE-lift}, and then to translate it into the original SVE \eqref{intro_eq_SVE}.


\section{Main results}\label{main}

Before stating our main results, let us introduce some additional notations.

Let $r$ be the function appearing in \cref{sol_assum_kernel}. For each $m\in[1,\infty)$, define
\begin{equation}\label{main_eq_r_m}
	r_m(\theta):=\frac{r(m\vee\theta)}{r(m)}=
	\begin{dcases}
	1\ &\text{for $\theta\in[0,m]$},\\
	\frac{r(\theta)}{r(m)}\ &\text{for $\theta\in(m,\infty)$},
	\end{dcases}
\end{equation}
and
\begin{equation}\label{main_eq_R-ep}
	R_m:=\int_\supp r_m(\theta)\dmu,\ \ep_m:=\int_\supp(1-r_m(\theta))^2\theta^{-1}r_m(\theta)^{-1}\dmu.
\end{equation}
Since $r$ is non-increasing and $1\wedge(\theta^{-1/2})\leq r(\theta)\leq1$, we have, for $1\leq m\leq M<\infty$,
\begin{equation*}
	1\wedge(\theta^{-1/2})\leq r(\theta)=r_1(\theta)\leq r_m(\theta)\leq r_M(\theta)\leq1
\end{equation*}
for any $\theta\in[0,\infty)$, and
\begin{equation*}
	R_m\leq R_M\leq\frac{1}{r(M)}\int_\supp r(\theta)\dmu<\infty,\ \ep_M\leq\ep_m\leq r(m)\int_{\supp\cap(m,\infty)}r(\theta)\dmu.
\end{equation*}
In particular, by the dominated convergence theorem, we have
\begin{equation}\label{main_eq_R-ep-lim}
	\lim_{m\to\infty}\frac{\ep_m}{r(m)}=0\ \text{and}\ \lim_{m\to\infty}R_m\ep_m=0.
\end{equation}
On the one hand, if the kernel $K$ is regular (or equivalently if $\mu([0,\infty))<\infty$), then we take $r\equiv1$, and we see that $R_m=\mu([0,\infty))<\infty$ and $\ep_m=0$ for any $m\in[1,\infty)$. On the other hand, if the kernel $K$ is singular (or equivalently if $\mu([0,\infty))=\infty$), then $\lim_{m\to\infty}R_m=\infty$ and $\ep_m>0$ for any $m\in[1,\infty)$.

Next, for each uniformly continuous map $\varphi$ defined on $\bR^n$ which takes values in either $\bR^n$ or $\bR^{n\times d}$, we define
\begin{equation*}
	\rho_\varphi(t):=\inf_{(a,b)\in\cA_\varphi}(at+b),\ t\geq0,
\end{equation*}
where
\begin{equation*}
	\cA_\varphi:=\left\{(a,b)\in[0,\infty)\times[0,\infty)\relmiddle||\varphi(x)-\varphi(\bar{x})|^2\leq a|x-\bar{x}|^2+b\ \text{for any $x,\bar{x}\in\bR^n$}\right\}.
\end{equation*}
By the uniform continuity of $\varphi$, it can be easily shown that, for any $\ep>0$, there exists $a_\ep>0$ such that $(a_\ep,\ep)\in\cA_\varphi$. This implies that $\rho_\varphi(0)=0$. Also, we see that $\rho_\varphi:[0,\infty)\to[0,\infty)$ is non-decreasing, concave and satisfies
\begin{equation*}
	|\varphi(x)-\varphi(\bar{x})|^2\leq\rho_\varphi(|x-\bar{x}|^2)
\end{equation*}
for any $x,\bar{x}\in\bR^n$. The function $\rho_\varphi$ can be seen as a modulus of continuity of $\varphi$, which is the minimal choice among all functions satisfying the same properties. That is, if $\rho:[0,\infty)\to[0,\infty)$ is a non-decreasing concave function with $\rho(0)=0$ such that $|\varphi(x)-\varphi(\bar{x})|^2\leq\rho(|x-\bar{x}|^2)$ for any $x,\bar{x}\in\bR^n$, then it holds that $\rho_\varphi(t)\leq\rho(t)$ for any $t\geq0$. The concavity and $\rho_\varphi(0)=0$ imply that $\rho_\varphi$ is sub-additive in the sense that $\rho_\varphi(s+t)\leq\rho_\varphi(s)+\rho_\varphi(t)$ for any $s,t\geq0$, which also implies that $\rho_\varphi(st)\leq(1+s)\rho_\varphi(t)$ for any $s,t\geq0$.

Let $(E,d)$ be a Polish space. We denote the space of probability measures on $E$ (endowed with the Borel $\sigma$-algebra $\cB(E)$) by $\cP(E)$. The \emph{L\'{e}vy--Prokhorov metric} between two probability measures $\nu_1,\nu_2\in\cP(E)$ is defined by
\begin{equation*}
	d_\LP(\nu_1,\nu_2):=\inf\left\{\ep>0\relmiddle|\nu_1(A)\leq\nu_2(A^\ep)+\ep\ \text{and}\ \nu_2(A)\leq\nu_1(A^\ep)+\ep\ \text{for all}\ A\in\cB(E)\right\},
\end{equation*}
where $A^\ep:=\left\{x\in E\relmiddle|\text{there exists $y\in A$ such that}\ d(x,y)<\ep\right\}$. It is well-known that $(\cP(E),d_\LP)$ is a Polish space and that the convergence with respect to $d_\LP$ is equivalent to the weak convergence of probability measures.

Now we are ready to state our main result.


\begin{theo}\label{main_theo_main}
Suppose that \cref{sol_assum_kernel} holds and that $b:\bR^n\to\bR^n$ and $\sigma:\bR^n\to\bR^{n\times d}$ are uniformly continuous. Assume that $\sigma$ satisfies the uniform ellipticity condition; there exists a constant $c_\UE>0$ such that $\sigma(x)\sigma(x)^\top\geq c_\UE I_{n\times n}$ for any $x\in\bR^n$. Furthermore, assume either the following (i) or (ii) holds:
\begin{itemize}
\item[(i)]
The kernel $K$ is singular, and it holds that $\displaystyle\liminf_{m\to\infty}R_m\frac{\rho_\sigma(\ep^2_m)}{\ep_m}=0$.
\item[(ii)]
The kernel $K$ is regular, and it holds that $\displaystyle\liminf_{\delta\downarrow0}\frac{\rho_\sigma(\delta^2)}{\delta}=0$.
\end{itemize}
Then, for the SEE \eqref{intro_eq_SEE-lift}:
\begin{equation*}
	\begin{dcases}
	\diff Y_t(\theta)=-\theta Y_t(\theta)\,\diff t+b(\mu[Y_t])\,\diff t+\sigma(\mu[Y_t])\,\diff W_t,\ \ \theta\in\supp,\ t>0,\\
	Y_0(\theta)=y(\theta),\ \ \theta\in\supp,
	\end{dcases}
\end{equation*}
the following hold:
\begin{itemize}
\item
Weak existence and uniqueness in law hold for any initial condition $y\in\cH_\mu$.
\item
The map $y\mapsto \Law_{\bP^y}(Y^y)$ from $(\cH_\mu,\|\cdot\|_{\cH_\mu})$ to $(\cP(\Lambda),d_\LP)$ is continuous, where $(Y^y,W^y,\Omega^y,\cF^y,\bF^y,\bP^y)$ is a weak solution with the initial condition $y\in\cH_\mu$.
\item
For any weak solution $(Y,W,\Omega,\cF,\bF,\bP)$, the process $Y$ is a time-homogeneous Markov process on $\cH_\mu$, which satisfies the Feller property.
\end{itemize}
In particular, for the SVE \eqref{intro_eq_SVE}:
\begin{equation*}
	X_t=x(t)+\int^t_0K(t-s)b(X_s)\,\diff s+\int^t_0K(t-s)\sigma(X_s)\,\diff W_s,\ \ t>0,
\end{equation*}
the following hold:
\begin{itemize}
\item
Weak existence and uniqueness in law hold for any forcing term $x$ of the form $x=\cK y$ for some $y\in\cH_\mu$.
\item
The map $y\mapsto\Law_{\bP^y}(X^y)$ from $(\cH_\mu,\|\cdot\|_{\cH_\mu})$ to $(\cP(L^2_\loc(\bR_+;\bR^n)),d_\LP)$ is continuous, where, for each $y\in\cH_\mu$, $(X^y,W^y,\Omega^y,\cF^y,\bF^y,\bP^y)$ denotes a weak solution of the SVE with the forcing term $x=\cK y$.
\end{itemize}
\end{theo}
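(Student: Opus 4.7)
My plan is to work entirely at the level of the lifted SEE \eqref{intro_eq_SEE-lift}, since \cref{sol_cor_equivalence} transfers weak existence, uniqueness in law, and continuity in the initial condition directly to the SVE. For weak existence with a fixed $y\in\cH_\mu$, I would mollify $b$ and $\sigma$ into globally Lipschitz $b_k,\sigma_k$ preserving both uniform ellipticity and the modulus $\rho_\sigma$, solve the resulting lifted SEEs by the Banach fixed-point method of \cite{Ha23}, and pass to a limit. Tightness of $(Y^k)$ in $\Lambda_T$ follows from the mild-solution energy estimate $\bE\int^T_0\|Y^k_t\|^2_{\cV_\mu}\,\diff t\le C(T,y)$ obtained from the $C_0$-semigroup $e^{-\cdot t}$ together with It\^o's isometry on $\cH_\mu$, plus a Kolmogorov-type estimate for $\cH_\mu$-time-increments. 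Passing to the limit in the mild formulation (using uniform continuity of $b,\sigma$) produces a weak solution.

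The central point is uniqueness in law together with Feller continuity, both obtained simultaneously via the Control-and-Reimburse strategy. Given any two initial conditions $y,\tilde y\in\cH_\mu$ and a weak solution $(Y,W,\Omega,\cF,\bF,\bP)$ from $y$, I would construct a control $h$ such that the Girsanov-shifted Brownian motion $\widetilde W_t=W_t+\int^t_0 h_s\,\diff s$ makes $Y$ into (the law of) a weak solution from $\tilde y$ under the new measure $\widetilde\bP$. The uniform ellipticity $\sigma\sigma^\top\ge c_\UE I_{n\times n}$ is exactly what allows the choice
\[
h_t=\sigma(\mu[Y_t])^\top\bigl(\sigma(\mu[Y_t])\sigma(\mu[Y_t])^\top\bigr)^{-1}g_t,
\]
where $g_t$ compensates for the mismatch between $y$ and $\tilde y$ through its Laplace transform. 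The L\'evy--Prokhorov distance between the two laws is then controlled by the $L^2$-norm of $h$ via a total variation / Pinsker argument applied to $\Law_\bP(Y)$ and $\Law_{\widetilde\bP}(Y)$.

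The technical heart is the cost estimate for $h$. Because the kernel may be singular, a naive construction of $g$ fails to lie in $L^2$, so the Laplace variable $\theta$ must be split at a truncation level $m\ge 1$: the low-frequency part $\theta\le m$ is controlled through the tempered total mass $R_m$ of $r_m\,\mu$, while the tail $\theta>m$ is only partially corrected, leaving a residual of order $\ep_m$. Since $\sigma$ is merely uniformly continuous, the Girsanov cost of the partial correction is controlled by the modulus $\rho_\sigma$, producing a bound of the form
\[
\bE\int^T_0|h_t|^2\,\diff t\le C(T,y,\tilde y)\Bigl(R_m\frac{\rho_\sigma(\ep_m^2)}{\ep_m}+o_m(1)+\|y-\tilde y\|_{\cH_\mu}^2\Bigr),
\]
and the balance hypothesis $\liminf_{m\to\infty}R_m\rho_\sigma(\ep_m^2)/\ep_m=0$ in case (i) (respectively $\liminf_{\delta\downarrow 0}\rho_\sigma(\delta^2)/\delta=0$ in case (ii)) lets me send $m\to\infty$ (resp.\ $\delta\downarrow 0$) and conclude $d_\LP(\Law_{\bP^y}(Y^y),\Law_{\bP^{\tilde y}}(Y^{\tilde y}))\to 0$ as $\|y-\tilde y\|_{\cH_\mu}\to 0$. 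Taking $y=\tilde y$ gives uniqueness in law; keeping $\|y-\tilde y\|_{\cH_\mu}$ arbitrary gives the Feller continuity. The main obstacle lies precisely here: designing the control $h$ compatibly with the $r_m$-weighted splitting of $\cH_\mu$, verifying Novikov's condition uniformly, and keeping the resulting exponential moment of the Radon--Nikodym derivative bounded while $m$ ranges through the subsequence realizing the $\liminf$.

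Finally, the Markov property is routine: for each $s\ge 0$, the shift $(Y_{s+t})_{t\ge 0}$ is a weak solution from $Y_s$ relative to the shifted Brownian motion, and uniqueness in law identifies its conditional distribution as depending only on $Y_s$; combined with Feller continuity this yields the full statement at the SEE level. The conclusions about the SVE follow from parts (i) and (iv) of \cref{sol_cor_equivalence} together with the continuity of $\cK:\cH_\mu\to L^2_\loc(\bR_+;\bR^n)$ and of the continuous linear map $\Lambda\ni y\mapsto(\mu[y_t])_{t\ge 0}\in L^2_\loc(\bR_+;\bR^n)$, both of which transport the established weak-continuity in $y$ from $\Lambda$ to $L^2_\loc(\bR_+;\bR^n)$.
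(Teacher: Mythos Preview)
Your proposal has the right toolkit (mollification, Girsanov, the $R_m/\ep_m$ splitting) but the C-n-R strategy is pointed at the wrong target, and this creates a genuine gap in the uniqueness argument. You propose to control a weak solution $Y$ started from $y$ so that under a shifted measure it becomes a solution started from $\tilde y$, with $g_t$ compensating the mismatch $y-\tilde y$. But taking $y=\tilde y$ then gives $g\equiv 0$, hence $h\equiv 0$ and $\widetilde{\bP}=\bP$: you learn nothing about whether two weak solutions from the \emph{same} initial condition have the same law. In the paper the C-n-R comparison is not between two initial conditions; it is between an arbitrary weak solution $Y$ of the original SEE and the (unique strong) solution $\bar Y=Y^k$ of a well-posed \emph{approximating} SEE obtained by both mollifying $(b,\sigma)$ \emph{and} replacing $\mu[\cdot]$ by the bounded operator $\mu_{M_k}[\cdot]$ on $\cH_\mu$. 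The control is the feedback drift $\lambda_k\,\mu_{m_k}[Y_t-\hat Y_t]\,\1_{[0,\tau_k]}(t)$ added to the approximating SEE; this drives $\hat Y$ toward $Y$ pathwise (control step), while Girsanov compares $\hat Y$ to $Y^k$ (reimburse step). One concludes that $\Law(Y)$ is the weak limit of $\Law(Y^k)$ and is therefore uniquely determined; Feller continuity and the Markov property then follow from the corresponding properties of the $Y^k$ and the local uniformity of the convergence in $y$.

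A secondary but related issue concerns your weak-existence step. Mollifying $(b,\sigma)$ alone does \emph{not} make $y\mapsto b_k(\mu[y])$ Lipschitz on $\cH_\mu$ in the singular case, since $\mu[\cdot]$ is unbounded there; this is why the paper also truncates to $\mu_{M_k}[\cdot]$. Moreover, tightness of $(Y^k)$ on $\Lambda$ does not follow from an energy bound $\bE\int_0^T\|Y^k_t\|_{\cV_\mu}^2\,\diff t\le C$ plus a Kolmogorov estimate: the path space $\Lambda$ mixes $\cH_\mu$-continuity with $L^2([0,T];\cV_\mu)$-integrability, and no compact embedding is available. The paper sidesteps this by first proving, via the same C-n-R estimates, that $\{\Law(Y^k)\}_k$ is Cauchy in $(\cP(\Lambda),d_{\LP})$, and only then invokes an SVE-level limit argument to identify the limit as a weak solution. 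Your cost heuristic $R_m\rho_\sigma(\ep_m^2)/\ep_m$ is exactly the quantity that governs these C-n-R estimates, but it arises from comparing $Y$ to $Y^k$, not from comparing two initial conditions.
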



\begin{rem}
The pair of conditions (i) and (ii) in the above theorem stands for the ``balance condition'' between the singularity of the kernel $K$ and the modulus of continuity of the diffusion coefficient $\sigma$. This means that the more singular the kernel $K$ is, the more regular the diffusion coefficient $\sigma$ must be, and vice versa; see \cref{main_cor_Holder} below. In particular, $\sigma$ is at least $1/2$-H\"{o}lder continuous in the sense that $\displaystyle\liminf_{\delta\downarrow0}\frac{\rho_\sigma(\delta^2)}{\delta}=0$. In contrast, no additional conditions are assumed on the drift coefficient $b$ other than the uniform continuity.
\end{rem}

As a special case of \cref{main_theo_main}, we shall consider the case of H\"{o}lder continuous diffusion coefficients. We say that a map $\sigma:\bR^n\to\bR^{n\times d}$ satisfies the \emph{finite range H\"{o}lder condition} with exponent $\gamma\in(0,1]$ if there exists a constant $c_\Hol\in(0,\infty)$ such that
\begin{equation*}
	|\sigma(x)-\sigma(\bar{x})|\leq c_\Hol|x-\bar{x}|^\gamma
\end{equation*}
for any $x,\bar{x}\in\bR^n$ satisfying $|x-\bar{x}|\leq1$. This condition implies that
\begin{equation*}
	|\sigma(x)-\sigma(\bar{x})|\leq c_\Hol\big(|x-\bar{x}|^\gamma+|x-\bar{x}|\big)
\end{equation*}
for any $x,\bar{x}\in\bR^n$ satisfying $|x-\bar{x}|>1$. Indeed, taking $p\in\bN$ such that $|x-\bar{x}|\in(p,p+1]$, we have
\begin{align*}
	|\sigma(x)-\sigma(\bar{x})|&\leq\sum^{p+1}_{q=1}\Big|\sigma\Big(\bar{x}+\frac{q}{p+1}(x-\bar{x})\Big)-\sigma\Big(\bar{x}+\frac{q-1}{p+1}(x-\bar{x})\Big)\Big|\\
	&\leq c_\Hol(p+1)\Big(\frac{|x-\bar{x}|}{p+1}\Big)^\gamma\\
	&\leq c_\Hol\big(|x-\bar{x}|^\gamma+|x-\bar{x}|\big).
\end{align*}
Therefore, $|\sigma(x)-\sigma(\bar{x})|^2\leq\rho\big(|x-\bar{x}|^2\big)$ for any $x,\bar{x}\in\bR^n$, where $\rho(t):=2c^2_\Hol\big(t^\gamma+t\big)$, $t\geq0$, which is a concave function on $[0,\infty)$. This implies that $\rho_\sigma\leq\rho$, and hence $\limsup_{t\downarrow0}\frac{\rho_\sigma(t)}{t^\gamma}<\infty$.


\begin{cor}\label{main_cor_Holder}
Suppose that the kernel $K$ is completely monotone and satisfies $\int^1_0t^{\eta-1}K(t)\,\diff t<\infty$ for some $\eta\in(0,\frac{1}{2}]$. Let $b:\bR^n\to\bR^n$ be a uniformly continuous map. Suppose that $\sigma:\bR^n\to\bR^{n\times d}$ satisfies the uniform ellipticity condition and the finite range H\"{o}lder condition with exponent $\gamma=\frac{1}{2(1-\eta)}\in(\frac{1}{2},1]$. Then, for any free term $x$ of the form $x(t)=K(t)x_0$ for any $t>0$ for some $x_0\in\bR^n$, weak existence and uniqueness in law hold for the SVE \eqref{intro_eq_SVE}.
\end{cor}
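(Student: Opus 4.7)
The plan is to apply \cref{main_theo_main} directly: its ``In particular'' conclusion gives weak existence and uniqueness in law for the SVE once its hypotheses are verified for a forcing term of the form $x=\cK y$. Two preliminary observations match the present set-up to the theorem. First, identifying $\bR^n$ with the subspace of constant functions in $\cH_\mu$ (which is contained in $\cH_\mu$ since $\|x_0\|_{\cH_\mu}^2=|x_0|^2\int_\supp r\,\diff\mu<\infty$), the choice $y\equiv x_0$ yields $(\cK y)(t)=x_0\int_\supp e^{-\theta t}\dmu=K(t)x_0$, so the prescribed forcing term is admissible. Second, since $\eta\le\tfrac12$ gives $t^{\eta-1}\ge t^{-1/2}$ on $(0,1)$, the hypothesis $\int_0^1 t^{\eta-1}K(t)\,\diff t<\infty$ implies $\int_0^1 t^{-1/2}K(t)\,\diff t<\infty$, and by the equivalence recalled in \cref{sol} one may take $r(\theta):=1\wedge\theta^{-\eta}$ in \cref{sol_assum_kernel}. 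The regularity hypotheses on $b$ and $\sigma$ in \cref{main_theo_main} are immediate from the present assumptions, and the finite-range $\gamma$-Hölder condition gives $\rho_\sigma(\delta^2)\le C\delta^{2\gamma}$ for $\delta\le1$ (as in the discussion preceding the corollary).

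It then remains to verify the balance condition (i) or (ii) in \cref{main_theo_main}. In the regular case this is immediate: since $\gamma>\tfrac12$, $\rho_\sigma(\delta^2)/\delta\le C\delta^{2\gamma-1}\to0$ as $\delta\downarrow0$, yielding (ii). The singular case requires a direct computation of $R_m$ and $\ep_m$ for the chosen $r$. One finds $r_m(\theta)=m^\eta\theta^{-\eta}$ for $\theta>m\ge1$, from which elementary bounds give $R_m=O(m^\eta)$ and, using $\theta^{\eta-1}\le m^{2\eta-1}\theta^{-\eta}$ for $\theta\ge m$ (valid because $2\eta-1\le0$), $\ep_m\le m^{\eta-1}f(m)$ with $f(m):=\int_m^\infty\theta^{-\eta}\,\diff\mu\to0$. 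The key point is that with the precise choice $\gamma=1/(2(1-\eta))$, the exponent $2\gamma-1$ equals $\eta/(1-\eta)$, so
\begin{equation*}
R_m\,\frac{\rho_\sigma(\ep_m^2)}{\ep_m}\le CR_m\ep_m^{2\gamma-1}\le Cm^\eta\bigl(m^{\eta-1}f(m)\bigr)^{\eta/(1-\eta)}=Cf(m)^{\eta/(1-\eta)}\longrightarrow0,
\end{equation*}
verifying (i).

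The main technical point of the argument, and the obstacle that pins down the exponent $\gamma$, is precisely this cancellation in the singular case: the $m^\eta$-growth of $R_m$ must be absorbed by the $m^{-\eta}$-decay arising from $\ep_m^{2\gamma-1}$, and this forces the relation $\gamma=1/(2(1-\eta))$. It is also essential to match $r$ to the regularity index $\eta$ of the kernel rather than to use the generic lower bound $1\wedge\theta^{-1/2}$; otherwise the crucial cancellation of $m$-powers breaks down when $\eta<\tfrac12$.
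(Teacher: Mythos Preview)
Your proof is correct and follows essentially the same approach as the paper: both verify the balance condition of \cref{main_theo_main} by taking $r(\theta)=1\wedge\theta^{-\eta}$, bounding $\ep_m\le m^{\eta-1}\int_{(m,\infty)}r\,\diff\mu$ via the inequality $\theta^{\eta-1}\le m^{2\eta-1}\theta^{-\eta}$ for $\theta\ge m$, and then observing that the choice $\gamma=\frac{1}{2(1-\eta)}$ makes the power of $m$ in $R_m\ep_m^{2\gamma-1}$ cancel exactly. Your write-up additionally spells out that $x(t)=K(t)x_0$ arises as $\cK y$ for the constant $y\equiv x_0\in\cH_\mu$, which the paper leaves implicit.
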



\begin{proof}
We check the conditions (i) and (ii) in \cref{main_theo_main}. If the kernel $K$ is regular, then we have $\lim_{\delta\downarrow0}\frac{\rho_\sigma(\delta^2)}{\delta}=0$ since $\limsup_{t\downarrow0}\frac{\rho_\sigma(t)}{t^\gamma}<\infty$ and $\gamma>\frac{1}{2}$, and thus the condition (ii) in \cref{main_theo_main} holds. Assume that $K$ is singular. By the assumption and \cite[Lemma 2.1]{Ha23}, we can and will take $r(\theta)=1\wedge(\theta^{-\eta})$ as the function $r$ appearing in \cref{sol_assum_kernel}. Let $m\in[1,\infty)$ be fixed. Observe that $\theta^{-1}r_m(\theta)^{-1}=m^{-\eta}\theta^{\eta-1}\leq r(m)^{\frac{1}{\eta}-1}r(\theta)$ for any $\theta\in(m,\infty)$. Thus, we have
\begin{equation*}
	\ep_m=\int_\supp(1-r_m(\theta))^2\theta^{-1}r_m(\theta)^{-1}\dmu\leq r(m)^{\frac{1}{\eta}-1}\int_{\supp\cap(m,\infty)}r(\theta)\dmu
\end{equation*}
and
\begin{align*}
	R_m\ep^{2\gamma-1}_m&\leq r(m)^{(\frac{1}{\eta}-1)(2\gamma-1)-1}\Big(\int_{\supp\cap(m,\infty)}r(\theta)\dmu\Big)^{2\gamma-1}\Big(\int_\supp r(\theta)\dmu\Big)\\
	&=\Big(\int_{\supp\cap(m,\infty)}r(\theta)\dmu\Big)^{2\gamma-1}\Big(\int_\supp r(\theta)\dmu\Big).
\end{align*}
Noting that $\int_\supp r(\theta)\dmu<\infty$, by the dominated convergence theorem, we get $\lim_{m\to\infty}R_m\ep^{2\gamma-1}_m=0$. Since $\limsup_{t\downarrow0}\frac{\rho_\sigma(t)}{t^\gamma}<\infty$, the condition (i) in \cref{main_theo_main} holds.
\end{proof}


\begin{cor}\label{main_cor_Lip}
Suppose that the kernel $K$ is completely monotone and satisfies $\int^1_0t^{-1/2}K(t)\,\diff t<\infty$. Let $b:\bR^n\to\bR^n$ be a uniformly continuous map. Assume that $\sigma:\bR^n\to\bR^{n\times d}$ is globally Lipschitz continuous and satisfies the uniform ellipticity condition. Then, for any free term $x$ of the form $x(t)=K(t)x_0$ for any $t>0$ for some $x_0\in\bR^n$, weak existence and uniqueness in law hold for the SVE \eqref{intro_eq_SVE}.
\end{cor}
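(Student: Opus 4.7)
The plan is to simply verify the hypotheses of \cref{main_theo_main} under the assumptions of the corollary and apply it directly. Since $\sigma$ is globally Lipschitz with some constant $L>0$, we have $|\sigma(x)-\sigma(\bar x)|^2 \leq L^2|x-\bar x|^2$ for all $x,\bar x \in \bR^n$, so by the minimality of $\rho_\sigma$ discussed in \cref{main}, $\rho_\sigma(t) \leq L^2 t$ for all $t\geq 0$. The uniform continuity of $\sigma$ (which is needed in \cref{main_theo_main}) is immediate from the global Lipschitz property, and uniform ellipticity is assumed. Also, by the remark just after \cref{sol_assum_kernel}, the condition $\int^1_0 t^{-1/2}K(t)\,\diff t<\infty$ is equivalent to \cref{sol_assum_kernel}.

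Next I would verify the balance condition, splitting into the two cases (i) and (ii) of \cref{main_theo_main}. In the regular case (ii), using $\rho_\sigma(\delta^2)\leq L^2\delta^2$, we have $\rho_\sigma(\delta^2)/\delta \leq L^2\delta \to 0$ as $\delta\downarrow 0$, so (ii) holds. In the singular case (i), the same bound gives
\begin{equation*}
R_m\frac{\rho_\sigma(\ep_m^2)}{\ep_m} \leq L^2 R_m \ep_m,
\end{equation*}
and the second limit in \eqref{main_eq_R-ep-lim} yields $\liminf_{m\to\infty}R_m\rho_\sigma(\ep_m^2)/\ep_m = 0$, so (i) holds.

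Finally, to translate the result to the SVE with forcing term $x(t)=K(t)x_0$, I would take $y=x_0\in\bR^n$ and view it as a constant function in $\cH_\mu$ via the continuous embedding $\bR^n\hookrightarrow\cH_\mu$ (which is valid under \cref{sol_assum_kernel}). Then for any $t>0$,
\begin{equation*}
(\cK y)(t) = \mu[e^{-\cdot t}y] = \int_\supp e^{-\theta t}x_0\dmu = K(t)x_0 = x(t),
\end{equation*}
so the forcing term $x$ is of the required form $x=\cK y$. Applying the SVE part of \cref{main_theo_main} with this $y$ then yields weak existence and uniqueness in law for the SVE \eqref{intro_eq_SVE}.

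There is no real obstacle here; the corollary is an immediate consequence of \cref{main_theo_main} once one observes that the Lipschitz bound on $\sigma$ makes the balance condition automatic via $R_m\ep_m\to 0$, and that the specific forcing term $x(t)=K(t)x_0$ is exactly what the lift of a constant initial condition produces under $\cK$.
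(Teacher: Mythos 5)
Your proposal is correct and matches the intended argument: the paper leaves \cref{main_cor_Lip} without a separate proof because it is the $\eta=\frac{1}{2}$, $\gamma=1$ case of \cref{main_cor_Holder}, whose proof likewise reduces to checking the balance condition of \cref{main_theo_main}. Your direct verification is a clean shortcut: since global Lipschitz gives $\rho_\sigma(t)\leq L^2 t$, the singular balance condition becomes $R_m\rho_\sigma(\ep_m^2)/\ep_m\leq L^2 R_m\ep_m\to 0$, which is exactly the second limit in \eqref{main_eq_R-ep-lim} and holds for any admissible $r$; the paper's proof of \cref{main_cor_Holder} instead fixes $r(\theta)=1\wedge\theta^{-\eta}$ and shows $R_m\ep_m^{2\gamma-1}\to 0$, which for $\gamma=1$ collapses to the same fact. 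Both routes are sound; yours avoids re-deriving the $R_m\ep_m^{2\gamma-1}$ estimate and the explicit choice of $r$. The identification of the forcing term $x(t)=K(t)x_0$ as $\cK y$ with $y=x_0$ a constant function, via the embedding $\bR^n\hookrightarrow\cH_\mu$, is also exactly as intended.
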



\begin{cor}\label{main_cor_fractional}
Consider the fractional kernel $K(t)=\frac{1}{\Gamma(\alpha)}t^{\alpha-1}$ with exponent $\alpha\in(\frac{1}{2},1]$. Let $b:\bR^n\to\bR^n$ be a uniformly continuous map. Assume that $\sigma:\bR^n\to\bR^{n\times d}$ satisfies the uniform ellipticity condition and the finite range H\"{o}lder condition with exponent $\gamma\in(\frac{1}{2},1]$. Furthermore, assume that the \emph{balance condition} $\alpha\gamma>\frac{1}{2}$ holds. Then, for any free term $x$ of the form $x(t)=\frac{1}{\Gamma(\alpha)}t^{\alpha-1}x_0$ for any $t>0$ for some $x_0\in\bR^n$, weak existence and uniqueness in law hold for the SVE \eqref{intro_eq_SVE}.
\end{cor}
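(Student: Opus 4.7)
The plan is to deduce \cref{main_cor_fractional} directly from \cref{main_cor_Holder} by choosing the auxiliary parameter $\eta$ appropriately for the fractional kernel. Because the balance condition $\alpha\gamma>\frac{1}{2}$ is precisely what makes this choice possible, the argument is a short verification; all substantive analytic work has already been accomplished in \cref{main_theo_main} and \cref{main_cor_Holder}.

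First I would recall, as noted in the list of examples following \cref{sol_assum_kernel}, that for every $\alpha\in(\frac{1}{2},1]$ the fractional kernel $K(t)=\frac{1}{\Gamma(\alpha)}t^{\alpha-1}$ is completely monotone, and an elementary computation shows that $\int^1_0 t^{\eta-1}K(t)\,\diff t<\infty$ if and only if $\eta>1-\alpha$. Setting $\eta:=1-\frac{1}{2\gamma}$, the hypothesis $\gamma\in(\frac{1}{2},1]$ gives $\eta\in(0,\frac{1}{2}]$, while the balance condition $\alpha\gamma>\frac{1}{2}$ is equivalent to $\frac{1}{2\gamma}<\alpha$, i.e.\ to $\eta>1-\alpha$. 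Hence the integrability hypothesis of \cref{main_cor_Holder} is satisfied, and this $\eta$ is calibrated precisely so that $\frac{1}{2(1-\eta)}=\gamma$, which means that the Hölder exponent demanded by \cref{main_cor_Holder} matches the exponent provided by hypothesis.

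For the forcing term, taking the constant function $y\equiv x_0\in\bR^n\hookrightarrow\cH_\mu$ (recalling that $\bR^n$ embeds continuously into $\cH_\mu$), we have
\begin{equation*}
    (\cK y)(t)=\int_\supp e^{-\theta t}x_0\dmu=K(t)x_0=\frac{1}{\Gamma(\alpha)}t^{\alpha-1}x_0,
\end{equation*}
which coincides with the prescribed $x(t)$. Applying \cref{main_cor_Holder} therefore yields weak existence and uniqueness in law for the SVE \eqref{intro_eq_SVE} with this forcing term. There is no analytic obstacle; the only conceptual point is to recognize that the balance condition $\alpha\gamma>\frac{1}{2}$ is exactly what ensures the existence of an admissible $\eta$ in the interval $(1-\alpha,\frac{1}{2}]$ for which the Hölder exponent $\frac{1}{2(1-\eta)}$ demanded by \cref{main_cor_Holder} does not exceed the Hölder exponent of $\sigma$.
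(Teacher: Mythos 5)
Your proposal is correct and follows the natural route implicit in the paper, which states this corollary without a displayed proof: reduce to \cref{main_cor_Holder} by taking $\eta:=1-\frac{1}{2\gamma}\in(0,\frac{1}{2}]$, for which the integrability condition $\int^1_0 t^{\eta-1}K(t)\,\diff t<\infty$ (equivalently $\eta>1-\alpha$) is precisely the balance condition $\alpha\gamma>\frac{1}{2}$, and the demanded H\"{o}lder exponent $\frac{1}{2(1-\eta)}$ equals $\gamma$. The forcing-term identification $(\cK x_0)(t)=K(t)x_0$ for the constant initial datum $x_0\in\bR^n\hookrightarrow\cH_\mu$ is also checked correctly.
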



\begin{rem}
Consider the fractional kernel $K(t)=\frac{1}{\Gamma(\alpha)}t^{\alpha-1}$ with exponent $\alpha\in(\frac{1}{2},1)$. Note that the corresponding Radon measure is given by $\mu(\diff\theta)=\frac{1}{\Gamma(\alpha)\Gamma(1-\alpha)}\theta^{-\alpha}\1_{(0,\infty)}\,\diff\theta$. As the function $r$ in \cref{sol_assum_kernel} we can take $r(\theta)=1\wedge(\theta^{-\eta})$ for any $\eta\in(1-\alpha,\frac{1}{2}]$. In this case, for each $m\in[1,\infty)$, the numbers $R_m$ and $\ep_m$ can be calculated explicitly as follows:
\begin{align*}
	R_m&=\int_\supp r_m(\theta)\dmu\\
	&=\frac{1}{\Gamma(\alpha)\Gamma(1-\alpha)}\int^m_0\theta^{-\alpha}\,\diff\theta+\frac{1}{\Gamma(\alpha)\Gamma(1-\alpha)}m^\eta\int^\infty_m\theta^{-\eta-\alpha}\,\diff\theta\\
	&=\frac{1}{\Gamma(\alpha)\Gamma(1-\alpha)}\cdot\frac{\eta}{(1-\alpha)(\alpha+\eta-1)}m^{1-\alpha}
\end{align*}
and
\begin{align*}
	\ep_m&=\int_\supp(1-r_m(\theta))^2\theta^{-1}r_m(\theta)^{-1}\dmu\\
	&=\frac{1}{\Gamma(\alpha)\Gamma(1-\alpha)}m^{-\eta}\int^\infty_m(1-m^\eta\theta^{-\eta})^2\theta^{-1+\eta-\alpha}\,\diff\theta\\
	&=\frac{1}{\Gamma(\alpha)\Gamma(1-\alpha)}\cdot\frac{2\eta^2}{\alpha(\alpha-\eta)(\alpha+\eta)}m^{-\alpha}.
\end{align*}
\end{rem}


\begin{exam}\label{main_exam_regularization}
Consider the following (one-dimensional) deterministic Volterra equation with the fractional kernel:
\begin{equation}\label{main_eq_deterministic}
	X_t=\frac{1}{\Gamma(\alpha)}\int^t_0(t-s)^{\alpha-1}|X_s|^\beta\mathrm{sign}(X_s)\,\diff s,\ t>0,
\end{equation}
with $\alpha\in(0,1]$ and $\beta\in(0,1)$. Obviously, the function $X_t=0$, $t>0$, is a solution of the above Volterra equation. Besides, for any $t_0\geq0$, the functions
\begin{equation*}
	X_t=\pm C_{\alpha,\beta}(t-t_0)^{\frac{\alpha}{1-\beta}}\1_{(t_0,\infty)}(t),\ t>0,
\end{equation*}
with the constant $C_{\alpha,\beta}=\Big(\beta\frac{\Gamma(\frac{\alpha\beta}{1-\beta})}{\Gamma(\frac{\alpha}{1-\beta})}\Big)^{\frac{1}{1-\beta}}$, are also solutions. Indeed, for $t>t_0$,
\begin{align*}
	\frac{1}{\Gamma(\alpha)}\int^t_0(t-s)^{\alpha-1}|X_s|^\beta\mathrm{sign}(X_s)\,\diff s&=\pm\frac{C^\beta_{\alpha,\beta}}{\Gamma(\alpha)}\int^t_{t_0}(t-s)^{\alpha-1}(s-t_0)^{\frac{\alpha\beta}{1-\beta}}\,\diff s\\
	&=\pm\frac{C^\beta_{\alpha,\beta}}{\Gamma(\alpha)}\int^{t-t_0}_0(t-t_0-s)^{\alpha-1}s^{\frac{\alpha\beta}{1-\beta}}\,\diff s\\
	&=\pm\frac{C^\beta_{\alpha,\beta}}{\Gamma(\alpha)}B\Big(\alpha,\frac{\alpha\beta}{1-\beta}+1\Big)(t-t_0)^{\alpha+\frac{\alpha\beta}{1-\beta}}\\
	&=\pm C_{\alpha,\beta}(t-t_0)^{\frac{\alpha}{1-\beta}}\\
	&=X_t,
\end{align*}
where $B(x,y)=\int^1_0(1-s)^{x-1}s^{y-1}\,\diff s$, $x,y>0$, denotes the Beta function; we used the relations $B(x,y)=\frac{\Gamma(x)\Gamma(y)}{\Gamma(x+y)}$ and $\Gamma(x+1)=x\Gamma(x)$ for $x,y>0$. Thus, the deterministic Volterra equation \eqref{main_eq_deterministic} has infinitely many solutions. In contrast, if $\alpha\in(\frac{1}{2},1]$, \cref{main_cor_fractional} shows that weak existence and uniqueness in law hold for the corresponding SVE
\begin{equation*}
	X_t=\frac{1}{\Gamma(\alpha)}\int^t_0(t-s)^{\alpha-1}|X_s|^\beta\mathrm{sign}(X_s)\,\diff s+\frac{1}{\Gamma(\alpha)}\int^t_0(t-s)^{\alpha-1}\sigma(X_s)\,\diff W_s,\ \ t>0,
\end{equation*}
whenever the diffusion coefficient $\sigma:\bR\to\bR$ is non-degenerate and satisfies the finite range H\"{o}lder condition with exponent $\gamma\in(\frac{1}{2\alpha},1]$. This reveals the regularization-by-noise effect for SVEs, allowing for multiplicative noise with H\"{o}lder coefficients.
\end{exam}


\begin{rem}
Abi Jaber et al.\ \cite{AbiJaCuLaPu21} proved weak existence of SVEs (with jumps) under some mild assumptions. In their main result \cite[Theorem 1.2]{AbiJaCuLaPu21}, they assumed that the kernel $K$ satisfies
\begin{equation}\label{main_eq_Abi-Jaber}
	\int^T_0\frac{K(t)^p}{t^{\eta p}}\,\diff t+\int^T_0\int^T_0\frac{|K(t)-K(s)|^p}{|t-s|^{1+\eta p}}\,\diff s\,\diff t<\infty
\end{equation}
for any $T>0$ for some $p\in[2,\infty)$ and $\eta\in(0,1)$. This assumption was used to show a priori estimates on Sobolev--Slobodeckij norms of weak solutions of SVEs, which was crucial for the tightness argument; see \cite[Theorem 1.4 and Corollary 1.5]{AbiJaCuLaPu21}. Observe that if $\int^1_0\frac{K(t)^p}{t^{\eta p}}\,\diff t<\infty$, by the H\"{o}lder inequality one has
\begin{equation*}
	\int^1_0t^{\alpha-1}K(t)\,\diff t\leq\Big(\int^1_0t^{\frac{p}{p-1}(\alpha+\eta-1)}\,\diff t\Big)^{\frac{p-1}{p}}\Big(\int^1_0\frac{K(t)^p}{t^{\eta p}}\,\diff t\Big)^{\frac{1}{p}}<\infty
\end{equation*}
for any $\alpha\in(\frac{1}{p}-\eta,\frac{1}{2})$. If the kernel $K$ is completely monotone with the corresponding Radon measure $\mu$, then $\int^1_0t^{\alpha-1}K(t)\,\diff t<\infty$ is equivalent to $\int_\supp1\wedge(\theta^{-\alpha})\dmu<\infty$; see \cite[Lemma 2.1]{Ha23}. Thus, we cannot apply \cite[Theorem 1.2]{AbiJaCuLaPu21} to show weak existence for an SVE under \cref{sol_assum_kernel}. Indeed, for example, the completely monotone kernel $K(t)=\int^\infty_2\frac{e^{-\theta t}}{\theta^{1/2}(\log\theta)^2}\,\diff\theta$ with the corresponding Radon measure $\dmu=\frac{1}{\theta^{1/2}(\log\theta)^2}\1_{[2,\infty)}(\theta)\,\diff\theta$ satisfies \cref{sol_assum_kernel} with $r(\theta)=1\wedge(\theta^{-1/2})$:
\begin{equation*}
	\int_\supp r(\theta)\dmu=\int^\infty_2\frac{\diff\theta}{\theta(\log\theta)^2}=\frac{1}{\log2}<\infty,
\end{equation*}
but does not satisfy \eqref{main_eq_Abi-Jaber} since, for any $\alpha<\frac{1}{2}$,
\begin{equation*}
	\int_\supp1\wedge(\theta^{-\alpha})\dmu=\int^\infty_2\frac{\diff\theta}{\theta^{\frac{1}{2}+\alpha}(\log\theta)^2}=\infty.
\end{equation*}
\end{rem}


\section{Proof of the main theorem}\label{proof}

In this section, we prove \cref{main_theo_main}. Our aim is to construct a sequence $\{Y^k\}_{k\in\bN}$ of mild solutions of some (strongly) well-posed SEEs and show that:
\begin{itemize}
\item
the sequence of the laws of $\{Y^k\}_{k\in\bN}$ is a Cauchy sequence in $\cP(\Lambda)$ with respect to the weak convergence topology, which implies weak existence of the original SEE \eqref{intro_eq_SEE-lift}, and that
\item
for an arbitrarily given weak solution $(Y,W,\Omega,\cF,\bF,\bP)$ of the original SEE \eqref{intro_eq_SEE-lift}, the law of $Y$ is specified as the weak limit of the laws of $\{Y^k\}_{k\in\bN}$, which implies uniqueness in law.
\end{itemize}
We will construct such a sequence in \cref{proof-main}. In order to show the above assertions, we need to estimate the L\'{e}vy--Prokhorov metric between $\Law_\bP(Y)$ and $\Law_\bP(\bar{Y})$, where $(Y,W,\Omega,\cF,\bF,\bP)$ is a given weak solution of an SEE, and $\bar{Y}$ is a mild solution of a (strongly) well-posed SEE approximating the original one. In the Lipschitz coefficients case, we can estimate the difference $Y-\bar{Y}$ in the strong sense by a standard method; see \cite[Theorem 2.17]{Ha23}. However, this kind of estimate is very difficult in the general non-Lipschitz (and even non-H\"{o}lder) setting, and we need to compare their probability laws in a more sophisticated way. Inspired by the work \cite{KlSc20} about SDEs with delay, we will perform the so-called \emph{Control-and-Reimburse (C-n-R)} strategy. Namely, instead of estimating $d_\LP(\Law_\bP(Y),\Law_\bP(\bar{Y}))$ directly, we divide the estimate into two steps:
\begin{equation}\label{proof_eq_CnR}
	d_\LP\big(\Law_\bP(Y),\Law_\bP(\bar{Y})\big)\leq\underbrace{d_\LP\big(\Law_\bP(Y),\Law_\bP(\hat{Y})\big)}_{\text{control}}+\underbrace{d_\LP\big(\Law_\bP(\hat{Y}),\Law_\bP(\bar{Y})\big)}_{\text{reimbursement}},
\end{equation}
where $\hat{Y}$ is a ``controlled version'' of $\bar{Y}$. The first term of the right-hand side of \eqref{proof_eq_CnR} is about the ``control-step'' where we aim to improve $\bar{Y}$ by an appropriately controlled version $\hat{Y}$ so that the difference $Y-\hat{Y}$ is easier to estimate in the strong sense; we will estimate its modified (but equivalent) norm in terms of the topology of the convergence in probability on $(\Omega,\cF,\bP)$; see \eqref{proof_eq_estimate-control} below. The second term of the right-hand side of \eqref{proof_eq_CnR} is about the ``reimbursement-step'' where we have to reimburse the impact of the control; we will estimate the total variation distance between the law of the controlled version $\hat{Y}$ and the law of $\bar{Y}$; see \eqref{proof_eq_estimate-reimburse} below.

The rest of this section is organized as follows: In \cref{proof-pre}, we summarize some preliminary results. In \cref{proof-CnR}, we provide key estimates for the C-n-R strategy. Then, in \cref{proof-main}, we prove \cref{main_theo_main}. After that, in \cref{proof-remark}, we make some remarks on the proof of our main theorem.

\subsection{Preliminaries}\label{proof-pre}

Suppose that \cref{sol_assum_kernel} holds. Recall the definitions \eqref{main_eq_r_m} and \eqref{main_eq_R-ep} of $r_m$, $R_m$ and $\ep_m$. For each $m\in[1,\infty)$, define
\begin{equation*}
	\mu_m[y]:=\int_\supp r_m(\theta)y(\theta)\dmu,\ \|y\|_m:=\Big(\int_\supp r_m(\theta)|y(\theta)|^2\dmu\Big)^{1/2},\ \ y\in\cH_\mu.
\end{equation*}
For $m=\infty$, we set $r_\infty(\theta):=1$ for any $\theta\in[0,\infty)$ and $\mu_\infty[\cdot]:=\mu[\cdot]$; see \eqref{sol_eq_mu}.


\begin{lemm}\label{proof_lemm_mu}
Suppose that \cref{sol_assum_kernel} holds.
\begin{itemize}
\item[(i)]
Let $m\in[1,\infty)$. Then, it holds that
\begin{equation*}
	\|y\|_{\cH_\mu}\leq\|y\|_m\leq r(m)^{-1/2}\|y\|_{\cH_\mu}
\end{equation*}
for any $y\in\cH_\mu$. In particular, the norm $\|\cdot\|_m$ is equivalent to the original norm $\|\cdot\|_{\cH_\mu}$ in $\cH_\mu$.
\item[(ii)]
Let $m\in[1,\infty)$. Then, it holds that
\begin{equation*}
	\big|\mu_m[y]\big|^2\leq R_m\|y\|^2_m
\end{equation*}
for any $y\in\cH_\mu$. In particular, the map $\mu_m[\cdot]$ is a bounded linear operator from $\cH_\mu$ to $\bR^n$.
\item[(iii)]
Let $m\in[1,\infty)$ and $M\in[m,\infty]$. Then, it holds that
\begin{equation*}
	\big|\mu_M[y]-\mu_m[y]\big|^2\leq\ep_m\int_\supp\theta r_m(\theta)|y(\theta)|^2\dmu\leq\frac{\ep_m}{r(m)}\int_\supp\theta r(\theta)|y(\theta)|^2\dmu
\end{equation*}
for any $y\in\cV_\mu$. In particular, $\lim_{m\to\infty}\mu_m[\cdot]|_{\cV_\mu}=\mu[\cdot]|_{\cV_\mu}$ with respect to the operator norm of bounded linear operators from $\cV_\mu$ to $\bR^n$.
\end{itemize}
\end{lemm}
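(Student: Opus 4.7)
\bigskip

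\noindent\textbf{Proof plan for Lemma~4.1.} All three assertions are pointwise/Cauchy--Schwarz computations based on the definition $r_m(\theta)=r(m\vee\theta)/r(m)$. The first step is to record the pointwise sandwich
\begin{equation*}
	r(\theta)\leq r_m(\theta)\leq r(m)^{-1}r(\theta),\quad \theta\in[0,\infty).
\end{equation*}
For $\theta\leq m$ we have $r_m(\theta)=1$, and since $r$ is non-increasing with values in $(0,1]$ this gives $r(\theta)\leq 1\leq r(\theta)/r(m)$. For $\theta>m$, $r_m(\theta)=r(\theta)/r(m)$, and since $r(m)\leq 1$ the left inequality follows, and the right one is equality. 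Multiplying this pointwise inequality by $|y(\theta)|^2$ and integrating against $\mu$ proves (i).

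For (ii), I would simply apply the Cauchy--Schwarz inequality with respect to the finite measure $r_m(\theta)\,\mu(\diff\theta)$:
\begin{equation*}
	\bigl|\mu_m[y]\bigr|\leq\int_\supp r_m(\theta)\,|y(\theta)|\dmu\leq\Bigl(\int_\supp r_m(\theta)\dmu\Bigr)^{1/2}\Bigl(\int_\supp r_m(\theta)\,|y(\theta)|^2\dmu\Bigr)^{1/2},
\end{equation*}
which is exactly $R_m^{1/2}\|y\|_m$. The boundedness of $\mu_m[\cdot]:\cH_\mu\to\bR^n$ is then immediate from (i).

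For (iii), the plan is first to establish the pointwise domination $0\leq r_M(\theta)-r_m(\theta)\leq 1-r_m(\theta)$ for all $\theta\in[0,\infty)$ and all $1\leq m\leq M\leq\infty$, treating the cases $\theta\leq m$, $m<\theta\leq M$, and $\theta>M$ separately (for $\theta>M$ one uses that $r(\theta)/r(M)\leq 1$ since $r$ is non-increasing). Then write
\begin{equation*}
	\mu_M[y]-\mu_m[y]=\int_\supp\bigl(r_M(\theta)-r_m(\theta)\bigr)\,y(\theta)\dmu=\int_\supp\frac{r_M(\theta)-r_m(\theta)}{\sqrt{\theta r_m(\theta)}}\cdot\sqrt{\theta r_m(\theta)}\,y(\theta)\dmu
\end{equation*}
and apply Cauchy--Schwarz in $L^2(\mu)$; using the domination just derived, the first factor is bounded by $\ep_m^{1/2}$, yielding the first inequality of (iii). (The ratio is interpreted as $0$ on $\{\theta=0\}$, where both numerator and the problematic $\theta^{-1}$ co-exist with $1-r_m=0$.) The second inequality is an instance of the same pointwise bound $r_m(\theta)\leq r(\theta)/r(m)$ used for (i). Finally, bounding $\int\theta r(\theta)|y(\theta)|^2\dmu$ by $\|y\|_{\cV_\mu}^2$ and invoking \eqref{main_eq_R-ep-lim} (which gives $\ep_m/r(m)\to 0$) produces the operator-norm convergence $\mu_m|_{\cV_\mu}\to\mu|_{\cV_\mu}$. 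None of these steps presents a genuine obstacle; the only point demanding care is the case analysis $\theta\leq m$, $m<\theta\leq M$, $\theta>M$ in the pointwise domination, which I would spell out explicitly.
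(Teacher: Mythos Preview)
Your proposal is correct and follows essentially the same approach as the paper's proof: the pointwise sandwich $r(\theta)\leq r_m(\theta)\leq r(\theta)/r(m)$ for (i), Cauchy--Schwarz against $r_m\,\mu$ for (ii), and for (iii) the same Cauchy--Schwarz splitting using the bound $(r_M-r_m)^2\leq(1-r_m)^2$ (the paper invokes the earlier stated monotonicity $r_m\leq r_M\leq 1$ directly rather than spelling out the case analysis, but this is the same argument). Your remarks on the $\theta=0$ issue and the explicit case analysis are more detailed than the paper but not substantively different.
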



\begin{proof}
The assertion (i) follows from the estimates $r(\theta)\leq r_m(\theta)\leq\frac{r(\theta)}{r(m)}$. Noting the definitions of $R_m$, $\mu_m[\cdot]$ and $\|\cdot\|_m$, by the Cauchy--Schwarz inequality,
\begin{equation*}
	\big|\mu_m[y]\big|^2=\Big|\int_\supp r_m(\theta)y(\theta)\dmu\Big|^2\leq\int_\supp r_m(\theta)\dmu\,\int_\supp r_m(\theta)|y(\theta)|^2\dmu=R_m\|y\|^2_m
\end{equation*}
for any $y\in\cH_\mu$. Thus, the assertion (ii) holds. Concerning the assertion (iii), note that $0\leq r_m(\theta)\leq r_M(\theta)\leq1$. Then, by the Cauchy--Schwarz inequality, we have
\begin{align*}
	\big|\mu_M[y]-\mu_m[y]\big|^2&=\Big|\int_\supp(r_M(\theta)-r_m(\theta))y(\theta)\dmu\Big|^2\\
	&\leq\int_\supp(1-r_m(\theta))^2\theta^{-1}r_m(\theta)^{-1}\dmu\,\int_\supp\theta r_m(\theta)|y(\theta)|^2\dmu\\
	&=\ep_m\int_\supp\theta r_m(\theta)|y(\theta)|^2\dmu
\end{align*}
for any $y\in\cV_\mu$. Thus, the first inequality holds. The second inequality follows from the inequality $r_m(\theta)\leq\frac{r(\theta)}{r(m)}$. By \eqref{main_eq_R-ep-lim}, we see that $\mu_m[\cdot]|_{\cV_\mu}$ converges to $\mu_\infty[\cdot]|_{\cV_\mu}=\mu[\cdot]|_{\cV_\mu}$ as $m\to\infty$ in the operator norm of bounded linear operators from $\cV_\mu$ to $\bR^n$. This completes the proof.
\end{proof}

For each $M\in[1,\infty]$, consider the following SEE:
\begin{equation}\label{proof_eq_SEE-M}
	\begin{dcases}
	\diff Y_t(\theta)=-\theta Y_t(\theta)\,\diff t+b(\mu_M[Y_t])\,\diff t+\sigma(\mu_M[Y_t])\,\diff W_t,\ \theta\in\supp,\ t>0,\\
	Y_0(\theta)=y(\theta),\ \theta\in\supp,
	\end{dcases}
\end{equation}
where $b:\bR^n\to\bR^n$ and $\sigma:\bR^n\to\bR^{n\times d}$ are measurable maps. Note that the above SEE with $M=\infty$ coincides with the original SEE \eqref{intro_eq_SEE-lift}. Also, by \cref{sol_prop_equiv}, the SEE \eqref{proof_eq_SEE-M} with $M<\infty$ corresponds to the SVE
\begin{equation*}
	X_t=x_M(t)+\int^t_0K_M(t-s)b(X_s)\,\diff s+\int^t_0K_M(t-s)\sigma(X_s)\,\diff W_s,\ \ t>0,
\end{equation*}
where $x_M(t)=\int^t_0e^{-\theta t}y(\theta)r_M(\theta)\dmu$ and $K_M(t):=\int_\supp e^{-\theta t}r_M(\theta)\dmu$ for $t>0$. We note that the kernel $K_M$ is regular if $M<\infty$.


\begin{lemm}\label{poof_lemm_estimate}
Suppose that \cref{sol_assum_kernel} holds. Assume that the maps $b$ and $\sigma$ satisfy the linear growth condition; there exists a constant $c_\LG>0$ such that, for $\varphi=b,\sigma$,
\begin{equation}\label{pre_eq_LG}
	|\varphi(x)|\leq c_\LG(1+|x|)
\end{equation}
for any $x\in\bR^n$. Then, there exists a constant $C_0=C_0(\mu,c_\LG)>0$ such that, for any $M\in[1,\infty]$, any initial condition $y\in\cH_\mu$, any weak solution $(Y,W,\Omega,\cF,\bF,\bP)$ to the SEE \eqref{proof_eq_SEE-M} and any $T>0$, the following holds:
\begin{equation*}
	\bE\Big[\sup_{t\in[0,T]}\|Y_t\|^2_{\cH_\mu}+\int^T_0\|Y_t\|^2_{\cV_\mu}\,\diff t\Big]\leq C_0e^{C_0T}(1+\|y\|^2_{\cH_\mu}).
\end{equation*}
\end{lemm}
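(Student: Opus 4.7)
My plan is to derive the estimate via It\^o's formula in the Gelfand triplet $\cV_\mu\hookrightarrow\cH_\mu\hookrightarrow\cV^*_\mu$ applied to $\|Y_t\|^2_{\cH_\mu}$, followed by Young's inequality, the Burkholder--Davis--Gundy (BDG) inequality and Gronwall's lemma. Exploiting the coercivity identity $\int_\supp\theta r(\theta)|y(\theta)|^2\dmu=\|y\|^2_{\cV_\mu}-\|y\|^2_{\cH_\mu}$ for the generator $y\mapsto-\theta y(\theta)$, together with the fact that the diffusion $\sigma(\mu_M[Y_s])$, seen as a constant-in-$\theta$ element of $\cH_\mu$, has Hilbert--Schmidt norm squared $R_1|\sigma(\mu_M[Y_s])|^2$ where $R_1:=\int_\supp r(\theta)\dmu<\infty$, one arrives at the identity
\begin{equation*}
\|Y_t\|^2_{\cH_\mu}+2\int_0^t\|Y_s\|^2_{\cV_\mu}\,\diff s=\|y\|^2_{\cH_\mu}+2\int_0^t\|Y_s\|^2_{\cH_\mu}\,\diff s+2\int_0^t\langle\mu_r[Y_s],b(\mu_M[Y_s])\rangle\,\diff s+N_t+R_1\int_0^t|\sigma(\mu_M[Y_s])|^2\,\diff s,
\end{equation*}
with $\mu_r[y]:=\int_\supp r(\theta)y(\theta)\dmu$ and $N_t$ a continuous local martingale.

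The key quantitative input, uniform in $M\in[1,\infty]$, is the bound $|\mu_M[y]|^2\leq C_\mu\|y\|^2_{\cV_\mu}$ for $y\in\cV_\mu$, where $C_\mu:=\int_\supp r(\theta)^{-1}(\theta+1)^{-1}\dmu$; this follows from Cauchy--Schwarz applied to $\mu_M[y]=\int_\supp r_M(\theta)y(\theta)\dmu$ after inserting the weight $(r(\theta)(\theta+1))^{\pm1/2}$ and using $r_M\leq1$, and $C_\mu$ is finite under \cref{sol_assum_kernel} thanks to the pointwise chain $r(\theta)^{-1}(\theta+1)^{-1}\leq(\theta+1)^{-1/2}\leq r(\theta)$. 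Combined with $|\mu_r[y]|^2\leq R_1\|y\|^2_{\cH_\mu}$ (another Cauchy--Schwarz) and the linear growth $|\varphi(x)|^2\leq 2c_\LG^2(1+|x|^2)$, Young's inequality with a free parameter controls the drift and diffusion contributions on the right-hand side by a quantity of the form $C_1\|Y_s\|^2_{\cH_\mu}+C_2+\alpha\|Y_s\|^2_{\cV_\mu}$; a suitable choice of the Young parameter arranges $\alpha<2$, enabling absorption of the $\|Y_s\|^2_{\cV_\mu}$ contribution into the corresponding term on the left. A standard localization via the stopping times $\tau_N:=\inf\{t: \int_0^t\|Y_s\|^2_{\cV_\mu}\,\diff s\geq N\}$, which satisfy $\tau_N\uparrow\infty$ almost surely by the very definition of a mild solution, legitimizes taking expectations; BDG applied to $\sup_{t\leq T\wedge\tau_N}|N_t|$ (with a further Young step absorbing an extra $\|Y_s\|^2_{\cH_\mu}$ factor) followed by Gronwall's lemma and Fatou's lemma as $N\to\infty$ then produces the asserted bound.

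The main technical obstacle is the absorption step: the noise term contributes an irreducible coefficient $2R_1c_\LG^2C_\mu$ in front of $\|Y_s\|^2_{\cV_\mu}$ on the right-hand side that does not improve with the Young parameter alone. To handle this in full generality, one uses the decomposition $\|y\|^2_{\cV_\mu}=\|y\|^2_{\cH_\mu}+\int_\supp\theta r(\theta)|y(\theta)|^2\dmu$ inside the bound $|\mu_M[y]|^2\leq C_\mu(\|y\|^2_{\cH_\mu}+\int_\supp\theta r(\theta)|y(\theta)|^2\dmu)$: the $\|y\|^2_{\cH_\mu}$-part is absorbed into the Gronwall term, while the $\int_\supp\theta r(\theta)|y(\theta)|^2\dmu$-part is absorbed against the dissipation $2\int_0^t\int_\supp\theta r(\theta)|Y_s(\theta)|^2\dmu\,\diff s$ implicit on the left of the It\^o identity (as the difference $2\int_0^t\|Y_s\|^2_{\cV_\mu}\,\diff s-2\int_0^t\|Y_s\|^2_{\cH_\mu}\,\diff s$). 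All constants depend only on $\mu$ (through $R_1$ and $C_\mu$, both finite under \cref{sol_assum_kernel}) and $c_\LG$, so the final $C_0=C_0(\mu,c_\LG)$ is independent of $M\in[1,\infty]$ as required.
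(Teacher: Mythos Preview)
Your overall plan---It\^o's formula in the variational triplet, the uniform bound $|\mu_M[y]|\leq\|y\|_{L^1(\mu)}$ (equivalently $|\mu_M[y]|^2\leq C_\mu\|y\|^2_{\cV_\mu}$) as the $M$-independent input, then BDG and Gronwall after localization---is exactly the route the paper invokes via \cite[Theorem~2.16]{Ha23}, and most of the details you sketch are fine. There is, however, a real gap at the absorption step for the \emph{noise} term, and the fix you propose does not close it.

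The It\^o correction contributes $R_1\int_0^t|\sigma(\mu_M[Y_s])|^2\,\diff s$ on the right, and after linear growth plus your Cauchy--Schwarz bound this produces a term $2R_1c_\LG^2C_\mu\int_0^t\|Y_s\|^2_{\cV_\mu}\,\diff s$. Unlike the drift term, there is no Young parameter here to shrink the coefficient: the factor $R_1$ is the Hilbert--Schmidt constant and $C_\mu$ comes from a fixed Cauchy--Schwarz. Your proposed remedy---replacing $\|y\|^2_{\cV_\mu}$ by $\|y\|^2_{\cH_\mu}+\int_\supp\theta r(\theta)|y(\theta)|^2\dmu$---does nothing to the coefficient in front of the $\int_\supp\theta r(\theta)|y(\theta)|^2\dmu$ piece, which is still $2R_1c_\LG^2C_\mu$; absorption against the left-hand dissipation $2\int_0^t\int_\supp\theta r(\theta)|Y_s(\theta)|^2\dmu\,\diff s$ only works if $2R_1c_\LG^2C_\mu<2$, and nothing in \cref{sol_assum_kernel} guarantees $R_1c_\LG^2C_\mu<1$ (rescaling $\mu$ alone shows this product can be arbitrarily large).

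What you actually need is an \emph{interpolation} form of the embedding $\cV_\mu\hookrightarrow L^1(\mu)$: for every $\ep>0$ there exists $C_\ep=C_\ep(\mu)<\infty$ such that
\[
\|y\|^2_{L^1(\mu)}\leq C_\ep\|y\|^2_{\cH_\mu}+\ep\int_\supp\theta r(\theta)|y(\theta)|^2\dmu\quad\text{for all }y\in\cV_\mu.
\]
This follows by splitting the integral at a level $A$: for $\theta>A$ use Cauchy--Schwarz with weight $\theta r(\theta)$ and note that $\int_{\supp\cap(A,\infty)}(\theta r(\theta))^{-1}\dmu\leq\int_{\supp\cap(A,\infty)}r(\theta)\dmu\to0$ as $A\to\infty$ (since $(\theta r(\theta))^{-1}\leq r(\theta)$ for $\theta\geq1$, which is $r(\theta)^2\geq\theta^{-1}$, a consequence of $r(\theta)\geq\theta^{-1/2}$); for $\theta\leq A$ use Cauchy--Schwarz with weight $r(\theta)$ and the fact that $\int_{\supp\cap[0,A]}r(\theta)^{-1}\dmu<\infty$ because $\mu$ is Radon and $r(\theta)^{-1}\leq 1\vee\theta^{1/2}$. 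Choosing $\ep$ with $4R_1c_\LG^2\ep<2$ then makes the absorption work, and the remaining $C_\ep\|Y_s\|^2_{\cH_\mu}$ goes into the Gronwall term. With this correction your argument goes through and yields a constant $C_0$ depending only on $\mu$ (through $R_1$, $C_\ep$, and the chosen $\ep$) and $c_\LG$, uniformly in $M\in[1,\infty]$.
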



\begin{proof}
This lemma can be proved by the same arguments as in the proof of \cite[Theorem 2.16]{Ha23} where the case $M=\infty$ was considered; though we cannot use this theorem since we need a uniform estimate with respect to $M\in[1,\infty]$. Nevertheless, we can apply its proof to our lemma by noting that $|\mu_M[y]|\leq\|y\|_{L^1(\mu)}$ for any $y\in\cV_\mu$ and any $M\in[1,\infty]$; the key is that the right-hand side does not depend on $M$. This kind of estimate is essential in the proof of \cite[Theorem 2.16]{Ha23}, and then completely the same arguments show the desired a priori estimate with the constant $C_0=C_0(\mu,c_\LG)$ which does not depend on $M\in[1,\infty]$, $y\in\cH_\mu$, or the choice of the weak solution $(Y,W,\Omega,\cF,\bF,\bP)$.
\end{proof}


\begin{lemm}\label{pre_lemm_SEE-M}
Suppose that \cref{sol_assum_kernel} holds. Assume that $b$ and $\sigma$ satisfy the linear growth condition and the local Lipschitz condition; for any $k\in\bN$, there exists a constant $L_k>0$ such that, for $\varphi=b,\sigma$,
\begin{equation*}
	|\varphi(x)-\varphi(\bar{x})|\leq L_k|x-\bar{x}|
\end{equation*}
for any $x,\bar{x}\in\bR^n$ satisfying $|x|\vee|\bar{x}|\leq k$. Let $M\in[1,\infty)$, that is, $M$ is assumed to be finite. Then, for any initial condition $y\in\cH_\mu$, weak existence and pathwise uniqueness hold for the SEE \eqref{proof_eq_SEE-M}. In particular, uniqueness in law and strong existence hold.

Under the above assumptions, let a filtered probability space $(\Omega,\cF,\bF,\bP)$ and a $d$-dimensional $\bF$-Brownian motion $W$ be given. For each $y\in\cH_\mu$, denote by $Y^y$ the mild solution of the SEE \eqref{proof_eq_SEE-M} with the initial condition $y$. Then, the following hold:
\begin{itemize}
\item
The map $y\mapsto Y^y$ is continuous in probability as a map from $(\cH_\mu,\|\cdot\|_{\cH_\mu})$ to the space of $\Lambda$-valued random variables on $(\Omega,\cF,\bP)$. In particular, the map $y\mapsto\Law_\bP(Y^y)$ from $(\cH_\mu,\|\cdot\|_{\cH_\mu})$ to $(\cP(\Lambda),d_\LP)$ is continuous.
\item
The process $Y^y$ is a time-homogeneous Markov process on $\cH_\mu$, which has the Feller property.
\end{itemize}
\end{lemm}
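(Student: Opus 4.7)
The plan is to recognize that when $M<\infty$, the SEE \eqref{proof_eq_SEE-M} is essentially a standard stochastic evolution equation with locally Lipschitz, linearly growing coefficients on the Hilbert space $\cH_\mu$. Indeed, by \cref{proof_lemm_mu}(ii), $\mu_M[\cdot]$ is a \emph{bounded linear} operator from $\cH_\mu$ to $\bR^n$ with operator norm at most $R_M^{1/2}\leq r(M)^{-1/2}(\int_\supp r\dmu)^{1/2}<\infty$. Consequently, the composed coefficients $\hat b_M(y):=b(\mu_M[y])$ and $\hat\sigma_M(y):=\sigma(\mu_M[y])$ inherit the linear growth from $b,\sigma$ and are \emph{locally Lipschitz} from $\cH_\mu$ into $\bR^n$ and $\bR^{n\times d}$, respectively, with a Lipschitz constant depending on $L_k$, $M$, and $\mu$. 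This is exactly the setting in which classical mild-solution theory on the Gelfand triplet $\cV_\mu\hookrightarrow\cH_\mu\hookrightarrow\cV_\mu^*$ applies.

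Given that, I would first establish pathwise uniqueness by a standard argument on the Gelfand triplet: for two solutions $Y^1,Y^2$ with the same Brownian motion and starting point, set $Z_t:=Y^1_t-Y^2_t$, apply the It\^o formula for $\|Z_t\|_{\cH_\mu}^2$ (the variational It\^o formula on $\cV_\mu\hookrightarrow\cH_\mu\hookrightarrow\cV_\mu^*$, as already used in \cite{Ha23}), which produces a dissipative term $-2\int_\supp\theta|Z_t(\theta)|^2 r(\theta)\dmu\,dt$ plus terms controlled by the local Lipschitz constants of $\hat b_M,\hat\sigma_M$. Together with a localization stopping time $\tau_k:=\inf\{t:\|Y^1_t\|_{\cH_\mu}\vee\|Y^2_t\|_{\cH_\mu}>k\}$, Gronwall's inequality yields $Z\equiv 0$ up to $\tau_k$, hence globally. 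For existence, I would build strong solutions on $[0,T]$ by Picard iteration in $\Lambda_T$ against truncated globally Lipschitz versions of $\hat b_M,\hat\sigma_M$ (replacing the argument by its projection onto a ball of radius $k$), then remove the truncation using the a priori bound of \cref{poof_lemm_estimate}, which prevents explosion uniformly in $k$. The Yamada--Watanabe principle for SEEs then upgrades strong existence $+$ pathwise uniqueness to uniqueness in law (and makes every weak solution strong).

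For the continuity statement, for two initial data $y,\bar y\in\cH_\mu$ and the corresponding strong solutions $Y^y,Y^{\bar y}$ on a common stochastic basis, I would again apply the variational It\^o formula to $\|Y^y_t-Y^{\bar y}_t\|_{\cH_\mu}^2$ and localize by $\tau_k:=\inf\{t:\|Y^y_t\|_{\cH_\mu}\vee\|Y^{\bar y}_t\|_{\cH_\mu}>k\}$. The dissipative $\cV_\mu$-term on the left, combined with Gronwall applied to the $\cH_\mu$-part, gives
\begin{equation*}
\bE\Bigl[\sup_{t\in[0,T\wedge\tau_k]}\|Y^y_t-Y^{\bar y}_t\|_{\cH_\mu}^2+\int_0^{T\wedge\tau_k}\|Y^y_t-Y^{\bar y}_t\|_{\cV_\mu}^2\,dt\Bigr]\leq C(T,k,M)\,\|y-\bar y\|_{\cH_\mu}^2,
\end{equation*}
while the a priori estimate of \cref{poof_lemm_estimate} together with Markov's inequality makes $\bP(\tau_k<T)$ small uniformly for $\|y\|_{\cH_\mu},\|\bar y\|_{\cH_\mu}$ in a bounded set. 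Taking $\bar y\to y$ in $\cH_\mu$ thus gives convergence in probability in $(\Lambda_T,\|\cdot\|_{\Lambda_T})$ for every $T$, hence in $(\Lambda,d_\Lambda)$; continuity of $y\mapsto\Law_\bP(Y^y)$ in the L\'evy--Prokhorov metric is then automatic. The time-homogeneous Markov property is a standard consequence of strong uniqueness together with the time-homogeneity of the coefficients and the flow property of the Brownian shift: conditional on $\cF_s$, the shifted process $(Y^y_{s+t})_{t\geq 0}$ is the unique strong solution started from $Y^y_s$ driven by the shifted Brownian motion, so its conditional law depends only on $Y^y_s$; Feller follows because we just showed $y\mapsto\Law_\bP(Y^y_t)$ is continuous in the weak topology, hence $P_t f\in\Cb$ for $f\in\Cb$.

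The main obstacle I anticipate is keeping all quantitative constants (Lipschitz and growth constants for $\hat b_M,\hat\sigma_M$, and the $\cH_\mu$-embedding constant of $\mu_M[\cdot]$) controlled in a way that survives the localization, so that the Gronwall estimate really does close. Conceptually there is no new difficulty beyond what is already done in \cite[Theorem 2.17]{Ha23} for the Lipschitz case, but in contrast to the $M=\infty$ situation the boundedness of $\mu_M[\cdot]$ on all of $\cH_\mu$ (not merely $\cV_\mu$) is exactly what lets the classical local-Lipschitz theory apply directly; this is the reason the lemma is restricted to $M<\infty$.
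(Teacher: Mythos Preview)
Your proposal is correct and follows essentially the same strategy as the paper: the key observation that $\mu_M[\cdot]$ is bounded on $\cH_\mu$ when $M<\infty$, so that $\hat b_M,\hat\sigma_M$ are locally Lipschitz on $\cH_\mu$, is exactly what the paper uses, and well-posedness then comes from \cite[Theorem 2.18]{Ha23} plus Yamada--Watanabe.

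There is one organizational difference worth noting. For both the continuity of $y\mapsto Y^y$ and the Markov property, the paper does not argue directly on $Y^y$ as you do. Instead it introduces the globally Lipschitz truncations $b_k,\sigma_k$ (projection of the argument onto the ball of radius $k$), obtains the corresponding solutions $Y^{k,y}$, and invokes \cite[Theorems 2.17 and 2.19]{Ha23} to get, respectively, the $L^2$-continuity estimate and the Markov property for $Y^{k,y}$. It then shows $Y^{k,y}\to Y^y$ in probability uniformly on bounded sets of $y$ (via the stopping time $\zeta^{k,y}=\inf\{t:|\mu_M[Y^y_t]|\geq k\}$ and \cref{poof_lemm_estimate}), and transfers both properties to the limit; the Markov property in particular is obtained by a tightness/approximation argument passing from $P^k_t$ to $P_t$. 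Your direct route---It\^o formula on $Y^y-Y^{\bar y}$ with localization for continuity, and the flow argument from strong uniqueness for the Markov property---is valid and arguably more elementary, but the paper's two-step approach has the advantage of reducing everything to already-proved results in \cite{Ha23} without redoing any It\^o calculations, and it sidesteps the (routine but not entirely trivial) issue of strong existence for $\cF_s$-measurable random initial data that your flow argument implicitly uses.
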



\begin{proof}
Since $M<\infty$, by \cref{proof_lemm_mu} (ii), the map $\cH_\mu\ni y\mapsto\mu_M[y]\in\bR^n$ is a bounded linear operator. Thus, by the assumption, the maps $\cH_\mu\ni y\mapsto b(\mu_M[y])\in\bR^n$ and $\cH_\mu\ni y\mapsto\sigma(\mu_M[y])\in\bR^{n\times d}$ are locally Lipschitz continuous. Thus, by \cite[Theorem 2.18]{Ha23}, we see that weak existence and pathwise uniqueness hold for the SEE \eqref{proof_eq_SEE-M}. In particular, by the Yamada--Watanabe theorem (see \cite{Ku14} for a general version of the Yamada--Watanabe theorem), uniqueness in law and strong existence hold.

Let a filtered probability space $(\Omega,\cF,\bF,\bP)$ and a $d$-dimensional $\bF$-Brownian motion $W$ be fixed. For each $k\in\bN$ and $y\in\cH_\mu$, let $Y=Y^{k,y}$ be the mild solution of the SEE
\begin{equation}\label{proof_eq_SEE-M-k}
	\begin{dcases}
	\diff Y_t(\theta)=-\theta Y_t(\theta)\,\diff t+b_k(\mu_M[Y_t])\,\diff t+\sigma_k(\mu_M[Y_t])\,\diff W_t,\ \ \theta\in\supp,\ t>0,\\
	Y_0(\theta)=y(\theta),\ \ \theta\in\supp,
	\end{dcases}
\end{equation}
where $b_k:\bR^n\to\bR^n$ and $\sigma_k:\bR^n\to\bR^{n\times d}$ are defined by
\begin{equation*}
	b_k(x):=
	\begin{dcases}
	b(x)\ &\text{if $|x|\leq k$},\\
	b\Big(\frac{k}{|x|}x\Big)\ &\text{if $|x|>k$}
	\end{dcases}
	\ \text{and}\ 
	\sigma_k(x):=
	\begin{dcases}
	\sigma(x)\ &\text{if $|x|\leq k$},\\
	\sigma\Big(\frac{k}{|x|}x\Big)\ &\text{if $|x|>k$}.
	\end{dcases}
\end{equation*}
Note that the maps $\cH_\mu\ni y\mapsto b_k(\mu_M[y])\in\bR^n$ and $\cH_\mu\ni y\mapsto\sigma_k(\mu_M[y])\in\bR^{n\times d}$ are globally Lipschitz continuous. Thus, by \cite[Theorem 2.17]{Ha23}, there exists a constant $C_k>0$ such that, for any $T>0$ and any $y_1,y_2\in\cH_\mu$,
\begin{equation*}
	\bE\Big[\sup_{t\in[0,T]}\|Y^{k,y_1}_t-Y^{k,y_2}_t\|^2_{\cH_\mu}+\int^T_0\|Y^{k,y_1}_t-Y^{k,y_2}_t\|^2_{\cV_\mu}\,\diff t\Big]\leq C_ke^{C_kT}\|y_1-y_2\|^2_{\cH_\mu}.
\end{equation*}
In particular, $y\mapsto Y^{k,y}$ is continuous in probability as a map from $(\cH_\mu,\|\cdot\|_{\cH_\mu})$ to the space of $\Lambda$-valued random variables on $(\Omega,\cF,\bP)$. Furthermore, by \cite[Theorem 2.19]{Ha23}, we see that $Y^{k,y}$ is a Markov process on $\cH_\mu$ for any $y\in\cH_\mu$.

Let $y\in\cH_\mu$ be fixed, and denote by $Y^y$ the strong solution of the SEE \eqref{proof_eq_SEE-M} with the initial condition $y$. Define
\begin{equation*}
	\zeta^{k,y}:=\inf\{t\geq0\,|\,|\mu_M[Y^y_t]|\geq k\}.
\end{equation*}
Again by \cite[Theorem 2.17]{Ha23}, there exists a constant $C_k>0$ such that, for any $T>0$,
\begin{align*}
	&\bE\Big[\sup_{t\in[0,T]}\|Y^{k,y}_{t\wedge\zeta^{k,y}}-Y^y_{t\wedge\zeta^{k,y}}\|^2_{\cH_\mu}+\int^{T\wedge\zeta^{k,y}}_0\|Y^{k,y}_t-Y^y_t\|^2_{\cV_\mu}\,\diff t\Big]\\
	&\leq C_ke^{C_kT}\bE\Big[\int^{T\wedge\zeta^{k,y}}_0\Big\{|b_k(\mu_M[Y^y_t])-b(\mu_M[Y^y_t])|^2+|\sigma_k(\mu_M[Y^y_t])-\sigma(\mu_M[Y^y_t])|^2\Big\}\,\diff t\Big].
\end{align*}
By the definition of the maps $b_k$, $\sigma_k$ and the stopping time $\zeta^{k,y}$, we see that the right-hand side is zero. Thus, by using \cref{proof_lemm_mu} (i), (ii) and \cref{poof_lemm_estimate}, we have
\begin{align*}
	\bP\Big(Y^y|_{[0,T]}\neq Y^{k,y}|_{[0,T]}\ \text{in $\Lambda_T$}\Big)&\leq\bP(\zeta^{k,y}<T)=\bP\Big(\sup_{t\in[0,T]}|\mu_M[Y^y_t]|\geq k\Big)\\
	&\leq\frac{1}{k^2}\bE\Big[\sup_{t\in[0,T]}|\mu_M[Y^y_t]|^2\Big]\\
	&\leq\frac{R_M}{r(M)k^2}\bE\Big[\sup_{t\in[0,T]}\|Y^y_t\|^2_{\cH_\mu}\Big]\\
	&\leq\frac{R_M}{r(M)k^2}C_0e^{C_0T}(1+\|y\|^2_{\cH_\mu}),
\end{align*}
where $C_0=C_0(c_\LG,\mu)>0$ is the constant appearing in \cref{poof_lemm_estimate}. In particular, $Y^{k,y}\to Y^y$ in $\Lambda$ as $k\to\infty$ in probability on $(\Omega,\cF,\bP)$ uniformly in $y$ on each bounded subset of $\cH_\mu$. By the continuity of the map $y\mapsto Y^{k,y}$, we see that $y\mapsto Y^y$ is continuous in probability as a map from $(\cH_\mu,\|\cdot\|_{\cH_\mu})$ to the space of $\Lambda$-valued random variables on $(\Omega,\cF,\bP)$, and in particular weakly continuous.

The Markov property of $Y=Y^y$ follows from that of $Y^k=Y^{k,y}$ and a standard approximation argument. Denote by $\{P^k_t\}_{t\geq0}$ the Markov semigroup (defined on $\Bb$) associated with the approximating SEE \eqref{proof_eq_SEE-M-k}, and define
\begin{equation*}
	P_tf(y):=\bE^y\big[f(Y^y_t)\big],\ \ t\geq0,\ y\in\cH_\mu,\ f\in\Bb,
\end{equation*}
where $(Y^y,W^y,\Omega^y,\cF^y,\bF^y,\bP^y)$ is the weak solution (which is unique in law) of the SEE \eqref{proof_eq_SEE-M} with the initial condition $y$. By the above discussions, for any $f\in\Cb$, we have $P_tf\in\Cb$ and $\lim_{k\to\infty}P^k_tf(y)=P_tf(y)$ uniformly in $y$ on each bounded subset of $\cH_\mu$. By the Markov property of $Y^k$, for any $k\in\bN$, $0\leq s_1<\cdots<s_p=s<t$, $f\in\cC_\mathrm{b}(\cH_\mu^p)$ and $g\in \cC_\mathrm{b}(\cH_\mu)$, we have
\begin{align*}
	\bE\Big[f(Y^k_{s_1},\dots,Y^k_{s_{p}})g(Y^k_t)\Big]&=\bE\Big[f(Y^k_{s_1},\dots,Y^k_{s_{p}})P^k_{t-s}g(Y^k_s)\Big]\\
	&=\bE\Big[f(Y^k_{s_1},\dots,Y^k_{s_{p}})P_{t-s}g(Y^k_s)\Big]+\bE\Big[f(Y^k_{s_1},\dots,Y^k_{s_{p}})\big(P^k_{t-s}g(Y^k_s)-P_{t-s}g(Y^k_s)\big)\Big].
\end{align*}
On the one hand, by the facts that $\lim_{k\to\infty}Y^k=Y$ weakly on $\Lambda$ and that $P_{t-s}g\in\Cb$, we see that
\begin{equation*}
	\lim_{k\to\infty}\bE\Big[f(Y^k_{s_1},\dots,Y^k_{s_{p}})g(Y^k_t)\Big]=\bE\Big[f(Y_{s_1},\dots,Y_{s_{p}})g(Y_t)\Big]
\end{equation*}
and
\begin{equation*}
	\lim_{k\to\infty}\bE\Big[f(Y^k_{s_1},\dots,Y^k_{s_{p}})P_{t-s}g(Y^k_s)\Big]=\bE\Big[f(Y_{s_1},\dots,Y_{s_{p}})P_{t-s}g(Y_s)\Big].
\end{equation*}
On the other hand, by the tightness of $\{Y^k_s\}_{k\in\bN}$ on $\cH_\mu$, for any $\ep>0$, there exists a compact set $\cK_\ep\subset\cH_\mu$ such that $\bP(Y^k_s\notin \cK_\ep)<\ep$ for any $k\in\bN$, and hence
\begin{align*}
	&\Big|\bE\Big[f(Y^k_{s_1},\dots,Y^k_{s_{p}})\big(P^k_{t-s}g(Y^k_s)-P_{t-s}g(Y^k_s)\big)\Big]\Big|\\
	&\leq\|f\|_\infty\bE\Big[\big|P^k_{t-s}g(Y^k_s)-P_{t-s}g(Y^k_s)\big|\1_{\{Y^k_s\in\cK_\ep\}}\Big]+2\|f\|_\infty\|g\|_\infty\bP\big(Y^k_s\notin\cK_\ep\big)\\
	&\leq\|f\|_\infty\bE\Big[\big|P^k_{t-s}g(Y^k_s)-P_{t-s}g(Y^k_s)\big|\1_{\{Y^k_s\in\cK_\ep\}}\Big]+2\|f\|_\infty\|g\|_\infty\ep.
\end{align*}
Noting that $\lim_{k\to\infty}P^k_{t-s}g(y)=P_{t-s}g(y)$ uniformly in $y\in\cK_\ep$, the dominated convergence theorem yields that the first term of the last line above tends to zero as $k\to\infty$. Since $\ep>0$ is arbitrary, we see that
\begin{equation*}
	\lim_{k\to\infty}\bE\Big[f(Y^k_{s_1},\dots,Y^k_{s_{p}})\big(P^k_{t-s}g(Y^k_s)-P_{t-s}g(Y^k_s)\big)\Big]=0.
\end{equation*}
Consequently, we get
\begin{equation*}
	\bE\Big[f(Y_{s_1},\dots,Y_{s_{p}})g(Y_t)\Big]=\bE\Big[f(Y_{s_1},\dots,Y_{s_{p}})P_{t-s}g(Y_s)\Big].
\end{equation*}
Thus, the process $Y$ is a time-homogeneous Markov process on $\cH_\mu$ with the Markov semigroup $\{P_t\}_{t\geq0}$. The Feller property has been already proved; $P_tf\in\Cb$ for any $f\in\Cb$. This completes the proof.
\end{proof}

The following lemma is useful to show weak existence for the SEE \eqref{intro_eq_SEE-lift}. We borrow the idea from the work of Abi Jaber et al.\ \cite{AbiJaCuLaPu21} on weak existence for SVEs.


\begin{lemm}\label{proof_lemm_weak-existence}
Let \cref{sol_assum_kernel} hold. Suppose that $b:\bR^n\to\bR^n$ and $\sigma:\bR^n\to\bR^{n\times d}$ are continuous. For each $k\in\bN$, let $b_k:\bR^n\to\bR^n$ and $\sigma_k:\bR^n\to\bR^{n\times d}$ be continuous maps, and let $M_k\in[1,\infty)$. Assume that $\lim_{k\to\infty}M_k=\infty$, that $(b_k,\sigma_k)\to(b,\sigma)$ as $k\to\infty$ uniformly on any compact subsets of $\bR^n$, and that $|b_k(x)|+|\sigma_k(x)|\leq c_\LG(1+|x|)$ for any $x\in\bR^n$ and any $k\in\bN$ with a common constant $c_\LG>0$. Fix $y\in\cH_\mu$, and assume that, for each $k\in\bN$, there exists a weak solution $(Y^k,W^k,\Omega^k,\cF^k,\bF^k,\bP^k)$ to the SEE
\begin{equation*}
	\begin{dcases}
	\diff Y^k_t(\theta)=-\theta Y^k_t(\theta)\,\diff t+b_k(\mu_{M_k}[Y^k_t])\,\diff t+\sigma_k(\mu_{M_k}[Y^k_t])\,\diff W^k_t,\ \theta\in\supp,\ t>0,\\
	Y^k_0(\theta)=y(\theta),\ \theta\in\supp.
	\end{dcases}
\end{equation*}
Furthermore, assume that $\{Y^k\}_{k\in\bN}$ is tight on $\Lambda$. Then, weak existence holds for the SEE \eqref{intro_eq_SEE-lift} with the initial condition $y$.
\end{lemm}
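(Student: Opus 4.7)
I would follow the classical tightness-plus-Skorohod-plus-identification strategy for weak existence, with the subtle point being the passage to the limit in the nonlinear term $\mu_{M_k}[Y^k]$ when the kernel is singular. Since each $W^k$ is a standard $d$-dimensional Brownian motion, $\{W^k\}_{k\in\bN}$ is automatically tight on $C(\bR_+;\bR^d)$, so together with the assumed tightness of $\{Y^k\}_{k\in\bN}$ on $\Lambda$ the joint family $\{(Y^k,W^k)\}_{k\in\bN}$ is tight on the Polish space $\Lambda\times C(\bR_+;\bR^d)$. Extract a weakly convergent subsequence by Prokhorov and apply the Skorohod representation theorem to obtain, on a new probability space $(\tilde\Omega,\tilde\cF,\tilde\bP)$, copies $(\tilde Y^k,\tilde W^k)_{k\in\bN}$ converging $\tilde\bP$-a.s.\ in $\Lambda\times C(\bR_+;\bR^d)$ to some $(\tilde Y,\tilde W)$. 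Let $\tilde\bF$ denote the usual augmentation of the filtration generated by $(\tilde Y,\tilde W)$; a standard L\'{e}vy-characterization argument (using that each $\tilde W^k$ is Brownian with respect to the filtration generated by $(\tilde Y^k,\tilde W^k)$) shows that $\tilde W$ is an $\tilde\bF$-Brownian motion and, on the Skorohod space, each $\tilde Y^k$ still solves the $k$-th approximating mild equation driven by $\tilde W^k$.

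The crucial step is to show that $\mu_{M_k}[\tilde Y^k_t]\to\mu[\tilde Y_t]$ in $L^2([0,T];\bR^n)$ $\tilde\bP$-a.s.\ for every $T>0$. I would split
\[
	\mu_{M_k}[\tilde Y^k_t]-\mu[\tilde Y_t]=\mu_{M_k}[\tilde Y^k_t-\tilde Y_t]+(\mu_{M_k}-\mu)[\tilde Y_t].
\]
The first summand is dominated by $\|\tilde Y^k_t-\tilde Y_t\|_{L^1(\mu)}$ since $r_{M_k}\le1$; combining the continuous embedding $\cV_\mu\hookrightarrow L^1(\mu)$ with the $\Lambda_T$-convergence $\int^T_0\|\tilde Y^k_t-\tilde Y_t\|^2_{\cV_\mu}\,\diff t\to0$ then yields $\int^T_0|\mu_{M_k}[\tilde Y^k_t-\tilde Y_t]|^2\,\diff t\to0$ a.s. The second summand is treated via the quantitative estimate in \cref{proof_lemm_mu} (iii):
\[
	|(\mu_{M_k}-\mu)[\tilde Y_t]|^2\leq\frac{\ep_{M_k}}{r(M_k)}\|\tilde Y_t\|^2_{\cV_\mu},
\]
and \eqref{main_eq_R-ep-lim} together with $\int^T_0\|\tilde Y_t\|^2_{\cV_\mu}\,\diff t<\infty$ $\tilde\bP$-a.s.\ gives the desired vanishing. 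The assumed uniform-on-compacts convergence $(b_k,\sigma_k)\to(b,\sigma)$ and the continuity of $b$ and $\sigma$ then yield $b_k(\mu_{M_k}[\tilde Y^k_\cdot])\to b(\mu[\tilde Y_\cdot])$ and $\sigma_k(\mu_{M_k}[\tilde Y^k_\cdot])\to\sigma(\mu[\tilde Y_\cdot])$ in $L^2([0,T];\bR^n)$ a.s., with $L^2([0,T]\times\tilde\Omega)$-uniform integrability ensured by the linear-growth bound and the uniform a priori estimate \cref{poof_lemm_estimate}.

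With these convergences in hand, the drift integral in the $\theta$-wise mild form passes to the limit by dominated convergence, and the Brownian stochastic integral does so by a standard result on convergence of stochastic integrals (It\^{o} isometry together with the $L^2$-convergence of integrands and the uniform convergence of $\tilde W^k\to\tilde W$). Hence $\tilde Y$ satisfies the mild form of \eqref{intro_eq_SEE-lift} for $\mu$-a.e.\ $\theta\in\supp$ a.s., and $(\tilde Y,\tilde W,\tilde\Omega,\tilde\cF,\tilde\bF,\tilde\bP)$ is a weak solution. The main obstacle is precisely the identification step: because the integral functional $\mu[\cdot]$ is not continuous on $\cH_\mu$ when the kernel is singular, one cannot simply pass to the limit using $\Lambda$-convergence alone; the remedy is to exploit the $\cV_\mu$-integrability encoded in the norm \eqref{sol_eq_norm-Lambda} on $\Lambda_T$ together with the quantitative approximation of $\mu$ by $\mu_{M_k}$ on $\cV_\mu$ furnished by \cref{proof_lemm_mu} (iii).
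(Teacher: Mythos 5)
Your overall plan (tightness + Skorohod representation + identification of the limit), and your treatment of the key difficulty---the discontinuity of $\mu[\cdot]$ on $\cH_\mu$, resolved via \cref{proof_lemm_mu}~(iii) and the $\cV_\mu$-integrability built into the $\Lambda_T$-norm---match the paper's. The split $\mu_{M_k}[\tilde Y^k_t]-\mu[\tilde Y_t]=\mu_{M_k}[\tilde Y^k_t-\tilde Y_t]+(\mu_{M_k}-\mu)[\tilde Y_t]$ and the resulting $L^2$-convergence of $\tilde X^k$ to $\tilde X$ is exactly what the paper does.

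However, there is a genuine gap in your final identification step, precisely where you write that the stochastic integral ``does so by a standard result on convergence of stochastic integrals (It\^{o} isometry together with the $L^2$-convergence of integrands and the uniform convergence of $\tilde W^k\to\tilde W$).'' This glosses over an adaptedness obstruction that is not standard at all here. The integrand $\sigma_k(\mu_{M_k}[\tilde Y^k_\cdot])$ is predictable with respect to the filtration generated by $(\tilde Y^k,\tilde W^k)$, while the putative limit integrand $\sigma(\mu[\tilde Y_\cdot])$ is only defined a.e.\ and is predictable only with respect to the filtration generated by $(\tilde Y,\tilde W)$. No decomposition of the form $\int\phi_k\,\diff\tilde W^k=\int\phi_k\,\diff\tilde W+\int\phi_k\,\diff(\tilde W^k-\tilde W)$ is available, because $\phi_k$ is not adapted to the filtration of $\tilde W$, and $\tilde W^k-\tilde W$ is not a semimartingale in the filtration of $(\tilde Y^k,\tilde W^k)$; hence the ``It\^{o} isometry'' argument simply does not apply. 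The right tool would be a Kurtz--Protter-type theorem on convergence of stochastic integrals against a sequence of semimartingales, but that requires the integrands $\sigma_k(\mu_{M_k}[\tilde Y^k_\cdot])$ to converge to a c\`adl\`ag limit in a Skorokhod-type topology, and $\sigma(\mu[\tilde Y_\cdot])$ is not c\`adl\`ag in general when the kernel is singular---$\mu[\tilde Y_t]$ is only defined for a.e.\ $t$. So your sketch, as stated, does not close.

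The paper circumvents this issue altogether by leaving the SEE side as soon as possible: setting $X^k_t:=\mu_{M_k}[Y^k_t]$ and $Z^k_t:=\int^t_0 b_k(X^k_s)\,\diff s+\int^t_0\sigma_k(X^k_s)\,\diff W^k_s$, one obtains (via \cref{sol_prop_equiv}) the \emph{deterministic} Volterra relation $X^k_t=x_k(t)+\int^t_0 K_k(t-s)\,\diff Z^k_s$. Tightness of the continuous semimartingales $\{Z^k\}$ is then established by strong majorization and Jacod--Shiryaev criteria; the passage to the limit in the Volterra convolution is handled by the ready-made Theorem 1.6 of Abi Jaber et al.\ \cite{AbiJaCuLaPu21}, which is specifically designed for this purpose; and the Brownian motion is extracted \emph{at the end}, from the characteristics of the limit semimartingale $Z$, via a martingale representation on an enlarged space. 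This ordering sidesteps the adaptedness mismatch entirely, because one never tries to pass to the limit in a stochastic integral driven by the $\tilde W^k$'s. To make your approach rigorous, you would need to replace the stochastic-integral limit step with a martingale-problem or martingale-representation argument as the paper does.
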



\begin{proof}
For each $k\in\bN$, define $K_k(t):=\int_\supp e^{-\theta t}r_{M_k}(\theta)\dmu$, $x_k(t):=\int_\supp e^{-\theta t}y(\theta)r_{M_k}(\theta)\dmu$, $X^k_t:=\mu_{M_k}[Y^k_t]$, and
\begin{equation*}
	Z^k_t:=\int^t_0b_k(X^k_s)\,\diff s+\int^t_0\sigma_k(X^k_s)\,\diff W^k_s.
\end{equation*}
By \cref{sol_prop_equiv} (with $\mu(\diff\theta)$ replaced by $r_{M_k}(\theta)\dmu$), the pair $(X^k,Z^k)$ satisfies the SVE
\begin{equation*}
	X^k_t=x_k(t)+\int^t_0K_k(t-s)\,\diff Z^k_s,\ t>0.
\end{equation*}
Let $x(t):=(\cK y)(t)=\int_\supp e^{-\theta t}y(\theta)\dmu$. By \cref{proof_lemm_mu} (iii) and Tonelli's theorem, we have
\begin{align*}
	\int^\infty_0|x(t)-x_k(t)|^2\,\diff t&=\int^\infty_0\big|\mu[e^{-\cdot t}y]-\mu_{M_k}[e^{-\cdot t}y]\big|^2\,\diff t\\
	&\leq\frac{\ep_{M_k}}{r(M_k)}\int^\infty_0\int_\supp\theta r(\theta)e^{-2\theta t}|y(\theta)|^2\dmu\,\diff t\\
	&=\frac{\ep_{M_k}}{r(M_k)}\int_\supp\theta r(\theta)\frac{1}{2\theta}\1_{(0,\infty)}(\theta)|y(\theta)|^2\dmu\\
	&\leq\frac{\ep_{M_k}}{2r(M_k)}\|y\|^2_{\cH_\mu}.
\end{align*}
Noting \eqref{main_eq_R-ep-lim}, we see that $\lim_{k\to\infty}\int^\infty_0|x(t)-x_k(t)|^2\,\diff t=0$. By the same arguments, we can show that $\lim_{k\to\infty}\int^\infty_0|K(t)-K_k(t)|^2\,\diff t=0$.

Since $\{Y^k\}_{k\in\bN}$ is tight on the Polish space $\Lambda$, by Prokhorov's theorem, we can take a subsequence (again denoted by $\{Y^k\}_{k\in\bN}$) such that $\Law_{\bP^k}(Y^k)$ converges weakly to a probability measure on $\Lambda$. Then, by Skorokhod's representation theorem, there exist $\Lambda$-valued random variables $\tilde{Y}$ and $\tilde{Y}^k$, $k\in\bN$, defined on a common probability space $(\tilde{\Omega},\tilde{\cF},\tilde{\bP})$ such that $\lim_{k\to\infty}\tilde{Y}^k=Y$ in $\Lambda$ $\tilde{\bP}$-a.s.\ and that $\Law_{\tilde{\bP}}(\tilde{Y}^k)=\Law_{\bP^k}(Y^k)$ for any $k\in\bN$. Denote $\tilde{X}:=\mu[\tilde{Y}]$ and $\tilde{X}^k:=\mu_{M_k}[\tilde{Y}^k]$ for each $k\in\bN$. Let $T>0$ be fixed. Note that, by \cref{proof_lemm_mu} (iii), $\mu_{M_k}[\cdot]$ converges to $\mu[\cdot]$ as $k\to\infty$ in the operator norm of bounded linear operators from $\Lambda_T$ to $L^2(0,T;\bR^n)$ for any $T>0$. Thus, we see that $\lim_{k\to\infty}\int^T_0|\tilde{X}^k_t-\tilde{X}_t|^2\,\diff t=0$ $\tilde{\bP}$-a.s.\ for any $T>0$. In particular, noting that $\Law_{\tilde{\bP}}(\tilde{X}^k)=\Law_{\bP^k}(X^k)$ for each $k\in\bN$, we see that $\{X^k\}_{k\in\bN}$ is tight on $L^2_\loc(\bR_+;\bR^n)$.

Next, observe that the increasing process $\int^\cdot_0\big\{|b_k(X^k_t)|^2+|\sigma_k(X^k_t)|^2\big\}\,\diff t$ is strongly majorized by the increasing process $c^2_\LG\int^\cdot_0(1+|X^k_t|)^2\,\diff t$ in the sense that the difference of the two is increasing; see \cite[DefinitionVI.3.34]{JaSh03}. Furthermore, the tightness of $\{X^k\}_{k\in\bN}$ on $L^2_\loc(\bR_+;\bR^n)$ and the continuity of the map $x\mapsto c^2_\LG\int^\cdot_0(1+|x_t|)^2\,\diff t$ from $L^2_\loc(\bR_+;\bR^n)$ to $C(\bR_+;\bR)$ imply that $\{c^2_\LG\int^\cdot_0(1+|X^k_t|)^2\,\diff t\}_{k\in\bN}$ is tight on $C(\bR_+;\bR)$. Therefore, by \cite[Propositions VI.3.35 and VI.3.36 and Theorem VI.4.18]{JaSh03}, the continuous semimartingale $\{Z^k\}_{k\in\bN}$ is tight on $C(\bR_+;\bR^n)$, and hence $\{(X^k,Z^k)\}_{k\in\bN}$ is tight on the Polish space $L^2_\loc(\bR_+;\bR^n)\times C(\bR_+;\bR^n)$. By Prokhorov's theorem, there exists a subsequence (again denoted by $\{(X^k,Z^k)\}_{k\in\bN}$) such that $(X^k,Z^k)$ weakly converges to an $L^2_\loc(\bR_+;\bR^n)\times C(\bR_+;\bR^n)$-valued random variable $(X,Z)$ defined on a probability space $(\Omega,\cF,\bP)$.

Let $\bF$ be the augmentation of the filtration generated by $(\int^\cdot_0X_s\,\diff s,Z)$. Observe that the process $\bar{X}$ defined by
\begin{equation*}
	\bar{X}_t(\omega):=
	\begin{dcases}
	\lim_{h\downarrow0}\frac{1}{h}\int^t_{(t-h)\vee0}X_s(\omega)\,\diff s\ &\text{if the limit exists},\\
	0\ &\text{otherwise},
	\end{dcases}
	\ \ t\geq0,\ \omega\in\Omega,
\end{equation*}
is $\bF$-predictable. Furthermore, Lebesgue's differentiation theorem yields that $X_t(\omega)=\bar{X}_t(\omega)$ for a.e.\ $t>0$ for every $\omega\in\Omega$. Thus, replacing $X$ by $\bar{X}$, if necessary, we may assume that $X$ is $\bF$-predictable. Then, by \cite[Theorem 1.6]{AbiJaCuLaPu21}, it holds that
\begin{equation*}
	X_t=x(t)+\int^t_0K(t-s)\,\diff Z_s
\end{equation*}
$\bP$-a.s.\ for a.e.\ $t>0$. Besides, $Z$ is an $\bR^n$-valued continuous semimartingale with the canonical decomposition given by
\begin{equation*}
	Z_t=\int^t_0b(X_s)\,\diff s+M_t,\ t\geq0,
\end{equation*}
where $M$ is an $\bR^n$-valued continuous local martingale with $M_0=0$ such that
\begin{equation*}
	\langle M^i,M^j\rangle_t=\int^t_0a(X_s)_{i,j}\,\diff s,\ t\geq0,\ i,j\in\{1,\dots,n\},
\end{equation*}
with the notation $a(x):=\sigma(x)\sigma(x)^\top$. Extending the filtered probability space $(\Omega,\cF,\bF,\bP)$, if necessary, we can find a $d$-dimensional $\bF$-Brownian motion $W$ such that $M_t=\int^t_0\sigma(X_s)\,\diff W_s$ for any $t\geq0$ $\bP$-a.s.; see \cite[Theorem 3.4.2 and the arguments in the proof of Theorem 5.4.6]{KaSh91}. Then, we see that the tuple $(X,W,\Omega,\cF,\bF,\bP)$ is a weak solution of the SVE \eqref{intro_eq_SVE} with the forcing term $x=\cK y$. By \cref{sol_cor_equivalence} (i), weak existence holds for the SEE \eqref{intro_eq_SEE-lift} with the initial condition $y\in\cH_\mu$. This completes the proof of \cref{proof_lemm_weak-existence}.
\end{proof}

\subsection{Key estimates for the C-n-R strategy}\label{proof-CnR}

The following proposition provides key estimates for the C-n-R strategy \eqref{proof_eq_CnR}. In the terminology of this proposition, our final goal is to estimate the L\'{e}vy--Prokhorov metric between the laws of $Y$ and $\bar{Y}$, and the process $\hat{Y}$ corresponds to a controlled version of $\bar{Y}$. The first estimate \eqref{proof_eq_estimate-control} below corresponds to the ``control-step'', and the second estimate \eqref{proof_eq_estimate-reimburse} below corresponds to the ``reimbursement-step''. Recall that the total variation distance between two probability measures $\nu_1$ and $\nu_2$ on a Polish space $E$ is defined by
\begin{equation*}
	d_\TV(\nu_1,\nu_2):=\sup_{A\in\cB(E)}|\nu_1(A)-\nu_2(A)|,
\end{equation*}
whose topology is stronger than that of the weak convergence; $d_\LP(\nu_1,\nu_2)\leq d_\TV(\nu_1,\nu_2)$ for any $\nu_1,\nu_2\in\cP(E)$.


\begin{prop}\label{proof_prop_key-estimate}
Suppose that \cref{sol_assum_kernel} holds. Let $\Delta_0,\Delta_1,\Delta_2,\Delta_3\in(0,\infty)$, $m\in[1,\infty)$, $\bar{M}\in[m,\infty)$, $M\in[\bar{M},\infty]$, $\lambda\in(1,\infty)$ and $J\in[1,\infty)$ be given constants, and let $b,\bar{b}:\bR^n\to\bR^n$ and $\sigma,\bar{\sigma}:\bR^n\to\bR^{n\times d}$ be measurable maps. Assume that $\bar{b}$ and $\bar{\sigma}$ are uniformly continuous. Furthermore, assume that
\begin{equation*}
	\sup_{x\in\bR^n}|b(x)-\bar{b}(x)|^2\leq\Delta_0,\ \sup_{x\in\bR^n}|\sigma(x)-\bar{\sigma}(x)|^2\leq\Delta_0,\ \rho_{\bar{b}}\Big(\frac{\ep_{\bar{M}}}{r(\bar{M})}\Big)\leq\Delta_0,\ \rho_{\bar{\sigma}}\Big(\frac{\ep_{\bar{M}}}{r(\bar{M})}\Big)\leq\Delta_0.
\end{equation*}
Let a filtered probability space $(\Omega,\cF,\bF,\bP)$ and a $d$-dimensional $\bF$-Brownian motion $W$ be given. Fix $y\in\cH_\mu$. Suppose that $Y$ and $\bar{Y}$ are mild solutions of the following SEEs:
\begin{equation*}
	\begin{dcases}
	\diff Y_t(\theta)=-\theta Y_t(\theta)\,\diff t+b(\mu_M[Y_t])\,\diff t+\sigma(\mu_M[Y_t])\,\diff W_t,\ \ \theta\in\supp,\ t>0,\\
	Y_0(\theta)=y(\theta),\ \ \theta\in\supp,
	\end{dcases}
\end{equation*}
and
\begin{equation}\label{proof_eq_SEE-barY}
	\begin{dcases}
	\diff\bar{Y}_t(\theta)=-\theta\bar{Y}_t(\theta)\,\diff t+\bar{b}(\mu_{\bar{M}}[\bar{Y}_t])\,\diff t+\bar{\sigma}(\mu_{\bar{M}}[\bar{Y}_t])\,\diff W_t,\ \ \theta\in\supp,\ t>0,\\
	\bar{Y}_0(\theta)=y(\theta),\ \ \theta\in\supp,
	\end{dcases}
\end{equation}
respectively. Also, suppose that $\hat{Y}$ is a mild solution of the following controlled SEE
\begin{equation}\label{proof_eq_SEE-hatY}
	\begin{dcases}
	\diff\hat{Y}_t(\theta)=-\theta\hat{Y}_t(\theta)\,\diff t+\bar{b}(\mu_{\bar{M}}[\hat{Y}_t])\,\diff t+\bar{\sigma}(\mu_{\bar{M}}[\hat{Y}_t])\,\diff W_t+\lambda\mu_m[Y_t-\hat{Y}_t]\1_{[0,\tau]}(t)\,\diff t,\ \ \theta\in\supp,\ t>0,\\
	\hat{Y}_0(\theta)=y(\theta),\ \ \theta\in\supp,
	\end{dcases}
\end{equation}
with the stopping time $\tau$ given by
\begin{equation*}
	\tau=\inf\left\{t\geq0\relmiddle|
	\begin{aligned}
	\int^t_0\big|\mu_m[Y_s-\hat{Y}_s]\big|^2\,\diff s\geq\Delta_1,\ \int^t_0\int_\supp\theta r_m(\theta)|Y_s(\theta)-\hat{Y}_s(\theta)|^2\dmu\,\diff s\geq\Delta_2\\
	\text{or}\ \|Y_t-\hat{Y}_t\|^2_m\geq\Delta_3
	\end{aligned}
	\right\}.
\end{equation*}
Define
\begin{equation*}
	\zeta:=\inf\left\{t\geq0\relmiddle|\int^t_0\|Y_s\|^2_{\cV_\mu}\,\diff s\geq J\right\}\wedge J
\end{equation*}
and
\begin{equation*}
	\hat{\Omega}:=\left\{
	\begin{aligned}
	&\|Y_{t\wedge\tau\wedge\zeta}-\hat{Y}_{t\wedge\tau\wedge\zeta}\|^2_m+2\int^{t\wedge\tau\wedge\zeta}_0\int_\supp\theta r_m(\theta)|Y_s(\theta)-\hat{Y}_s(\theta)|^2\dmu\,\diff s\\
	&\hspace{5cm}+2(\lambda-1)\int^{t\wedge\tau\wedge\zeta}_0\big|\mu_m[Y_s-\hat{Y}_s]\big|^2\,\diff s\\
	&\leq9J\Big\{(1+R_m)\Delta_0+\rho_{\bar{b}}\big(\Delta_1+\ep_m\Delta_2\big)^{1/2}\Delta_1^{1/2}+R_m\rho_{\bar{\sigma}}\big(\Delta_1+\ep_m\Delta_2\big)\Big\}+\frac{1}{3}\Delta_3\\
	&\hspace{10cm}\text{for any $t\geq0$}
	\end{aligned}
	\right\}.
\end{equation*}
Then, it holds that
\begin{equation}\label{proof_eq_estimate-control}
	\bP(\Omega\setminus\hat{\Omega})\leq324JR_m\Delta^{-1}_3\Big\{\Delta_0+\rho_{\bar{\sigma}}\big(\Delta_1+\ep_m\Delta_2\big)\Big\}.
\end{equation}

Assume furthermore that $\bar{\sigma}(x)\bar{\sigma}(x)^\top\geq c_\UE I_{n\times n}$ for any $x\in\bR^n$ for some constant $c_\UE>0$ and that the SEE \eqref{proof_eq_SEE-barY} with the initial condition $y\in\cH_\mu$ satisfies strong existence and pathwise uniqueness. Then, it holds that
\begin{equation}\label{proof_eq_estimate-reimburse}
	d_\TV\big(\Law_\bP(\hat{Y}),\Law_\bP(\bar{Y})\big)\leq\frac{1}{2c^{1/2}_\UE}\lambda\Delta_1^{1/2}.
\end{equation}
\end{prop}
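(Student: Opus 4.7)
The two inequalities are essentially decoupled, and I will prove them separately. Estimate \eqref{proof_eq_estimate-control} will be obtained from a pathwise energy identity for $\|Y - \hat Y\|_m^2$ combined with a martingale maximal inequality, while estimate \eqref{proof_eq_estimate-reimburse} will follow from a Girsanov change of measure that removes the extra drift in \eqref{proof_eq_SEE-hatY} at a total-variation cost controlled by Pinsker's inequality.

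\textbf{Control step.} For \eqref{proof_eq_estimate-control} I would apply It\^o's formula in the Gelfand triplet $\cV_\mu \hookrightarrow \cH_\mu \hookrightarrow \cV_\mu^*$ to $\|Y_t - \hat Y_t\|_m^2$, a Hilbert norm equivalent to $\|\cdot\|_{\cH_\mu}$ by \cref{proof_lemm_mu}(i). The key identity $\langle y, v\rangle_m = \langle \mu_m[y], v\rangle$ for constants $v \in \bR^n$ turns the inner product of the control drift $-\lambda\mu_m[Y_s - \hat Y_s]\1_{[0,\tau]}$ against $Y_s - \hat Y_s$ into $-\lambda|\mu_m[Y_s - \hat Y_s]|^2\1_{[0,\tau]}$, the $-\theta$ multipliers produce the coercive $-\int_\supp\theta r_m|Y_s - \hat Y_s|^2\dmu$, and the stochastic integral yields a scalar local martingale $M$. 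A single Young step $2ab \leq 2a^2 + \tfrac12 b^2$ applied to the drift cross-term $2\langle\mu_m[Y_s - \hat Y_s], b(\mu_M[Y_s]) - \bar b(\mu_{\bar M}[\hat Y_s])\rangle$ absorbs $2|\mu_m[Y_s - \hat Y_s]|^2\1_{[0,\tau]}$ into the control and produces the coefficient $2(\lambda - 1)$ in the statement; the $R_m$ factor in front of $|\sigma - \bar\sigma|^2$ appears because $\int r_m\dmu = R_m$ is the $\|\cdot\|_m$-mass of a constant. (To get the square-root factor $\rho_{\bar b}^{1/2}\Delta_1^{1/2}$ rather than $\rho_{\bar b}$ itself, I would apply Cauchy--Schwarz in $ds$ to the $b$-cross-term before Young, pairing $\Delta_1^{1/2}$ against the $L^2(0, \tau\wedge\zeta)$-norm of the $\bar b$-difference.) Each of $|b(\mu_M[Y]) - \bar b(\mu_{\bar M}[\hat Y])|^2$ and $|\sigma(\mu_M[Y]) - \bar\sigma(\mu_{\bar M}[\hat Y])|^2$ splits into a coefficient-approximation part bounded by $\Delta_0$ and a modulus-of-continuity part $\rho_{\bar\varphi}(|\mu_M[Y] - \mu_{\bar M}[\hat Y]|^2)$; the triangle-type decomposition $|\mu_M[Y] - \mu_{\bar M}[\hat Y]|^2 \lesssim |\mu_M[Y] - \mu_{\bar M}[Y]|^2 + |\mu_{\bar M}[Y - \hat Y] - \mu_m[Y - \hat Y]|^2 + |\mu_m[Y - \hat Y]|^2$ combined with \cref{proof_lemm_mu}(iii) bounds the three pieces by $\tfrac{\ep_{\bar M}}{r(\bar M)}\|Y\|_{\cV_\mu}^2$, $\ep_m\int\theta r_m|Y - \hat Y|^2\dmu$ and $|\mu_m[Y - \hat Y]|^2$, respectively. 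Sub-additivity, concavity (Jensen), and sub-linearity $\rho_{\bar\varphi}(st) \leq (1+s)\rho_{\bar\varphi}(t)$ of the modulus, together with the hypothesis $\rho_{\bar\varphi}(\ep_{\bar M}/r(\bar M)) \leq \Delta_0$ and the stopping-time bounds $\int_0^\zeta\|Y\|_{\cV_\mu}^2 ds \leq J$, $\zeta \leq J$, $\int|\mu_m[Y - \hat Y]|^2 ds \leq \Delta_1$, $\int\int\theta r_m|Y - \hat Y|^2\dmu\, ds \leq \Delta_2$ built into $\tau$ and $\zeta$, then collapse the deterministic remainder to exactly $9J\{(1+R_m)\Delta_0 + \rho_{\bar b}(\Delta_1 + \ep_m\Delta_2)^{1/2}\Delta_1^{1/2} + R_m\rho_{\bar\sigma}(\Delta_1 + \ep_m\Delta_2)\}$. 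Hence $\Omega \setminus \hat\Omega \subseteq \{\sup_t|M_{t\wedge\tau\wedge\zeta}| > \tfrac13\Delta_3\}$, and using $|\mu_m[\cdot]|^2 \leq R_m\|\cdot\|_m^2$ from \cref{proof_lemm_mu}(ii) with the bound $\|Y - \hat Y\|_m^2 \leq \Delta_3$ on $[0, \tau]$, the quadratic variation satisfies $\langle M\rangle_{\tau\wedge\zeta} \leq 4R_m\Delta_3\int|\sigma - \bar\sigma|^2\,ds$; a reprise of the $\rho$-bookkeeping yields $\bE[\langle M\rangle_{\tau\wedge\zeta}] \lesssim JR_m\Delta_3(\Delta_0 + \rho_{\bar\sigma}(\Delta_1 + \ep_m\Delta_2))$, and Doob's $L^2$ maximal inequality plus Markov then produce the $\Delta_3^{-1}$ scaling of the statement.

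\textbf{Reimbursement step and main obstacle.} For \eqref{proof_eq_estimate-reimburse}, uniform ellipticity $\bar\sigma\bar\sigma^\top \geq c_\UE I$ gives the right pseudo-inverse $\bar\sigma^+ := \bar\sigma^\top(\bar\sigma\bar\sigma^\top)^{-1}$ satisfying $|\bar\sigma^+ v|^2 \leq c_\UE^{-1}|v|^2$ for all $v \in \bR^n$. I will set $\phi_t := \bar\sigma^+(\mu_{\bar M}[\hat Y_t])\,\lambda\mu_m[Y_t - \hat Y_t]\1_{[0,\tau]}(t)$, which is $\bF$-adapted and obeys $\bar\sigma(\mu_{\bar M}[\hat Y_t])\phi_t = \lambda\mu_m[Y_t - \hat Y_t]\1_{[0,\tau]}(t)$ together with $\int_0^\infty|\phi_s|^2 ds \leq c_\UE^{-1}\lambda^2\Delta_1$ deterministically by the definition of $\tau$. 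Novikov's condition is then trivial, so $\tilde W_t := W_t + \int_0^t\phi_s ds$ is a $d$-dimensional Brownian motion under $\tilde\bP$ with $\frac{d\tilde\bP}{d\bP} = \exp(-\int\phi\, dW - \tfrac12\int|\phi|^2 ds)$; substituting $dW = d\tilde W - \phi\,ds$ into \eqref{proof_eq_SEE-hatY} cancels the control drift exactly, so $\hat Y$ is a mild solution of the uncontrolled \eqref{proof_eq_SEE-barY} on $(\Omega, \cF, \bF, \tilde\bP)$ driven by $\tilde W$. The assumed uniqueness in law for \eqref{proof_eq_SEE-barY} (implied by strong existence and pathwise uniqueness via Yamada--Watanabe) gives $\Law_{\tilde\bP}(\hat Y) = \Law_\bP(\bar Y)$, so $d_\TV(\Law_\bP(\hat Y), \Law_\bP(\bar Y)) \leq d_\TV(\bP, \tilde\bP)$, and Pinsker's inequality together with $\KL(\bP\|\tilde\bP) = \tfrac12\bE[\int|\phi|^2 ds] \leq \lambda^2\Delta_1/(2c_\UE)$ yields $d_\TV(\bP, \tilde\bP) \leq \lambda\Delta_1^{1/2}/(2c_\UE^{1/2})$. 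The hardest step will be the bookkeeping in the control step: I must coordinate the three scales $m \leq \bar M \leq M$ through both the $b$- and $\sigma$-differences and combine sub-additivity, concavity, and sub-linearity of $\rho_{\bar\varphi}$ at the correct points so that the final prefactor collapses to exactly $9J$ and the mixed square-root factor $\rho_{\bar b}(\Delta_1 + \ep_m\Delta_2)^{1/2}\Delta_1^{1/2}$ emerges with the stated constant; the martingale step and the Girsanov reimbursement are then technical but routine once the deterministic estimate is settled.
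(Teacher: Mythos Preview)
Your proposal is correct and follows essentially the same approach as the paper: the control step via It\^o's formula for $\|Y-\hat Y\|_m^2$, the three-scale split $m\leq\bar M\leq M$ combined with concavity and sub-additivity of the moduli $\rho_{\bar b},\rho_{\bar\sigma}$, and Doob's inequality for the martingale remainder; the reimbursement step via Girsanov to absorb the control drift into a new Brownian motion. The only cosmetic difference is that the paper invokes the strong-solution functional $\Psi$ (from strong existence and pathwise uniqueness) to reduce to $d_\TV(\Law_\bP(\hat W),\Law_\bP(W))$ and then cites \cite{BuKuSc20}, whereas you use uniqueness in law to identify $\Law_{\tilde\bP}(\hat Y)=\Law_\bP(\bar Y)$ and bound $d_\TV(\bP,\tilde\bP)$ directly by Pinsker---both routes give the same constant $\tfrac{1}{2c_\UE^{1/2}}\lambda\Delta_1^{1/2}$.
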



\begin{proof}
First, we prove the estimate \eqref{proof_eq_estimate-control} which corresponds to the ``control-step'' of the C-n-R strategy \eqref{proof_eq_CnR}. Note that, by the definition of the mild solutions, we know that
\begin{equation*}
	\sup_{t\in[0,T]}\Big\{\|Y_t\|_{\cH_\mu}+\|\hat{Y}_t\|_{\cH_\mu}\Big\}+\int^T_0\Big\{\|Y_t\|^2_{\cV_\mu}+\|\hat{Y}_t\|^2_{\cV_\mu}\Big\}<\infty\ \text{a.s.}
\end{equation*}
and
\begin{equation*}
	\int^T_0\Big\{\big|b(\mu_M[Y_t])\big|+\big|\bar{b}(\mu_{\bar{M}}[\hat{Y}_t])\big|+\big|\sigma(\mu_M[Y_t])\big|^2+\big|\bar{\sigma}(\mu_{\bar{M}}[\hat{Y}_t])\big|^2+\big|\mu_m[Y_t-\hat{Y}_t]\big|\Big\}\,\diff t<\infty\ \text{a.s.}
\end{equation*}
for any $T>0$. By using It\^{o}'s formula, for each $\theta\in\supp$, we have
\begin{align*}
	\diff |Y_t(\theta)-\hat{Y}_t(\theta)|^2&=-2\theta|Y_t(\theta)-\hat{Y}_t(\theta)|^2\,\diff t-2\lambda\Bigl\langle Y_t(\theta)-\hat{Y}_t(\theta),\mu_m[Y_t-\hat{Y}_t]\Big\rangle\1_{[0,\tau]}(t)\,\diff t\\
	&\hspace{0.5cm}+2\Big\langle Y_t(\theta)-\hat{Y}_t(\theta),b(\mu_M[Y_t])-\bar{b}(\mu_{\bar{M}}[\hat{Y}_t])\Big\rangle\,\diff t+\big|\sigma(\mu_M[Y_t])-\bar{\sigma}(\mu_{\bar{M}}[\hat{Y}_t])\big|^2\,\diff t\\
	&\hspace{0.5cm}+2\Big\langle Y_t(\theta)-\hat{Y}_t(\theta),\Big(\sigma(\mu_M[Y_t])-\bar{\sigma}(\mu_{\bar{M}}[\hat{Y}_t])\Big)\diff W_t\Big\rangle.
\end{align*}
We integrate both sides with respect to $r_m(\theta)\dmu$. Note that
\begin{align*}
	&\int_\supp\Big(\int^T_0\big|Y_t(\theta)-\hat{Y}_t(\theta)\big|^2\,\big|\sigma(\mu_M[Y_t])-\bar{\sigma}(\mu_{\bar{M}}[\hat{Y}_t])\big|^2\,\diff t\Big)^{1/2}r_m(\theta)\dmu\\
	&\leq\int_\supp\Big(\int^T_0\big|Y_t(\theta)-\hat{Y}_t(\theta)\big|^2\,\big|\sigma(\mu_M[Y_t])-\bar{\sigma}(\mu_{\bar{M}}[\hat{Y}_t])\big|^2\,\diff t\Big)^{1/2}\frac{r(\theta)}{r(m)}\dmu\\
	&\leq\frac{1}{r(m)}\Big(\int_\supp r(\theta)\dmu\Big)^{1/2}\Big(\int^T_0\|Y_t-\hat{Y}_t\|^2_{\cH_\mu}\big|\sigma(\mu_M[Y_t])-\bar{\sigma}(\mu_{\bar{M}}[\hat{Y}_t])\big|^2\,\diff t\Big)^{1/2}\\
	&\leq\frac{1}{r(m)}\Big(\int_\supp r(\theta)\dmu\Big)^{1/2}\Big(\int^T_0\big|\sigma(\mu_M[Y_t])-\bar{\sigma}(\mu_{\bar{M}}[\hat{Y}_t])\big|^2\,\diff t\Big)^{1/2}\sup_{t\in[0,T]}\|Y_t-\hat{Y}_t\|_{\cH_\mu}\\
	&<\infty\ \text{a.s.}
\end{align*}
for any $T>0$. Thus, we can use the stochastic Fubini theorem (cf.\ \cite{Ve12}) and obtain
\begin{align*}
	&\diff\int_\supp r_m(\theta)|Y_t(\theta)-\hat{Y}_t(\theta)|^2\dmu\\
	&=-2\int_\supp\theta r_m(\theta)|Y_t(\theta)-\hat{Y}_t(\theta)|^2\dmu\,\diff t\\
	&\hspace{0.5cm}-2\lambda\Big\langle\int_\supp r_m(\theta)(Y_t(\theta)-\hat{Y}_t(\theta))\dmu,\mu_m[Y_t-\hat{Y}_t]\Big\rangle\1_{[0,\tau]}(t)\,\diff t\\
	&\hspace{0.5cm}+2\Big\langle\int_\supp r_m(\theta)(Y_t(\theta)-\hat{Y}_t(\theta))\dmu,b(\mu_M[Y_t])-\bar{b}(\mu_{\bar{M}}[\hat{Y}_t])\Big\rangle\,\diff t\\
	&\hspace{0.5cm}+\int_\supp r_m(\theta)\dmu\big|\sigma(\mu_M[Y_t])-\bar{\sigma}(\mu_{\bar{M}}[\hat{Y}_t])\big|^2\,\diff t\\
	&\hspace{0.5cm}+2\Big\langle\int_\supp r_m(\theta)(Y_t(\theta)-\hat{Y}_t(\theta))\dmu,\Big(\sigma(\mu_M[Y_t])-\bar{\sigma}(\mu_{\bar{M}}[\hat{Y}_t])\Big)\diff W_t\Big\rangle.
\end{align*}
Noting the notations $R_m:=\int_\supp r_m(\theta)\dmu$, $\|y\|_m:=\big(\int_\supp r_m(\theta)|y(\theta)|^2\dmu\big)^{1/2}$ and $\mu_m[y]:=\int_\supp r_m(\theta)y(\theta)\dmu$ for $y\in\cH_\mu$, we obtain
\begin{equation}\label{proof_eq_b-sigma-W}
\begin{split}
	&\|Y_{t\wedge\tau\wedge\zeta}-\hat{Y}_{t\wedge\tau\wedge\zeta}\|^2_m+2\int^{t\wedge\tau\wedge\zeta}_0\int_\supp\theta r_m(\theta)|Y_s(\theta)-\hat{Y}_s(\theta)|^2\dmu\,\diff s+2\lambda\int^{t\wedge\tau\wedge\zeta}_0\big|\mu_m[Y_s-\hat{Y}_s]\big|^2\,\diff s\\
	&=2I^b_t+R_mI^\sigma_t+2I^W_t
\end{split}
\end{equation}
for any $t\geq0$ a.s., where
\begin{align*}
	&I^b_t:=\int^{t\wedge\tau\wedge\zeta}_0\Big\langle\mu_m[Y_s-\hat{Y}_s],b(\mu_M[Y_s])-\bar{b}(\mu_{\bar{M}}[\hat{Y}_s])\Big\rangle\,\diff s,\\
	&I^\sigma_t:=\int^{t\wedge\tau\wedge\zeta}_0\big|\sigma(\mu_M[Y_s])-\bar{\sigma}(\mu_{\bar{M}}[\hat{Y}_s])\big|^2\,\diff s,\\
	&I^W_t:=\int^{t\wedge\tau\wedge\zeta}_0\Big\langle\mu_m[Y_s-\hat{Y}_s],\Big(\sigma(\mu_M[Y_s])-\bar{\sigma}(\mu_{\bar{M}}[\hat{Y}_s])\Big)\diff W_s\Big\rangle.
\end{align*}
Fix $t\geq0$. First, we estimate $I^b_t$. Observe that
\begin{align*}
	I^b_t&=\int^{t\wedge\tau\wedge\zeta}_0\Big\langle\mu_m[Y_s-\hat{Y}_s],b(\mu_M[Y_s])-\bar{b}(\mu_M[Y_s])\Big\rangle\,\diff s+\int^{t\wedge\tau\wedge\zeta}_0\Big\langle\mu_m[Y_s-\hat{Y}_s],\bar{b}(\mu_M[Y_s])-\bar{b}(\mu_{\bar{M}}[Y_s])\Big\rangle\,\diff s\\
	&\hspace{0.5cm}+\int^{t\wedge\tau\wedge\zeta}_0\Big\langle\mu_m[Y_s-\hat{Y}_s],\bar{b}(\mu_{\bar{M}}[Y_s])-\bar{b}(\mu_{\bar{M}}[\hat{Y}_s])\Big\rangle\,\diff s\\
	&\leq\int^{t\wedge\tau\wedge\zeta}_0\big|\mu_m[Y_s-\hat{Y}_s]\big|^2\,\diff s+\frac{1}{2}\int^J_0\big|b(\mu_M[Y_s])-\bar{b}(\mu_M[Y_s])\big|^2\,\diff s+\frac{1}{2}\int^\zeta_0\big|\bar{b}(\mu_M[Y_s])-\bar{b}(\mu_{\bar{M}}[Y_s])\big|^2\,\diff s\\
	&\hspace{0.5cm}+\int^{\tau\wedge J}_0\big|\bar{b}(\mu_{\bar{M}}[Y_s])-\bar{b}(\mu_{\bar{M}}[\hat{Y}_s])\big|\cdot\big|\mu_m[Y_s-\hat{Y}_s]\big|\,\diff s.
\end{align*}
Since $\sup_{x\in\bR^n}|b(x)-\bar{b}(x)|^2\leq\Delta_0$, we have
\begin{equation*}
	\int^J_0\big|b(\mu_M[Y_s])-\bar{b}(\mu_M[Y_s])\big|^2\,\diff s\leq J\Delta_0.
\end{equation*}
Next, noting that $|\bar{b}(x)-\bar{b}(\bar{x})|^2\leq\rho_{\bar{b}}\big(|x-\bar{x}|^2\big)$ for any $x,\bar{x}\in\bR^n$ and that $\rho_{\bar{b}}:[0,\infty)\to[0,\infty)$ is concave, we have
\begin{equation*}
	\int^\zeta_0\big|\bar{b}(\mu_M[Y_s])-\bar{b}(\mu_{\bar{M}}[Y_s])\big|^2\,\diff s\leq\int^\zeta_0\rho_{\bar{b}}\big(\big|\mu_M[Y_s]-\mu_{\bar{M}}[Y_s]\big|^2\big)\,\diff s\leq\zeta\rho_{\bar{b}}\Big(\frac{1}{\zeta}\int^\zeta_0\big|\mu_M[Y_s]-\mu_{\bar{M}}[Y_s]\big|^2\,\diff s\Big),
\end{equation*}
where we used Jensen's inequality in the second inequality. Since $M\geq\bar{M}$ and $\bar{M}<\infty$, by \cref{proof_lemm_mu} (iii) and the definition of the stopping time $\zeta$, we have
\begin{equation*}
	\int^\zeta_0\big|\mu_M[Y_s]-\mu_{\bar{M}}[Y_s]\big|^2\,\diff s\leq\frac{\ep_{\bar{M}}}{r(\bar{M})}\int^\zeta_0\|Y_s\|^2_{\cV_\mu}\,\diff s\leq\frac{\ep_{\bar{M}}J}{r(\bar{M})}.
\end{equation*}
Then, noting that $\zeta\leq J$ and $\rho_{\bar{b}}\big(\frac{\ep_{\bar{M}}}{r(\bar{M})}\big)\leq\Delta_0$, by the inequality $\rho_{\bar{b}}(st)\leq(1+s)\rho_{\bar{b}}(t)$ for $s,t\geq0$, we get
\begin{equation*}
	\int^\zeta_0\big|\bar{b}(\mu_M[Y_s])-\bar{b}(\mu_{\bar{M}}[Y_s])\big|^2\,\diff s\leq\zeta\rho_{\bar{b}}\Big(\frac{\ep_{\bar{M}}J}{r(\bar{M})\zeta}\Big)\leq\zeta\Big(1+\frac{J}{\zeta}\Big)\rho_{\bar{b}}\Big(\frac{\ep_{\bar{M}}}{r(\bar{M})}\Big)\leq2J\Delta_0.
\end{equation*}
Lastly, observe that
\begin{align*}
	&\int^{\tau\wedge J}_0\big|\bar{b}(\mu_{\bar{M}}[Y_s])-\bar{b}(\mu_{\bar{M}}[\hat{Y}_s])\big|\cdot\big|\mu_m[Y_s-\hat{Y}_s]\big|\,\diff s\\
	&\leq\Big(\int^{\tau\wedge J}_0\big|\bar{b}(\mu_{\bar{M}}[Y_s])-\bar{b}(\mu_{\bar{M}}[\hat{Y}_s])\big|^2\,\diff s\Big)^{1/2}\Big(\int^\tau_0\big|\mu_m[Y_s-\hat{Y}_s]\big|^2\,\diff s\Big)^{1/2}\\
	&\leq\Big(\int^{\tau\wedge J}_0\big|\bar{b}(\mu_{\bar{M}}[Y_s])-\bar{b}(\mu_{\bar{M}}[\hat{Y}_s])\big|^2\,\diff s\Big)^{1/2}\Delta_1^{1/2},
\end{align*}
where the last inequality follows from the definition of the stopping time $\tau$. By using Jensen's inequality and the inequality $\rho_{\bar{b}}(st)\leq(1+s)\rho_{\bar{b}}(t)$ for $s,t\geq0$, we have
\begin{align*}
	\int^{\tau\wedge J}_0\big|\bar{b}(\mu_{\bar{M}}[Y_s])-\bar{b}(\mu_{\bar{M}}[\hat{Y}_s])\big|^2\,\diff s&\leq\int^{\tau\wedge J}_0\rho_{\bar{b}}\big(\big|\mu_{\bar{M}}[Y_s-\hat{Y}_s]\big|^2\big)\,\diff s\\
	&\leq(\tau\wedge J)\rho_{\bar{b}}\Big(\frac{1}{\tau\wedge J}\int^{\tau\wedge J}_0\big|\mu_{\bar{M}}[Y_s-\hat{Y}_s]\big|^2\,\diff s\Big)\\
	&\leq(\tau\wedge J)\Big(1+\frac{2}{\tau\wedge J}\Big)\rho_{\bar{b}}\Big(\frac{1}{2}\int^{\tau\wedge J}_0\big|\mu_{\bar{M}}[Y_s-\hat{Y}_s]\big|^2\,\diff s\Big)\\
	&\leq3J\rho_{\bar{b}}\Big(\frac{1}{2}\int^\tau_0\big|\mu_{\bar{M}}[Y_s-\hat{Y}_s]\big|^2\,\diff s\Big),
\end{align*}
where in the last inequality we used $J\geq1$. Since $m\leq\bar{M}<\infty$, by \cref{proof_lemm_mu} (iii) and the definition of the stopping time $\tau$, we have
\begin{align*}
	\int^\tau_0\big|\mu_{\bar{M}}[Y_s-\hat{Y}_s]\big|^2\,\diff s&\leq2\int^\tau_0\big|\mu_m[Y_s-\hat{Y}_s]\big|^2\,\diff s+2\ep_m\int^\tau_0\int_\supp\theta r_m(\theta)\big|Y_s(\theta)-\hat{Y}_s(\theta)\big|^2\dmu\,\diff s\\
	&\leq2\Delta_1+2\ep_m\Delta_2.
\end{align*}
Hence,
\begin{equation*}
	\int^{\tau\wedge J}_0\big|\bar{b}(\mu_{\bar{M}}[Y_s])-\bar{b}(\mu_{\bar{M}}[\hat{Y}_s])\big|^2\,\diff s\leq3J\rho_{\bar{b}}\big(\Delta_1+\ep_m\Delta_2\big).
\end{equation*}
Summarizing the above estimates, we get
\begin{equation}\label{proof_eq_b}
	I^b_t\leq\int^{t\wedge\tau\wedge\zeta}_0\big|\mu_m[Y_s-\hat{Y}_s]\big|^2\,\diff s+\frac{3}{2}J\Delta_0+(3J)^{1/2}\rho_{\bar{b}}\big(\Delta_1+\ep_m\Delta_2\big)^{1/2}\Delta_1^{1/2}
\end{equation}
for any $t\geq0$ a.s.

Next, we estimate $I^\sigma_t$. Observe that
\begin{align*}
	I^\sigma_t&\leq3\int^J_0\big|\sigma(\mu_M[Y_s])-\bar{\sigma}(\mu_M[Y_s])\big|^2\,\diff s+3\int^\zeta_0\big|\bar{\sigma}(\mu_M[Y_s])-\bar{\sigma}(\mu_{\bar{M}}[Y_s])\big|^2\,\diff s\\
	&\hspace{1cm}+3\int^{\tau\wedge J}_0\big|\bar{\sigma}(\mu_{\bar{M}}[Y_s])-\bar{\sigma}(\mu_{\bar{M}}[\hat{Y}_s])\big|^2\,\diff s.
\end{align*}
By the same arguments as above, we have
\begin{equation*}
	\int^J_0\big|\sigma(\mu_M[Y_s])-\bar{\sigma}(\mu_M[Y_s])\big|^2\,\diff s\leq J\Delta_0,\ \int^\zeta_0\big|\bar{\sigma}(\mu_M[Y_s])-\bar{\sigma}(\mu_{\bar{M}}[Y_s])\big|^2\,\diff s\leq2J\Delta_0,
\end{equation*}
and
\begin{equation*}
	\int^{\tau\wedge J}_0\big|\bar{\sigma}(\mu_{\bar{M}}[Y_s])-\bar{\sigma}(\mu_{\bar{M}}[\hat{Y}_s])\big|^2\,\diff s\leq3J\rho_{\bar{\sigma}}\big(\Delta_1+\ep_m\Delta_2\big).
\end{equation*}
Thus,
\begin{equation}\label{proof_eq_sigma}
	I^\sigma_t\leq9J\Big\{\Delta_0+\rho_{\bar{\sigma}}\big(\Delta_1+\ep_m\Delta_2\big)\Big\}
\end{equation}
for any $t\geq0$ a.s.

By \eqref{proof_eq_b-sigma-W}, \eqref{proof_eq_b} and \eqref{proof_eq_sigma}, together with the assumption $J\geq1$, we obtain
\begin{align*}
	&\|Y_{t\wedge\tau\wedge\zeta}-\hat{Y}_{t\wedge\tau\wedge\zeta}\|^2_m+2\int^{t\wedge\tau\wedge\zeta}_0\int_\supp\theta r_m(\theta)|Y_s(\theta)-\hat{Y}_s(\theta)|^2\dmu\,\diff s+2(\lambda-1)\int^{t\wedge\tau\wedge\zeta}_0\big|\mu_m[Y_s-\hat{Y}_s]\big|^2\,\diff s\\
	&\leq2\Big\{\frac{3}{2}J\Delta_0+(3J)^{1/2}\rho_{\bar{b}}\big(\Delta_1+\ep_m\Delta_2\big)^{1/2}\Delta_1^{1/2}\Big\}+9JR_m\Big\{\Delta_0+\rho_{\bar{\sigma}}\big(\Delta_1+\ep_m\Delta_2\big)\Big\}+2I^W_t\\
	&\leq9J\Big\{(1+R_m)\Delta_0+\rho_{\bar{b}}\big(\Delta_1+\ep_m\Delta_2\big)^{1/2}\Delta_1^{1/2}+R_m\rho_{\bar{\sigma}}\big(\Delta_1+\ep_m\Delta_2\big)\Big\}+2I^W_t
\end{align*}
for any $t\geq0$ a.s. This implies that
\begin{equation*}
	\bP(\Omega\setminus\hat{\Omega})\leq\bP\Big(\sup_{t\geq0}|I^W_t|>\frac{1}{6}\Delta_3\Big)=\bP\Big(\sup_{t\in[0,J]}|I^W_t|>\frac{1}{6}\Delta_3\Big).
\end{equation*}
By using Doob's martingale inequality and the estimate \eqref{proof_eq_sigma}, we have
\begin{align*}
	\bP\Big(\sup_{t\in[0,J]}|I^W_t|>\frac{1}{6}\Delta_3\Big)&\leq36\Delta_3^{-2}\bE\big[|I^W_J|^2\big]\\
	&\leq36\Delta_3^{-2}\bE\Big[\int^{\tau\wedge\zeta}_0\big|\mu_m[Y_s-\hat{Y}_s]\big|^2\cdot\big|\sigma(\mu_M[Y_s])-\bar{\sigma}(\mu_{\bar{M}}[\hat{Y}_s])\big|^2\,\diff s\Big]\\
	&\leq36\Delta_3^{-2}\bE\Big[I^\sigma_J\sup_{s\geq0}\big|\mu_m[Y_{s\wedge\tau}-\hat{Y}_{s\wedge\tau}]\big|^2\Big]\\
	&\leq324J\Delta_3^{-2}\Big\{\Delta_0+\rho_{\bar{\sigma}}\big(\Delta_1+\ep_m\Delta_2\big)\Big\}\bE\Big[\sup_{s\geq0}\big|\mu_m[Y_{s\wedge\tau}-\hat{Y}_{s\wedge\tau}]\big|^2\Big].
\end{align*}
Furthermore, by \cref{proof_lemm_mu} (ii) and the definition of the stopping time $\tau$, we have
\begin{equation*}
	\bE\Big[\sup_{s\geq0}\big|\mu_m[Y_{s\wedge\tau}-\hat{Y}_{s\wedge\tau}]\big|^2\Big]\leq R_m\bE\Big[\sup_{s\geq0}\|Y_{s\wedge\tau}-\hat{Y}_{s\wedge\tau}\|^2_m\Big]\leq R_m\Delta_3.
\end{equation*}
Then, we obtain
\begin{equation*}
	\bP(\Omega\setminus\hat{\Omega})\leq324JR_m\Delta_3^{-1}\Big\{\Delta_0+\rho_{\bar{\sigma}}\big(\Delta_1+\ep_m\Delta_2\big)\Big\},
\end{equation*}
and thus the first estimate \eqref{proof_eq_estimate-control} holds.

Next, we prove the second estimate \eqref{proof_eq_estimate-reimburse} which corresponds to the ``reimbursement-step'' of the C-n-R strategy \eqref{proof_eq_CnR}. Here, we assume that $\bar{\sigma}(x)\bar{\sigma}(x)^\top\geq c_\UE I_{n\times n}$ for any $x\in\bR^n$ for some constant $c_\UE>0$ and that the SEE \eqref{proof_eq_SEE-barY} with the initial condition $y\in\cH_\mu$ satisfies strong existence and pathwise uniqueness. Observe that the mild solution $\hat{Y}$ of the controlled SEE \eqref{proof_eq_SEE-hatY} solves the SEE
\begin{equation*}
	\begin{dcases}
	\diff\hat{Y}_t(\theta)=-\theta\hat{Y}_t(\theta)\,\diff t+\bar{b}(\mu_{\bar{M}}[\hat{Y}_t])\,\diff t+\bar{\sigma}(\mu_{\bar{M}}[\hat{Y}_t])\,\diff \hat{W}_t,\ \ \theta\in\supp,\ t>0,\\
	\hat{Y}_0(\theta)=y(\theta),\ \ \theta\in\supp,
	\end{dcases}
\end{equation*}
where
\begin{equation*}
	\hat{W}_t:=W_t+\int^t_0v_s\,\diff s,\ \ v_t:=\lambda\bar{\sigma}(\mu_{\bar{M}}[\hat{Y}_t])^\dagger\mu_m[Y_t-\hat{Y}_t]\1_{[0,\tau]}(t),\ t\geq0,
\end{equation*}
with the notation that $\bar{\sigma}(x)^\dagger=\bar{\sigma}(x)^\top(\bar{\sigma}(x)\bar{\sigma}(x)^\top)^{-1}$ for $x\in\bR^n$. Observe that, for each $x\in\bR^n$, the matrix $\bar{\sigma}(x)^\dagger\in\bR^{d\times n}$ is the Moore--Penrose inverse of $\bar{\sigma}(x)\in\bR^{n\times d}$ and satisfies $\bar{\sigma}(x)\bar{\sigma}(x)^\dagger=I_{n\times n}$. By the uniform ellipticity condition for $\bar{\sigma}$, we see that the operator norm of the matrix $\bar{\sigma}(x)^\dagger$ satisfies $\|\bar{\sigma}(x)^\dagger\|_\op\leq c^{-1/2}_\UE$ for any $x\in\bR^n$. Then, by the definition of the stopping time $\tau$, we have
\begin{equation*}
	\int^\infty_0|v_t|^2\,\diff t\leq\frac{1}{c_\UE}\lambda^2\int^\tau_0\big|\mu_m[Y_t-\hat{Y}_t]\big|^2\,\diff t\leq\frac{1}{c_\UE}\lambda^2\Delta_1\ \ \text{a.s.}
\end{equation*}
Thus, by means of Novikov's condition and the Girsanov theorem, we see that the process $\hat{W}$ is a $d$-dimensional Brownian motion under a new probability measure $\hat{\bP}$ on $(\Omega,\cF)$ which is equivalent to $\bP$. This means that $(\hat{Y},\hat{W},\Omega,\cF,\bF,\hat{\bP})$ is a weak solution of the SEE \eqref{proof_eq_SEE-barY} with the initial condition $y$. By strong existence and pathwise uniqueness for the SEE \eqref{proof_eq_SEE-barY}, there exists a measurable map $\Psi:C(\bR_+;\bR^d)\to\Lambda$ such that $\hat{Y}=\Psi(\hat{W})$ and $\bar{Y}=\Psi(W)$ in $\Lambda$ $\bP$-a.s. This implies that
\begin{equation*}
	d_\TV\big(\Law_\bP(\hat{Y}),\Law_\bP(\bar{Y})\big)=d_\TV\big(\Law_\bP(\Psi(\hat{W})),\Law_\bP(\Psi(W))\big)\leq d_\TV\big(\Law_\bP(\hat{W}),\Law_\bP(W)\big).
\end{equation*}
Furthermore, by \cite[Lemma A.1 and Theorem A.2]{BuKuSc20}, together with the above estimate, we have
\begin{equation*}
	d_\TV\big(\Law_\bP(\hat{W}),\Law_\bP(W)\big)\leq\frac{1}{2}\bE\Big[\int^\infty_0|v_t|^2\,\diff t\Big]^{1/2}\leq\frac{1}{2c^{1/2}_\UE}\lambda\Delta^{1/2}_1.
\end{equation*}
Thus, the second estimate \eqref{proof_eq_estimate-reimburse} holds. This completes the proof of \cref{proof_prop_key-estimate}.
\end{proof}

\subsection{Proof of \cref{main_theo_main}}\label{proof-main}

We can now complete the proof of \cref{main_theo_main}. We demonstrate the proof of the theorem for the singular case (i). The regular case (ii) can be proved by a similar (and easier) way; see \cref{proof-remark-regular} below. Suppose that $b:\bR^n\to\bR^n$ and $\sigma:\bR^n\to\bR^{n\times d}$ are uniformly continuous and satisfy $\sigma(x)\sigma(x)^\top\geq c_\UE I_{n\times n}$ for any $x\in\bR^n$ for some constant $c_\UE>0$. Without loss of generality, we may assume that $\rho_b(t)+\rho_\sigma(t)>0$ for any $t>0$; otherwise both $b$ and $\sigma$ become constants, and thus there are nothing to prove. Furthermore, assume that the kernel $K$ is singular and satisfies
\begin{equation*}
	\liminf_{m\to\infty}R_m\frac{\rho_\sigma(\ep_m^2)}{\ep_m}=0.
\end{equation*}
Here, by the singularity of the kernel $K$, we have $\ep_m>0$ for any $m\in[1,\infty)$. Recall that $\ep_m\downarrow0$ and $R_m\uparrow\infty$ monotonically as $m\to\infty$. Then, there exists an increasing sequence $\{m_k\}_{k\in\bN}\subset[1,\infty)$ such that $\big\{R_{m_k}\frac{\rho_\sigma(\ep_{m_k}^2)}{\ep_{m_k}}\big\}_{k\in\bN}$ is decreasing, that
\begin{equation*}
	\ep_{m_k}\leq1\ \text{and}\ \rho_b(\ep_{m_k}^2)^{1/2}+R_{m_k}\frac{\rho_\sigma(\ep_{m_k}^2)}{\ep_{m_k}}\leq108^{-4}
\end{equation*}
for any $k\in\bN$, and that
\begin{equation*}
	\lim_{k\to\infty}\Big(\ep_{m_k}+\rho_b(\ep_{m_k}^2)^{1/2}+R_{m_k}\frac{\rho_\sigma(\ep_{m_k}^2)}{\ep_{m_k}}\Big)=0.
\end{equation*}
For each $k\in\bN$, we define $\Delta_{0,k},\Delta_{1,k},\Delta_{2,k},\Delta_{3,k}\in(0,\infty)$, $\lambda_k\in(1,\infty)$ and $J_k\in[1,\infty)$ by
\begin{equation*}
	\Delta_{0,k}=\frac{\rho_b(\ep_{m_k}^2)^{1/2}\ep_{m_k}+R_{m_k}\rho_\sigma(\ep_{m_k}^2)}{1+R_{m_k}},
\end{equation*}
\begin{equation*}
	\Delta_{1,k}=\frac{\ep_{m_k}^2}{2},\ \Delta_{2,k}=\frac{\ep_{m_k}}{2},\ \Delta_{3,k}=\Big(\rho_b(\ep_{m_k}^2)^{1/2}+R_{m_k}\frac{\rho_\sigma(\ep_{m_k}^2)}{\ep_{m_k}}\Big)^{1/2}\ep_{m_k},
\end{equation*}
and
\begin{equation*}
	\lambda_k=1+\Big(\rho_b(\ep_{m_k}^2)^{1/2}+R_{m_k}\frac{\rho_\sigma(\ep_{m_k}^2)}{\ep_{m_k}}\Big)^{1/2}\ep_{m_k}^{-1},\ J_k=\Big(\rho_b(\ep_{m_k}^2)^{1/2}+R_{m_k}\frac{\rho_\sigma(\ep_{m_k}^2)}{\ep_{m_k}}\Big)^{-1/4}.
\end{equation*}
Note that $\{\Delta_{i,k}\}_{k\in\bN}$, $i=0,1,2,3$, are decreasing sequences of positive numbers. Furthermore, noting \eqref{main_eq_R-ep-lim}, we can take an increasing sequence $\{M_k\}_{k\in\bN}$ with $\lim_{k\to\infty}M_k=\infty$ such that $M_k\in[m_k,\infty)$ and
\begin{equation*}
	\rho_b\Big(\frac{\ep_{M_k}}{r(M_k)}\Big)\leq\Delta_{0,k},\ \rho_\sigma\Big(\frac{\ep_{M_k}}{r(M_k)}\Big)\leq\Delta_{0,k},
\end{equation*}
for any $k\in\bN$. Then, for each $k\in\bN$, by taking convolutions of $b$ and $\sigma$ with smooth mollifiers one can construct maps $b_k:\bR^n\to\bR^n$ and $\sigma_k:\bR^n\to\bR^{n\times d}$ such that
\begin{itemize}
\item
$\displaystyle\sup_{x\in\bR^n}|b(x)-b_k(x)|^2\leq\frac{\Delta_{0,k}}{2}$ and $\displaystyle\sup_{x\in\bR^n}|\sigma(x)-\sigma_k(x)|^2\leq\frac{\Delta_{0,k}}{2}$;
\item
$b_k$ and $\sigma_k$ are uniformly continuous and satisfy $\rho_{b_k}(t)\leq\rho_b(t)$ and $\rho_{\sigma_k}(t)\leq\rho_\sigma(t)$ for any $t\geq0$;
\item
$b_k$ and $\sigma_k$ satisfy the local Lipschitz conditions;
\item
$\displaystyle\sigma_k(x)\sigma_k(x)^\top\geq\frac{c_\UE}{2}I_{n\times n}$ for any $x\in\bR^n$.
\end{itemize}
In particular, it holds that
\begin{equation*}
	\rho_{b_k}\Big(\frac{\ep_{M_k}}{r(M_k)}\Big)\leq\Delta_{0,k}\ \text{and}\ \rho_{\sigma_k}\Big(\frac{\ep_{M_k}}{r(M_k)}\Big)\leq\Delta_{0,k}
\end{equation*}
for any $k\in\bN$.

Let us estimate the three important terms appearing in \cref{proof_prop_key-estimate} with $(\bar{b},\bar{\sigma},m,\Delta_0,\Delta_1,\Delta_2,\Delta_3,\lambda,J)$ replaced by $(b_k,\sigma_k,m_k,\Delta_{0,k},\Delta_{1,k},\Delta_{2,k},\Delta_{3,k},\lambda_k,J_k)$. First, observe that
\begin{align*}
	&9J_k\Big\{(1+R_{m_k})\Delta_{0,k}+\rho_{b_k}\big(\Delta_{1,k}+\ep_{m_k}\Delta_{2,k}\big)^{1/2}\Delta_{1,k}^{1/2}+R_{m_k}\rho_{\sigma_k}\big(\Delta_{1,k}+\ep_{m_k}\Delta_{2,k}\big)\Big\}+\frac{1}{3}\Delta_{3,k}\\
	&\leq18J_k\Big(\rho_b(\ep_{m_k}^2)^{1/2}\ep_{m_k}+R_{m_k}\rho_\sigma(\ep_{m_k}^2)\Big)+\frac{1}{3}\Big(\rho_b(\ep_{m_k}^2)^{1/2}+R_{m_k}\frac{\rho_\sigma(\ep_{m_k}^2)}{\ep_{m_k}}\Big)^{1/2}\ep_{m_k}\\
	&=\Big\{18\Big(\rho_b(\ep_{m_k}^2)^{1/2}+R_{m_k}\frac{\rho_\sigma(\ep_{m_k}^2)}{\ep_{m_k}}\Big)^{1/4}+\frac{1}{3}\Big\}\Big(\rho_b(\ep_{m_k}^2)^{1/2}+R_{m_k}\frac{\rho_\sigma(\ep_{m_k}^2)}{\ep_{m_k}}\Big)^{1/2}\ep_{m_k}.
\end{align*}
Since $\rho_b(\ep_{m_k}^2)^{1/2}+R_{m_k}\frac{\rho_\sigma(\ep_{m_k}^2)}{\ep_{m_k}}\leq108^{-4}$, it holds that
\begin{equation}\label{proof_eq_estimate-A}
\begin{split}
	&9J_k\Big\{(1+R_{m_k})\Delta_{0,k}+\rho_{b_k}\big(\Delta_{1,k}+\ep_{m_k}\Delta_{2,k}\big)^{1/2}\Delta_{1,k}^{1/2}+R_{m_k}\rho_{\sigma_k}\big(\Delta_{1,k}+\ep_{m_k}\Delta_{2,k}\big)\Big\}+\frac{1}{3}\Delta_{3,k}\\
	&\leq\frac{1}{2}\Big(\rho_b(\ep_{m_k}^2)^{1/2}+R_{m_k}\frac{\rho_\sigma(\ep_{m_k}^2)}{\ep_{m_k}}\Big)^{1/2}\ep_{m_k}.
\end{split}
\end{equation}
Second, we have
\begin{equation}\label{proof_eq_estimate-B}
\begin{split}
	J_kR_{m_k}\Delta^{-1}_{3,k}\Big\{\Delta_{0,k}+\rho_{\sigma_k}\big(\Delta_{1,k}+\ep_{m_k}\Delta_{2,k}\big)\Big\}&\leq2J_k\Delta_{3,k}^{-1}\Big\{\rho_b(\ep_{m_k}^2)^{1/2}\ep_{m_k}+R_{m_k}\rho_\sigma(\ep_{m_k}^2)\Big\}\\
	&=2\Big(\rho_b(\ep_{m_k}^2)^{1/2}+R_{m_k}\frac{\rho_\sigma(\ep_{m_k}^2)}{\ep_{m_k}}\Big)^{1/4}.
\end{split}
\end{equation}
Third, we have
\begin{equation}\label{proof_eq_estimate-C}
	\lambda_k\Delta_{1,k}^{1/2}=\frac{\ep_{m_k}+\Big(\rho_b(\ep_{m_k}^2)^{1/2}+R_{m_k}\frac{\rho_\sigma(\ep_{m_k}^2)}{\ep_{m_k}}\Big)^{1/2}}{\sqrt{2}}.
\end{equation}


\begin{rem}\label{proof_rem_LG}
The maps $b,b_k:\bR^n\to\bR^n$ and $\sigma,\sigma_k:\bR^n\to\bR^{n\times d}$ satisfy the linear growth condition \eqref{pre_eq_LG} with the same constant $c_\LG>0$. Indeed, for $\varphi=b,b_k,\sigma,\sigma_k$, we have $|\varphi(x)-\varphi(0)|\leq\sqrt{\rho_\varphi(1)}$ for any $x\in\bR^n$ with $|x|\leq1$; also, for any $x\in\bR^n$ with $|x|\in(p,p+1]$ and $p\in\bN$,
\begin{align*}
	|\varphi(x)-\varphi(0)|\leq\sum^{p+1}_{q=1}\Big|\varphi\Big(\frac{q}{p+1}x\Big)-\varphi\Big(\frac{q-1}{p+1}x\Big)\Big|\leq(p+1)\sqrt{\rho_\varphi(1)}\leq(|x|+1)\sqrt{\rho_\varphi(1)}.
\end{align*}
Hence,
\begin{equation*}
	|\varphi(x)|\leq|\varphi(0)|+(1+|x|)\sqrt{\rho_\varphi(1)}\leq\Big\{|\varphi(0)|+\sqrt{\rho_\varphi(1)}\Big\}(1+|x|)
\end{equation*}
for any $x\in\bR^n$. Noting that $\rho_{b_k}\leq\rho_b$, $\rho_{\sigma_k}\leq\rho_\sigma$, and
\begin{equation*}
	|b_k(0)|\leq|b(0)|+\sqrt{\frac{\Delta_{0,k}}{2}}\leq|b(0)|+1,\ |\sigma_k(0)|\leq|\sigma(0)|+\sqrt{\frac{\Delta_{0,k}}{2}}\leq|\sigma(0)|+1,
\end{equation*}
we see that the linear growth condition \eqref{pre_eq_LG} holds for $\varphi=b,b_k,\sigma,\sigma_k$ with the same constant
\begin{equation*}
	c_\LG=\Big\{|b(0)|+1+\sqrt{\rho_b(1)}\Big\}\vee\Big\{|\sigma(0)|+1+\sqrt{\rho_\sigma(1)}\Big\}<\infty.
\end{equation*}
\end{rem}

For each $k\in\bN$, consider the following SEE:
\begin{equation}\label{proof_eq_SEE-approximation}
	\begin{dcases}
	\diff Y^k_t(\theta)=-\theta Y^k_t(\theta)\,\diff t+b_k(\mu_{M_k}[Y^k_t])\,\diff t+\sigma_k(\mu_{M_k}[Y^k_t])\,\diff W_t,\ \ \theta\in\supp,\ t>0,\\
	Y^k_0(\theta)=y(\theta),\ \ \theta\in\supp.
	\end{dcases}
\end{equation}
Recall that the coefficients $b_k:\bR^n\to\bR^n$ and $\sigma_k:\bR^n\to\bR^{n\times d}$ satisfy the linear growth condition and the local Lipschitz condition. Thus, we can apply \cref{pre_lemm_SEE-M} to the SEE \eqref{proof_eq_SEE-approximation}. In particular, for the SEE \eqref{proof_eq_SEE-approximation}, strong existence and pathwise uniqueness hold, the solution is weakly continuous with respect to the initial condition $y\in\cH_\mu$, and the mild solution is a time-homogeneous Markov process. By means of \cref{proof_prop_key-estimate}, we will show the following two claims:


\begin{claim}\label{proof_claim_1}
For any $y\in\cH_\mu$, the sequence of the laws of the mild solutions $Y^{k,y}$ of the SEEs \eqref{proof_eq_SEE-approximation} with the initial condition $y$ (which does not depend on the choice of the probability space or the Brownian motion by uniqueness in law) is a Cauchy sequence in $(\cP(\Lambda),d_\LP)$.
\end{claim}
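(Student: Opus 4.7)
The plan is to estimate $d_\LP(\Law_\bP(Y^{k,y}),\Law_\bP(Y^{k',y}))$ for $k<k'$ via the C-n-R decomposition \eqref{proof_eq_CnR}, and then let $k\to\infty$. I would fix $y \in \cH_\mu$, a filtered probability space $(\Omega,\cF,\bF,\bP)$ and a $d$-dimensional $\bF$-Brownian motion $W$ on which every strong solution $Y^k = Y^{k,y}$ of \eqref{proof_eq_SEE-approximation} is simultaneously realized (this is legitimate by \cref{pre_lemm_SEE-M}). For each pair $k<k'$, I would then apply \cref{proof_prop_key-estimate} with $Y = Y^{k'}$, $\bar Y = Y^{k}$, $(b,\sigma,M) = (b_{k'},\sigma_{k'},M_{k'})$, $(\bar b,\bar\sigma,\bar M) = (b_k,\sigma_k,M_k)$, $m = m_k$, $\lambda = \lambda_k$, $J = J_k$ and $\Delta_i$ equal to a fixed absolute multiple of $\Delta_{i,k}$. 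The hypothesis $\sup_x|b_{k'} - b_k|^2 \leq \Delta_0$ follows from the triangle inequality together with the monotonicity $\Delta_{0,k'}\leq\Delta_{0,k}$; the bounds on $\rho_{\bar b}(\ep_{\bar M}/r(\bar M))$, $\rho_{\bar\sigma}(\ep_{\bar M}/r(\bar M))$, the uniform ellipticity of $\bar\sigma$, and strong well-posedness of \eqref{proof_eq_SEE-barY} are all supplied by the construction of $b_k,\sigma_k,M_k$ together with \cref{pre_lemm_SEE-M}. All constants thereby produced will be independent of $k'>k$.

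For the ``control'' contribution in \eqref{proof_eq_CnR}, I would combine \eqref{proof_eq_estimate-control} with \eqref{proof_eq_estimate-B} to obtain $\bP(\Omega \setminus \hat\Omega) \leq C\bigl(\rho_b(\ep_{m_k}^2)^{1/2} + R_{m_k}\rho_\sigma(\ep_{m_k}^2)/\ep_{m_k}\bigr)^{1/4} \to 0$ uniformly in $k'>k$. On $\hat\Omega$ the a.s.\ inequality provided by \cref{proof_prop_key-estimate}, together with \eqref{proof_eq_estimate-A}, keeps each of the three quantities defining $\tau$ strictly below its threshold on $[0,\tau\wedge\zeta]$, from which I read off $\tau \geq \zeta$ on $\hat\Omega$. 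Given any horizon $T>0$, \cref{poof_lemm_estimate} and Markov's inequality yield $\bP(\zeta < T) \leq C_0 e^{C_0 T}(1+\|y\|_{\cH_\mu}^2)/J_k \to 0$ because $J_k \to \infty$. On $\hat\Omega \cap \{\zeta \geq T\}$, the $\|\cdot\|_m$-bound upgrades to a $\|\cdot\|_{\cH_\mu}$-bound via \cref{proof_lemm_mu}(i), while the elementary inequality $\|\cdot\|_{\cV_\mu}^2 \leq \int\theta r_m(\theta)|\cdot|^2\dmu + \|\cdot\|_{\cH_\mu}^2$ (valid because $r \leq r_m$) turns the $\theta r_m$-weighted integral estimate into control of $\int_0^T \|Y^{k'}_s - \hat Y_s\|_{\cV_\mu}^2\,ds$. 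Together these give $\|Y^{k'} - \hat Y\|_{\Lambda_T} \to 0$ on an event of $\bP$-probability tending to $1$, so by the definitions of $d_\Lambda$ and $d_\LP$, $d_\LP(\Law_\bP(Y^{k'}),\Law_\bP(\hat Y)) \to 0$.

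For the ``reimbursement'' contribution, \eqref{proof_eq_estimate-reimburse} combined with \eqref{proof_eq_estimate-C} and $d_\LP \leq d_\TV$ directly gives $d_\LP(\Law_\bP(\hat Y),\Law_\bP(Y^k)) \leq (2\sqrt{2c_\UE})^{-1}\bigl(\ep_{m_k} + (\rho_b(\ep_{m_k}^2)^{1/2}+R_{m_k}\rho_\sigma(\ep_{m_k}^2)/\ep_{m_k})^{1/2}\bigr) \to 0$. Plugging both contributions into \eqref{proof_eq_CnR} and sending $k\to\infty$ uniformly in $k'>k$ yields the Cauchy property. The hard part will be the bookkeeping that ties the two steps together: the exponents in the definitions of $\Delta_{i,k},\lambda_k,J_k$ have been calibrated via \eqref{proof_eq_estimate-A}--\eqref{proof_eq_estimate-C} precisely so that the probability of leaving $\hat\Omega$, the gap between $\tau$ and $\zeta$, and the reimbursement distance all vanish at compatible rates. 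Verifying this compatibility, together with the passage from the $\{m, r_m\}$-weighted estimate on the stochastic interval $[0,\tau\wedge\zeta]$ back to estimates in the native $\cH_\mu$ and $\cV_\mu$ norms on the deterministic interval $[0,T]$, is where the technical work concentrates.
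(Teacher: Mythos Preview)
Your proposal is correct and follows essentially the same route as the paper's proof: you apply \cref{proof_prop_key-estimate} with the same parameter choices $(b,\sigma,M)=(b_{k'},\sigma_{k'},M_{k'})$, $(\bar b,\bar\sigma,\bar M)=(b_k,\sigma_k,M_k)$, $m=m_k$, $\lambda=\lambda_k$, $J=J_k$, $\Delta_i=\Delta_{i,k}$, then combine \eqref{proof_eq_estimate-A}--\eqref{proof_eq_estimate-C} with the a priori estimate of \cref{poof_lemm_estimate} and the coupling characterization of $d_\LP$ exactly as the paper does. One minor omission: your bound $\bP(\zeta<T)\leq C_0e^{C_0T}(1+\|y\|_{\cH_\mu}^2)/J_k$ forgets the additive term $\1_{\{J_k<T\}}$ arising from the truncation $\wedge J_k$ in the definition of $\zeta$, but since $J_k\to\infty$ this is harmless.
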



\begin{claim}\label{proof_claim_2}
If $(Y^y,W^y,\Omega^y,\cF^y,\bF^y,\bP^y)$ is a weak solution of the original SEE \eqref{intro_eq_SEE-lift} with a given initial condition $y\in\cH_\mu$, then the law of $Y^y$ is uniquely characterized as the weak limit of the laws of $\{Y^{k,y}\}_{k\in\bN}$, and the weak convergence is uniform with respect to the initial condition $y$ on each bounded subset of $\cH_\mu$.
\end{claim}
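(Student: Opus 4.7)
The plan is to prove \cref{proof_claim_2} by applying \cref{proof_prop_key-estimate} with $(Y,\bar Y)=(Y^y,Y^{k,y})$, where $Y^{k,y}$ is the strong solution of the approximating SEE \eqref{proof_eq_SEE-approximation} realized on the same filtered space $(\Omega^y,\cF^y,\bF^y,\bP^y)$ and driven by the same Brownian motion $W^y$ that carries the given weak solution $Y^y$ (strong existence and pathwise uniqueness for \eqref{proof_eq_SEE-approximation} are guaranteed by \cref{pre_lemm_SEE-M} thanks to the local Lipschitz property of $b_k,\sigma_k$). The parameters of \cref{proof_prop_key-estimate} are chosen as prescribed in the excerpt: $m=m_k$, $\bar M=M_k$, $M=\infty$, $(\bar b,\bar\sigma)=(b_k,\sigma_k)$, $(b,\sigma)$ the original coefficients, and $(\Delta_{0,k},\Delta_{1,k},\Delta_{2,k},\Delta_{3,k},\lambda_k,J_k)$ as defined. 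The four coefficient hypotheses of \cref{proof_prop_key-estimate} then hold with $\Delta_0=\Delta_{0,k}$ by the construction of $b_k,\sigma_k$ and $M_k$. On the same probability space I introduce the controlled process $\hat Y^{k,y}$ as the mild solution of \eqref{proof_eq_SEE-hatY}; its existence follows from a fixed-point argument analogous to the one producing $Y^{k,y}$.

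With this setup in hand, the C-n-R splitting
\[
    d_\LP\bigl(\Law_{\bP^y}(Y^y),\Law_{\bP^y}(Y^{k,y})\bigr)
    \le d_\LP\bigl(\Law_{\bP^y}(Y^y),\Law_{\bP^y}(\hat Y^{k,y})\bigr)
    +d_\TV\bigl(\Law_{\bP^y}(\hat Y^{k,y}),\Law_{\bP^y}(Y^{k,y})\bigr)
\]
reduces the task to two independent estimates. The reimbursement summand is controlled by \eqref{proof_eq_estimate-reimburse} combined with \eqref{proof_eq_estimate-C}, giving a bound that tends to $0$ as $k\to\infty$ uniformly in $y$. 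For the control summand I exploit that, on the event $\hat\Omega$ appearing in \cref{proof_prop_key-estimate}, the RHS of the ``control-step'' inequality is by \eqref{proof_eq_estimate-A} at most $\tfrac12\Delta_{3,k}$; the parameter scalings are engineered so that this bound implies $\int_0^{t\wedge\tau\wedge\zeta}|\mu_m[Y_s-\hat Y_s]|^2\,\diff s\le\Delta_{3,k}/\bigl(4(\lambda_k-1)\bigr)=\ep_{m_k}^2/4<\Delta_{1,k}$, $\int_0^{t\wedge\tau\wedge\zeta}\int_\supp\theta r_m|Y_s-\hat Y_s|^2\dmu\,\diff s\le\Delta_{3,k}/4<\Delta_{2,k}$, and $\|Y_{t\wedge\tau\wedge\zeta}-\hat Y_{t\wedge\tau\wedge\zeta}\|_m^2\le\tfrac12\Delta_{3,k}<\Delta_{3,k}$. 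Path continuity then forces $\tau\ge\zeta$ on $\hat\Omega$, so on $\hat\Omega\cap\{\zeta\ge T\}$ I obtain a deterministic bound of the form $\|Y^y-\hat Y^{k,y}\|_{\Lambda_T}^2\le C(T)\Delta_{3,k}$ (recovering the $\cV_\mu$-integral from the $\|\cdot\|_m$-sup and the $\theta$-weighted integral). Combining this with the elementary $d_\LP(\Law(X),\Law(X'))\le\delta+\bP(d_\Lambda(X,X')>\delta)$, the a priori estimate $\bP^y(\zeta<T)\le J_k^{-1}C_0 e^{C_0 T}(1+\|y\|_{\cH_\mu}^2)$ from \cref{poof_lemm_estimate}, and the bound $\bP^y(\Omega\setminus\hat\Omega)\le 324 J_k R_{m_k}\Delta_{3,k}^{-1}\{\Delta_{0,k}+\rho_{\sigma_k}(\Delta_{1,k}+\ep_{m_k}\Delta_{2,k})\}\to 0$ supplied by \eqref{proof_eq_estimate-control} and \eqref{proof_eq_estimate-B}, the first summand is driven to $0$ as $k\to\infty$, uniformly in $y$ on bounded subsets of $\cH_\mu$.

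This establishes that $\Law_{\bP^y}(Y^y)$ coincides with the weak limit of $\Law(Y^{k,y})$, with convergence uniform on bounded subsets of $\cH_\mu$; note that the limit exists and is independent of the realization of the weak solution because \cref{proof_claim_1} is proved by an entirely parallel argument, applying \cref{proof_prop_key-estimate} to $(Y,\bar Y)=(Y^{j,y},Y^{k,y})$ for $j\ge k$ on a common filtered space carrying all $Y^{j,y}$ and a single Brownian motion, with $M=M_j$ and $(b,\sigma)=(b_j,\sigma_j)$; the $L^\infty$ differences $|b_j-b_k|$ and $|\sigma_j-\sigma_k|$ are bounded by the triangle inequality by $\sqrt{2\Delta_{0,k}}$, so the hypotheses hold with $\Delta_0=2\Delta_{0,k}$, a harmless factor of two. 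The main obstacle is not any single estimate but the \emph{simultaneous} coherence of the engineered scalings: the quantities $\Delta_{0,k},\dots,\Delta_{3,k},\lambda_k,J_k,m_k,M_k$ must render \eqref{proof_eq_estimate-A}, \eqref{proof_eq_estimate-B} and \eqref{proof_eq_estimate-C} \emph{all} asymptotically small while keeping the pathwise bound $\tfrac12\Delta_{3,k}$ strictly below every one of the three $\tau$-thresholds, and it is precisely the balance condition $\liminf_{m\to\infty}R_m\rho_\sigma(\ep_m^2)/\ep_m=0$ in case (i) (resp.\ the $\liminf$ condition on $\rho_\sigma$ in case (ii)) that makes such a choice of subsequence possible.
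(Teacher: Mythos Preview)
Your proposal is correct and follows essentially the same route as the paper: realize $Y^{k,y}$ on the given weak-solution probability space, introduce the controlled process $\hat Y^{k,y}$, apply \cref{proof_prop_key-estimate} with $M=\infty$, use \eqref{proof_eq_estimate-A}--\eqref{proof_eq_estimate-C} to verify that on $\hat\Omega$ the three $\tau$-thresholds cannot fire before $\zeta$, bound $\|Y^y-\hat Y^{k,y}\|_{\Lambda_T}$ on $\hat\Omega\cap\{\zeta\ge T\}$, and combine with the a priori estimate of \cref{poof_lemm_estimate} and the reimbursement bound \eqref{proof_eq_estimate-reimburse}. Two small points: (i) existence of the controlled $\hat Y^{k,y}$ is not quite ``analogous'' to the $Y^{k,y}$ fixed point, since the stopping time $\tau$ depends on the unknown solution; the paper handles this in \cref{pre_prop_SEE-stop} by first solving the uncontrolled equation, reading off $\tau$, and then restarting; (ii) since $\zeta=\inf\{\cdot\}\wedge J_k$, the bound $\bP^y(\zeta<T)$ should also carry the term $\1_{\{J_k<T\}}$, which vanishes for $k$ large.
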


\cref{proof_claim_1} and \cref{proof_lemm_weak-existence} imply that, for any $y\in\cH_\mu$, weak existence holds for the original SEE \eqref{intro_eq_SEE-lift} with the initial condition $y$. Then, \cref{proof_claim_2} implies that uniqueness in law holds for the SEE \eqref{intro_eq_SEE-lift}, that the map $y\mapsto\Law_{\bP^y}(Y^y)$ is continuous, and that $Y^y$ is a time-homogeneous Markov process on $\cH_\mu$ which satisfies the Feller property.

\subsubsection{Proof of \cref{proof_claim_1}: Weak existence}

First, let us prove \cref{proof_claim_1}. Let an arbitrary filtered probability space $(\Omega,\cF,\bF,\bP)$ and a $d$-dimensional $\bF$-Brownian motion $W$ be given, and fix an arbitrary initial condition $y\in\cH_\mu$. For each $k\in\bN$, we denote by $Y^k$ the mild solution of the SEE \eqref{proof_eq_SEE-approximation} with the initial condition $y$. To show \cref{proof_claim_1}, take arbitrary $k,\ell\in\bN$ with $k<\ell$. Noting that $\displaystyle\sup_{x\in\bR^n}|b_\ell(x)-b_k(x)|^2\leq\Delta_{0,k}$ and $\displaystyle\sup_{x\in\bR^n}|\sigma_\ell(x)-\sigma_k(x)|^2\leq\Delta_{0,k}$, we will apply \cref{proof_prop_key-estimate} to
\begin{equation*}
	b=b_\ell,\ \sigma=\sigma_\ell,\ \bar{b}=b_k,\ \bar{\sigma}=\sigma_k,
\end{equation*}
\begin{equation*}
	m=m_k,\ M=M_\ell,\ \bar{M}=M_k,\ \lambda=\lambda_k,\ J=J_k,
\end{equation*}
and
\begin{equation*}
	\Delta_0=\Delta_{0,k},\ \Delta_1=\Delta_{1,k},\ \Delta_2=\Delta_{2,k},\ \Delta_3=\Delta_{3,k}.
\end{equation*}
By \cref{pre_prop_SEE-stop}, there exists a unique mild solution $\hat{Y}^{k,\ell}$ of the controlled SEE
\begin{equation*}
	\begin{dcases}
	\diff\hat{Y}^{k,\ell}_t(\theta)=-\theta\hat{Y}^{k,\ell}_t(\theta)\,\diff t+b_k(\mu_{M_k}[\hat{Y}^{k,\ell}_t])\,\diff t+\sigma_k(\mu_{M_k}[\hat{Y}^{k,\ell}_t])\,\diff W_t+\lambda_k\mu_{m_k}[Y^\ell_t-\hat{Y}^{k,\ell}_t]\1_{[0,\tau_{k,\ell}]}(t)\,\diff t,\\\
	\hspace{7cm}\theta\in\supp,\ t>0,\\
	\hat{Y}^{k,\ell}_0(\theta)=y(\theta),\ \ \theta\in\supp,
	\end{dcases}
\end{equation*}
with the stopping time $\tau_{k,\ell}$ defined by
\begin{equation*}
	\tau_{k,\ell}:=\inf\left\{t\geq0\relmiddle|
	\begin{aligned}
	\int^t_0\big|\mu_{m_k}[Y^\ell_s-\hat{Y}^{k,\ell}_s]\big|^2\,\diff s\geq\Delta_{1,k},\ \int^t_0\int_\supp\theta r_{m_k}(\theta)|Y^\ell_s(\theta)-\hat{Y}^{k,\ell}_s(\theta)|^2\dmu\,\diff s\geq\Delta_{2,k}\\
	\text{or}\ \|Y^\ell_t-\hat{Y}^{k,\ell}_t\|^2_{m_k}\geq\Delta_{3,k}
	\end{aligned}
	\right\}.
\end{equation*}
Define
\begin{equation*}
	\zeta_{k,\ell}:=\inf\left\{t\geq0\relmiddle|\int^t_0\|Y^\ell_s\|^2_{\cV_\mu}\,\diff s\geq J_k\right\}\wedge J_k
\end{equation*}
and
\begin{equation*}
	\hat{\Omega}_{k,\ell}=\left\{
	\begin{aligned}
	&\|Y^\ell_{t\wedge\tau_{k,\ell}\wedge\zeta_{k,\ell}}-\hat{Y}^{k,\ell}_{t\wedge\tau_{k,\ell}\wedge\zeta_{k,\ell}}\|^2_{m_k}+2\int^{t\wedge\tau_{k,\ell}\wedge\zeta_{k,\ell}}_0\int_\supp\theta r_{m_k}(\theta)|Y^\ell_s(\theta)-\hat{Y}^{k,\ell}_s(\theta)|^2\dmu\,\diff s\\
	&\hspace{5cm}+2(\lambda_k-1)\int^{t\wedge\tau_{k,\ell}\wedge\zeta_{k,\ell}}_0\big|\mu_{m_k}[Y^\ell_s-\hat{Y}^{k,\ell}_s]\big|^2\,\diff s\\
	&\leq\frac{1}{2}\Big(\rho_b(\ep_{m_k}^2)^{1/2}+R_{m_k}\frac{\rho_\sigma(\ep_{m_k}^2)}{\ep_{m_k}}\Big)^{1/2}\ep_{m_k}\hspace{1cm}\text{for any $t\geq0$}
	\end{aligned}
	\right\}.
\end{equation*}
Then, by \cref{proof_prop_key-estimate} and the estimates \eqref{proof_eq_estimate-A}, \eqref{proof_eq_estimate-B} and \eqref{proof_eq_estimate-C}, we obtain
\begin{equation}\label{proof_eq_estimate-prob}
	\bP(\Omega\setminus\hat{\Omega}_{k,\ell})\leq648\Big(\rho_b(\ep_{m_k}^2)^{1/2}+R_{m_k}\frac{\rho_\sigma(\ep_{m_k}^2)}{\ep_{m_k}}\Big)^{1/4}
\end{equation}
and
\begin{equation}\label{proof_eq_estimate-TV}
	d_\TV\big(\Law_\bP(\hat{Y}^{k,\ell}),\Law_\bP(Y^k)\big)\leq\frac{1}{2c_\UE^{1/2}}\Big\{\ep_{m_k}+\Big(\rho_b(\ep_{m_k}^2)^{1/2}+R_{m_k}\frac{\rho_\sigma(\ep_{m_k}^2)}{\ep_{m_k}}\Big)^{1/2}\Big\}.
\end{equation}
Observe that
\begin{equation*}
	\frac{1}{2}\Big(\rho_b(\ep_{m_k}^2)^{1/2}+R_{m_k}\frac{\rho_\sigma(\ep_{m_k}^2)}{\ep_{m_k}}\Big)^{1/2}\ep_{m_k}=\frac{1}{2}\Delta_{3,k}=(\lambda_k-1)\Delta_{1,k}\leq108^{-2}\Delta_{2,k}.
\end{equation*}
Thus, noting $\lambda_k>1$, by the definition of the stopping time $\tau_{k,\ell}$, we have
\begin{equation}\label{proof_eq_large-tau-1}
	\hat{\Omega}_{k,\ell}\subset\{\zeta_{k,\ell}<\tau_{k,\ell}\}.
\end{equation}	
Now we fix an arbitrary $T>0$. Recall the definition \eqref{sol_eq_norm-Lambda} of the norm $\|\cdot\|_{\Lambda_T}$. Noting that $r(\theta)\leq r_{m_k}(\theta)$, on the event $\hat{\Omega}_{k,\ell}\cap\{\zeta_{k,\ell}\geq T\}$, we have
\begin{align*}
	\|Y^\ell-\hat{Y}^{k,\ell}\|^2_{\Lambda_T}&=\sup_{t\in[0,T]}\|Y^\ell_t-\hat{Y}^{k,\ell}_t\|^2_{\cH_\mu}+\int^T_0\|Y^\ell_t-\hat{Y}^{k,\ell}_t\|^2_{\cV_\mu}\,\diff t\\
	&\leq(1+T)\sup_{t\in[0,T]}\|Y^\ell_t-\hat{Y}^{k,\ell}_t\|^2_{\cH_\mu}+\int^T_0\int_\supp\theta r(\theta)|Y^\ell_t(\theta)-\hat{Y}^{k,\ell}_t(\theta)|^2\dmu\,\diff t\\
	&\leq(1+T)\sup_{t\in[0,T]}\|Y^\ell_t-\hat{Y}^{k,\ell}_t\|^2_{m_k}+\int^T_0\int_\supp\theta r_{m_k}(\theta)|Y^\ell_t(\theta)-\hat{Y}^{k,\ell}_t(\theta)|^2\dmu\,\diff t\\
	&\leq\Big(1+T+\frac{1}{2}\Big)\frac{1}{2}\Big(\rho_b(\ep_{m_k}^2)^{1/2}+R_{m_k}\frac{\rho_\sigma(\ep_{m_k}^2)}{\ep_{m_k}}\Big)^{1/2}\ep_{m_k}\\
	&\leq(1+T)\Big(\rho_b(\ep_{m_k}^2)^{1/2}+R_{m_k}\frac{\rho_\sigma(\ep_{m_k}^2)}{\ep_{m_k}}\Big)^{1/2}\ep_{m_k}.
\end{align*}
Therefore, by \eqref{proof_eq_estimate-prob} and the definition of the stopping time $\zeta_{k,\ell}$,
\begin{align*}
	&\bP\Big(\|Y^\ell-\hat{Y}^{k,\ell}\|_{\Lambda_T}>(1+T)^{1/2}\Big(\rho_b(\ep_{m_k}^2)^{1/2}+R_{m_k}\frac{\rho_\sigma(\ep_{m_k}^2)}{\ep_{m_k}}\Big)^{1/4}\ep^{1/2}_{m_k}\Big)\\
	&\leq\bP(\Omega\setminus\hat{\Omega}_{k,\ell})+\bP(\zeta_{k,\ell}<T)\\
	&\leq648\Big(\rho_b(\ep_{m_k}^2)^{1/2}+R_{m_k}\frac{\rho_\sigma(\ep_{m_k}^2)}{\ep_{m_k}}\Big)^{1/4}+\bP\Big(\int^T_0\|Y^\ell_t\|^2_{\cV_\mu}\,\diff t\geq J_k\Big)+\1_{\{J_k<T\}}\\
	&\leq648\Big(\rho_b(\ep_{m_k}^2)^{1/2}+R_{m_k}\frac{\rho_\sigma(\ep_{m_k}^2)}{\ep_{m_k}}\Big)^{1/4}+J_k^{-1}\bE\Big[\int^T_0\|Y^\ell_t\|^2_{\cV_\mu}\,\diff t\Big]+\1_{\{J_k<T\}}.
\end{align*}
Noting \cref{proof_rem_LG}, by \cref{poof_lemm_estimate}, we have
\begin{equation*}
	\bE\Big[\int^T_0\|Y^\ell_t\|^2_{\cV_\mu}\,\diff t\Big]\leq C_0e^{C_0T}(1+\|y\|^2_{\cH_\mu}),
\end{equation*}
where $C_0=C_0(\mu,|b(0)|,|\sigma(0)|,\rho_b(1),\rho_\sigma(1))\geq1$ is a constant which does not depend on $k$, $\ell$, $T$, $y$ or the choice of the probability space. Hence,
\begin{align*}
	&\bP\Big(\|Y^\ell-\hat{Y}^{k,\ell}\|_{\Lambda_T}>(1+T)^{1/2}\Big(\rho_b(\ep_{m_k}^2)^{1/2}+R_{m_k}\frac{\rho_\sigma(\ep_{m_k}^2)}{\ep_{m_k}}\Big)^{1/4}\ep^{1/2}_{m_k}\Big)\\
	&\leq\Big\{648+C_0e^{C_0T}(1+\|y\|^2_{\cH_\mu})\Big\}\Big(\rho_b(\ep_{m_k}^2)^{1/2}+R_{m_k}\frac{\rho_\sigma(\ep_{m_k}^2)}{\ep_{m_k}}\Big)^{1/4}+\1_{\{J_k<T\}}.
\end{align*}
Recall the well-known fact that the L\'{e}vy--Prokhorov metric between two probability measures $\nu_1$ and $\nu_2$ on a Polish space $(E,d)$ is represented in terms of couplings between $\nu_1$ and $\nu_2$:
\begin{equation*}
	d_\LP(\nu_1,\nu_2)=\inf\left\{\ep>0\relmiddle|\inf_{\pi\in\sC(\nu_1,\nu_2)}\int_{E\times E}\1_{\{d(x_1,x_2)>\ep\}}\,\diff\pi(x_1,x_2)<\ep\right\},
\end{equation*}
where $\sC(\nu_1,\nu_2)$ denotes the set of probability measures on $E\times E$ with $\nu_1$ and $\nu_2$ as respective first and second marginals (cf.\ \cite{Du02}). By the above observations, we get
\begin{align}
	\nonumber&d_\LP\big(\Law_\bP(Y^\ell|_{[0,T]}),\Law_\bP(\hat{Y}^{k,\ell}|_{[0,T]})\big)\\
	\nonumber&\leq\inf\left\{\ep>0\relmiddle|\bP\Big(\|Y^\ell-\hat{Y}^{k,\ell}\|_{\Lambda_T}>\ep\Big)<\ep\right\}\\
	&\leq\Big\{648+C_0e^{C_0T}(1+\|y\|^2_{\cH_\mu})\Big\}\Big(\rho_b(\ep_{m_k}^2)^{1/2}+R_{m_k}\frac{\rho_\sigma(\ep_{m_k}^2)}{\ep_{m_k}}\Big)^{1/4}+\1_{\{J_k<T\}}.
	\label{proof_eq_estimate-LP}
\end{align}
Observe that
\begin{align*}
	&d_\LP\big(\Law_\bP(Y^\ell|_{[0,T]}),\Law_\bP(Y^k|_{[0,T]})\big)\\
	&\leq d_\LP\big(\Law_\bP(Y^\ell|_{[0,T]}),\Law_\bP(\hat{Y}^{k,\ell}|_{[0,T]})\big)+d_\LP(\Law_\bP(\hat{Y}^{k,\ell}|_{[0,T]}),\Law_\bP(Y^k|_{[0,T]})\big)\\
	&\leq d_\LP\big(\Law_\bP(Y^\ell|_{[0,T]}),\Law_\bP(\hat{Y}^{k,\ell}|_{[0,T]})\big)+d_\TV(\Law_\bP(\hat{Y}^{k,\ell}|_{[0,T]}),\Law_\bP(Y^k|_{[0,T]})\big)\\
	&\leq d_\LP\big(\Law_\bP(Y^\ell|_{[0,T]}),\Law_\bP(\hat{Y}^{k,\ell}|_{[0,T]})\big)+d_\TV(\Law_\bP(\hat{Y}^{k,\ell}),\Law_\bP(Y^k)\big),
\end{align*}
where the second inequality follows from the well-known inequality $d_\LP(\nu_1,\nu_2)\leq d_\TV(\nu_1,\nu_2)$ for any probability measures $\nu_1$ and $\nu_2$ on a Polish space. Therefore, by \eqref{proof_eq_estimate-LP} and \eqref{proof_eq_estimate-TV}, we obtain
\begin{align*}
	&d_\LP\big(\Law_\bP(Y^\ell|_{[0,T]}),\Law_\bP(Y^k|_{[0,T]})\big)\\
	&\leq\Big\{648+C_0e^{C_0T}(1+\|y\|^2_{\cH_\mu})\Big\}\Big(\rho_b(\ep_{m_k}^2)^{1/2}+R_{m_k}\frac{\rho_\sigma(\ep_{m_k}^2)}{\ep_{m_k}}\Big)^{1/4}+\1_{\{J_k<T\}}\\
	&\hspace{2cm}+\frac{1}{2c_\UE^{1/2}}\Big\{\ep_{m_k}+\Big(\rho_b(\ep_{m_k}^2)^{1/2}+R_{m_k}\frac{\rho_\sigma(\ep_{m_k}^2)}{\ep_{m_k}}\Big)^{1/2}\Big\}.
\end{align*}
Note that the right-hand side tends to zero as $k\to\infty$. Consequently, $\{\Law_\bP(Y^k|_{[0,T]})\}_{k\in\bN}$ is a Cauchy sequence in $(\cP(\Lambda_T),d_\LP)$ for any $T>0$. In particular, $\{\Law_\bP(Y^k)\}_{k\in\bN}$ is relatively weakly compact in $\cP(\Lambda)$. By Prokhorov's theorem, we see that $\{Y^k\}_{k\in\bN}$ is tight on the Polish space $\Lambda$. Then, by \cref{proof_lemm_weak-existence}, weak existence holds for the SEE \eqref{intro_eq_SEE-lift}.

\subsubsection{Proof of \cref{proof_claim_2}: Uniqueness in law, weak continuity and the Markov property}\label{proof_subsubsection_proof-of-ClaimB}

Next, we prove \cref{proof_claim_2}. This can be proved by a similar manner as in the proof of \cref{proof_claim_1}. Let $y\in\cH_\mu$ be given, and suppose that $(Y,W,\Omega,\cF,\bF,\bP)$ is a weak solution of the original SEE \eqref{intro_eq_SEE-lift} with the initial condition $y$. For each $k\in\bN$, again we denote by $Y^k$ the mild solution of the SEE \eqref{proof_eq_SEE-approximation} with the initial condition $y$, which is now defined for the Brownian motion $W$ on the filtered probability space $(\Omega,\cF,\bF,\bP)$ specified above. Fix $k\in\bN$. Here, we apply \cref{proof_prop_key-estimate} to the given coefficients $b,\sigma$ and
\begin{equation*}
	\bar{b}=b_k,\ \bar{\sigma}=\sigma_k,\ m=m_k,\ M=\infty,\ \bar{M}=M_k,\ \lambda=\lambda_k,\ J=J_k,
\end{equation*}
\begin{equation*}
	\Delta_0=\Delta_{0,k},\ \Delta_1=\Delta_{1,k},\ \Delta_2=\Delta_{2,k},\ \Delta_3=\Delta_{3,k}.
\end{equation*}
By \cref{pre_prop_SEE-stop}, there exists a unique mild solution $\hat{Y}^k$ of the controlled SEE
\begin{equation*}
	\begin{dcases}
	\diff\hat{Y}^k_t(\theta)=-\theta\hat{Y}^k_t(\theta)\,\diff t+b_k(\mu_{M_k}[\hat{Y}^k_t])\,\diff t+\sigma_k(\mu_{M_k}[\hat{Y}^k_t])\,\diff W_t+\lambda_k\mu_{m_k}[Y_t-\hat{Y}^k_t]\1_{[0,\tau_k]}(t)\,\diff t,\\
	\hspace{7cm}\theta\in\supp,\ t>0,\\
	\hat{Y}^k_0(\theta)=y(\theta),\ \ \theta\in\supp,
	\end{dcases}
\end{equation*}
with the stopping time $\tau_k$ defined by
\begin{equation*}
	\tau_k:=\inf\left\{t\geq0\relmiddle|
	\begin{aligned}
	\int^t_0\big|\mu_{m_k}[Y_s-\hat{Y}^k_s]\big|^2\,\diff s\geq\Delta_{1,k},\ \int^t_0\int_\supp\theta r_{m_k}(\theta)|Y_s(\theta)-\hat{Y}^k_s(\theta)|^2\dmu\,\diff s\geq\Delta_{2,k}\\
	\text{or}\ \|Y_t-\hat{Y}^k_t\|^2_{m_k}\geq\Delta_{3,k}
	\end{aligned}
	\right\}.
\end{equation*}
Define
\begin{equation*}
	\zeta_k:=\inf\left\{t\geq0\relmiddle|\int^t_0\|Y_s\|^2_{\cV_\mu}\,\diff s\geq J_k\right\}\wedge J_k
\end{equation*}
and
\begin{equation*}
	\hat{\Omega}_k=\left\{
	\begin{aligned}
	&\|Y_{t\wedge\tau_k\wedge\zeta_k}-\hat{Y}^k_{t\wedge\tau_k\wedge\zeta_k}\|^2_{m_k}+2\int^{t\wedge\tau_k\wedge\zeta_k}_0\int_\supp\theta r_{m_k}(\theta)|Y_s(\theta)-\hat{Y}^k_s(\theta)|^2\dmu\,\diff s\\
	&\hspace{5cm}+2(\lambda_k-1)\int^{t\wedge\tau_k\wedge\zeta_k}_0\big|\mu_{m_k}[Y_s-\hat{Y}^k_s]\big|^2\,\diff s\\
	&\leq\frac{1}{2}\Big(\rho_b(\ep_{m_k}^2)^{1/2}+R_{m_k}\frac{\rho_\sigma(\ep_{m_k}^2)}{\ep_{m_k}}\Big)^{1/2}\ep_{m_k}\hspace{1cm}\text{for any $t\geq0$}
	\end{aligned}
	\right\}.
\end{equation*}
Then, by \cref{proof_prop_key-estimate} and the estimates \eqref{proof_eq_estimate-A}, \eqref{proof_eq_estimate-B} and \eqref{proof_eq_estimate-C}, we obtain
\begin{equation*}
	\bP(\Omega\setminus\hat{\Omega}_k)\leq648\Big(\rho_b(\ep_{m_k}^2)^{1/2}+R_{m_k}\frac{\rho_\sigma(\ep_{m_k}^2)}{\ep_{m_k}}\Big)^{1/4}
\end{equation*}
and
\begin{equation*}
	d_\TV\big(\Law_\bP(\hat{Y}^k),\Law_\bP(Y^k)\big)\leq\frac{1}{2c_\UE^{1/2}}\Big\{\ep_{m_k}+\Big(\rho_b(\ep_{m_k}^2)^{1/2}+R_{m_k}\frac{\rho_\sigma(\ep_{m_k}^2)}{\ep_{m_k}}\Big)^{1/2}\Big\}.
\end{equation*}
Furthermore, by the same arguments as in the proof of \cref{proof_claim_1}, we get
\begin{equation}\label{proof_eq_large-tau-2}
	\hat{\Omega}_k\subset\{\zeta_k<\tau_k\}.
\end{equation}
Also, by \cref{poof_lemm_estimate}, we have
\begin{equation*}
	\bE\Big[\int^T_0\|Y_t\|^2_{\cV_\mu}\,\diff t\Big]\leq C_0e^{C_0T}(1+\|y\|^2_{\cH_\mu})
\end{equation*}
for any $T>0$, where $C_0=C_0(\mu,|b(0)|,|\sigma(0)|,\rho_b(1),\rho_\sigma(1))\geq1$ is a constant which does not depend on $T$, $y$ or the choice of the weak solution $(Y,W,\Omega,\cF,\bF,\bP)$. Then, by the same arguments as in the proof of \cref{proof_claim_1}, we can show that
\begin{align*}
	&d_\LP\big(\Law_\bP(Y|_{[0,T]}),\Law_\bP(Y^k|_{[0,T]})\big)\\
	&\leq\Big\{648+C_0e^{C_0T}(1+\|y\|^2_{\cH_\mu})\Big\}\Big(\rho_b(\ep_{m_k}^2)^{1/2}+R_{m_k}\frac{\rho_\sigma(\ep_{m_k}^2)}{\ep_{m_k}}\Big)^{1/4}+\1_{\{J_k<T\}}\\
	&\hspace{2cm}+\frac{1}{2c_\UE^{1/2}}\Big\{\ep_{m_k}+\Big(\rho_b(\ep_{m_k}^2)^{1/2}+R_{m_k}\frac{\rho_\sigma(\ep_{m_k}^2)}{\ep_{m_k}}\Big)^{1/2}\Big\}
\end{align*}
for any $k\in\bN$, $T>0$ and $y\in\cH_\mu$. Therefore, for any $T>0$, we have $\lim_{k\to\infty}Y^k|_{[0,T]}=Y|_{[0,T]}$ weakly on $\Lambda_T$, and the convergence is uniform with respect to the initial condition $y$ on each bounded subset of $\cH_\mu$. This implies that $\lim_{k\to\infty}Y^k=Y$ weakly on $\Lambda$ uniformly in $y$ on each bounded subset of $\cH_\mu$. This means that the law of $Y$ is specified as the weak limit of the sequence $\{Y^k\}_{k\in\bN}$ of the mild solutions of the approximating SEEs \eqref{proof_eq_SEE-approximation} which satisfy strong existence and pathwise uniqueness (and in particular uniqueness in law). Consequently, uniqueness in law holds for the original SEE \eqref{intro_eq_SEE-lift}. Furthermore, by the weak continuity of the map $y\mapsto Y^k$, together with the local uniformity in $y\in\cH_\mu$ of the weak convergence $\lim_{k\to\infty}Y^k=Y$, we see that $y\mapsto\Law_\bP(Y)$ is continuous as a map from $(\cH_\mu,\|\cdot\|_{\cH_\mu})$ to $(\cP(\Lambda),d_\LP)$. The Markov property of $Y$ follows from that of $Y^k$ and a standard approximation argument; see the last part of the proof of \cref{pre_lemm_SEE-M}. The Feller property follows from the weak continuity of $y\mapsto\Law_\bP(Y)$. This completes the proof of \cref{main_theo_main} in the ``singular case'' (i).

\subsection{Some remarks on the proof of the main theorem}\label{proof-remark}

Let us make some remarks on the proof of \cref{main_theo_main}. We will give a sketch of the proof of the ``regular case'' and discuss about the ``balance condition''.

\subsubsection{The regular case}\label{proof-remark-regular}

Now we consider the ``regular case'' (ii) of \cref{main_theo_main}. Namely, assume that the kernel $K$ is regular (i.e., $\lim_{t\downarrow0}K(t)<\infty$, or equivalently $\mu([0,\infty))<\infty$) and that the following holds:
\begin{equation*}
	\liminf_{\delta\downarrow0}\frac{\rho_\sigma(\delta^2)}{\delta}=0.
\end{equation*}
By the regularity of $K$, we can take $r\equiv1$ in \cref{sol_assum_kernel}. In this case, for any $m\in[1,\infty)$, we have $r_m(\theta)=1$ for any $\theta\in[0,\infty)$, and thus
\begin{align*}
	&R_m=\int_\supp r_m(\theta)\dmu=\mu([0,\infty))<\infty,\\
	&\ep_m=\int_\supp(1-r_m(\theta))^2\theta^{-1}r_m(\theta)^{-1}\dmu=0.
\end{align*}
Take a decreasing sequence $\{\delta_k\}_{k\in\bN}\subset(0,1]$ such that $\lim_{k\to\infty}\delta_k=0$ and $\lim_{k\to\infty}\frac{\rho_\sigma(\delta^2_k)}{\delta_k}=0$. For each $k\in\bN$, let $m_k=M_k=1$, and define $\Delta_{0,k},\Delta_{1,k},\Delta_{2,k},\Delta_{3,k}\in(0,\infty)$, $\lambda_k\in(1,\infty)$ and $J_k\in[1,\infty)$ by
\begin{equation*}
	\Delta_{0,k}=\rho_b(\delta^2_k)^{1/2}\delta_k+\rho_\sigma(\delta^2_k),\ \Delta_{1,k}=\delta^2_k,\ \Delta_{2,k}=1,\ \Delta_{3,k}=\Big(\rho_b(\delta^2_k)^{1/2}+\frac{\rho_\sigma(\delta^2_k)}{\delta_k}\Big)^{1/2}\delta_k,
\end{equation*}
and
\begin{equation*}
	\lambda_k=1+\Big(\rho_b(\delta^2_k)^{1/2}+\frac{\rho_\sigma(\delta^2_k)}{\delta_k}\Big)^{1/2}\delta^{-1}_k,\ J_k=\Big(\rho_b(\delta^2_k)^{1/2}+\frac{\rho_\sigma(\delta^2_k)}{\delta_k}\Big)^{-1/4}.
\end{equation*}
Construct suitable maps $b_k:\bR^n\to\bR^n$ and $\sigma_k:\bR^n\to\bR^{n\times d}$ by taking convolutions of $b$ and $\sigma$ with smooth mollifiers. Then, by the same arguments as in \cref{proof-main}, we can show that the conclusions of \cref{main_theo_main} hold in the regular case (ii).

\subsubsection{The balance condition}\label{proof-remark-balance}

Let us remark on the ``balance condition''
\begin{equation}\label{proof_eq_condition-balance}
	\liminf_{m\to\infty}R_m\frac{\rho_\sigma(\ep^2_m)}{\ep_m}=0
\end{equation}
appearing in the singular case (i) of \cref{main_theo_main}. Note that, in the singular case, $\ep_m>0$ for any $m\in[1,\infty)$, $\lim_{m\to\infty}R_m=\infty$ and $\lim_{m\to\infty}R_m\ep_m=0$. By means of \cref{proof_prop_key-estimate}, the key step in \cref{proof-main} is to construct sequences $\{m_k\}_{k\in\bN}\subset[1,\infty)$, $\{\lambda_k\}_{k\in\bN}\subset(1,\infty)$ and $\{\Delta_{i,k}\}_{k\in\bN}\subset(0,\infty)$, $i=1,2,3$, such that $\lim_{k\to\infty}m_k=\infty$ and that
\begin{equation}\label{proof_eq_condition-Delta}
	\begin{dcases}
	\Delta_{3,k}\leq C(\lambda_k-1)\Delta_{1,k}\ \text{for any $k\in\bN$ for some constant $C\in(0,\infty)$},\\
	\Delta_{3,k}\leq C\Delta_{2,k}\ \text{for any $k\in\bN$ for some constant $C\in(0,\infty)$},\\
	\lim_{k\to\infty}\Delta_{3,k}=0,\\
	\lim_{k\to\infty}R_{m_k}\Delta^{-1}_{3,k}\rho_\sigma(\Delta_{1,k}+\ep_{m_k}\Delta_{2,k})=0,\\
	\lim_{k\to\infty}\lambda_k\Delta^{1/2}_{1,k}=0.
	\end{dcases}
\end{equation}
Here, in view of the C-n-R strategy \eqref{proof_eq_CnR}, the first four lines of \eqref{proof_eq_condition-Delta} correspond to the ``control-step'' \eqref{proof_eq_estimate-control}, and the last line corresponds to the ``reimbursement-step'' \eqref{proof_eq_estimate-reimburse}. The first two lines are required in order to ensure that the stopping time $\tau$ is sufficiently large on the event $\hat{\Omega}$; see \eqref{proof_eq_large-tau-1} and \eqref{proof_eq_large-tau-2}.

We have shown in \cref{proof-main} that the balance condition \eqref{proof_eq_condition-balance} makes it possible to construct sequences satisfying all the requirements in \eqref{proof_eq_condition-Delta}. Conversely, if the diffusion coefficient $\sigma$ is of ``H\"{o}lder-type'', i.e., $0<\liminf_{t\downarrow0}\frac{\rho_\sigma(t)}{t^\gamma}\leq\limsup_{t\downarrow0}\frac{\rho_\sigma(t)}{t^\gamma}<\infty$ for some constant $\gamma\in(0,1]$, then the balance condition \eqref{proof_eq_condition-balance} is necessary for the existence of such sequences. Indeed, by easy calculations, we can show that \eqref{proof_eq_condition-Delta} implies that
\begin{equation*}
	\lim_{k\to\infty}\Delta_{1,k}=0,\ \lim_{k\to\infty}\ep_{m_k}\Delta_{2,k}=0,\ \lim_{k\to\infty}\Delta_{3,k}\Delta^{-1/2}_{1,k}=0,
\end{equation*}
and that
\begin{equation}\label{proof_eq_condition-gamma}
	\lim_{k\to\infty}R_{m_k}\Delta^{\gamma-1/2}_{1,k}=0,\ \lim_{k\to\infty}R_{m_k}\ep^\gamma_{m_k}\Delta^{\gamma-1}_{2,k}=0,\ \lim_{k\to\infty}R_{m_k}\Delta^\gamma_{1,k}\Delta^{-1}_{2,k}=0,\ \lim_{k\to\infty}R_{m_k}\ep^\gamma_{m_k}\Delta^\gamma_{2,k}\Delta^{-1/2}_{1,k}=0.
\end{equation}
Since $\lim_{k\to\infty}R_{m_k}=\infty$ and $\lim_{k\to\infty}\Delta_{1,k}=0$, the first equality in \eqref{proof_eq_condition-gamma} implies that $\gamma>\frac{1}{2}$. Now assume that the balance condition \eqref{proof_eq_condition-balance} does not hold; $\liminf_{m\to\infty}R_m\ep^{2\gamma-1}_m>0$. Then, noting that $\gamma\in(\frac{1}{2},1)$, the first, second and fourth equalities in \eqref{proof_eq_condition-gamma} imply that
\begin{equation*}
	\lim_{k\to\infty}\frac{\Delta_{1,k}}{\ep^2_{m_k}}=0,\ \lim_{k\to\infty}\frac{\ep_{m_k}}{\Delta_{2,k}}=0\ \text{and}\ \lim_{k\to\infty}\Big(\frac{\Delta_{2,k}}{\ep_{m_k}}\Big)^\gamma\Big(\frac{\ep^2_{m_k}}{\Delta_{1,k}}\Big)^{1/2}=0,
\end{equation*}
respectively, and we get a contradiction.

The above discussions indicate that the balance condition \eqref{proof_eq_condition-balance} is essential for the C-n-R strategy to prove \cref{main_theo_main} based on the estimates appearing in \cref{proof_prop_key-estimate}.


\appendix
\setcounter{theo}{0}
\setcounter{equation}{0}

\section{Appendix}\label{appendix}

In this appendix, we prove existence and uniqueness of the mild solutions of SEEs of the form \eqref{proof_eq_SEE-hatY} involving stopping times which depend on the solutions.


\begin{prop}\label{pre_prop_SEE-stop}
Suppose that \cref{sol_assum_kernel} holds. Let a filtered probability space $(\Omega,\cF,\bF,\bP)$ and a $d$-dimensional $\bF$-Brownian motion $W$ be fixed. Let $b_1,b_2:\Omega\times[0,\infty)\times\cH_\mu\to\bR^n$ and $\sigma_1,\sigma_2:\Omega\times[0,\infty)\times\cH_\mu\to\bR^{n\times d}$ be measurable maps satisfying the following conditions:
\begin{itemize}
\item
For $\varphi=b_1,b_2,\sigma_1,\sigma_2$, the process $(\varphi(t,y))_{t\geq0}$ is predictable for any $y\in\cH_\mu$.
\item
There exist a constant $c_\LG>0$ and a nonnegative predictable process $\xi$ satisfying $\bE[\int^T_0\xi^2_t\,\diff t]<\infty$ for any $T>0$ such that, for $\varphi=b_1,b_2,\sigma_1,\sigma_2$,
\begin{equation*}
	|\varphi(t,y)|\leq c_\LG(\xi_t+\|y\|_{\cH_\mu})
\end{equation*}
for any $t\geq0$ and any $y\in\cH_\mu$ a.s.
\item
There exist $L_k>0$, $k\in\bN$, such that, for $\varphi=b_1,b_2,\sigma_1,\sigma_2$,
\begin{equation*}
	|\varphi(t,y)-\varphi(t,\bar{y})|\leq L_k\|y-\bar{y}\|_{\cH_\mu}
\end{equation*}
for any $t\geq0$ and any $y,\bar{y}\in\cH_\mu$ satisfying $\|y\|_{\cH_\mu}\vee\|\bar{y}\|_{\cH_\mu}\leq k$ a.s., for any $k\in\bN$.
\end{itemize}
Also, let $f:\Omega\times[0,\infty)\times\cV_\mu\to\bR^{d_f}$ and $g:\Omega\times[0,\infty)\times\cH_\mu\to\bR^{d_g}$ with $d_f,d_g\in\bN$ be measurable maps satisfying the following conditions:
\begin{itemize}
\item
The process $(f(t,y))_{t\geq0}$ is predictable for any $y\in\cV_\mu$, and the process $(g(t,y))_{t\geq0}$ is predictable for any $y\in\cH_\mu$.
\item
There exist a constant $c_f>0$ and a nonnegative predictable process $\eta$ satisfying $\int^T_0\eta_t\,\diff t<\infty$ a.s.\ for any $T>0$ such that
\begin{equation*}
	|f(t,y)|\leq c_f(\eta_t+\|y\|^2_{\cV_\mu})
\end{equation*}
for any $t\geq0$ and any $y\in\cV_\mu$ a.s.
\item
For $\bP$-almost every $\omega\in\Omega$, the map $[0,\infty)\times\cH_\mu\ni(t,y)\mapsto g(\omega,t,y)\in\bR^{d_g}$ is continuous.
\end{itemize}
Let $O_f\subset\bR^{d_f}$ and $O_g\subset\bR^{d_g}$ be open sets. Then, for any initial condition $y\in\cH_\mu$, there exists a unique mild solution $Y$ to the following SEE:
\begin{equation}\label{pre_eq_SEE-stop}
	\begin{dcases}
	\diff Y_t(\theta)=-\theta Y_t(\theta)\,\diff t+\big\{b_1(t,Y_t)+b_2(t,Y_t)\1_{[0,\tau]}(t)\big\}\,\diff t+\big\{\sigma_1(t,Y_t)+\sigma_2(t,Y_t)\1_{[0,\tau]}(t)\big\}\,\diff W_t,\\
	\hspace{7cm}\theta\in\supp,\ t>0,\\
	Y_0(\theta)=y(\theta),\ \ \theta\in\supp.
	\end{dcases}
\end{equation}
with the stopping time $\tau=\tau(Y)$ given by
\begin{equation*}
	\tau=\inf\left\{t\geq0\relmiddle|\int^t_0f(s,Y_s)\,\diff s\notin O_f\ \text{or}\ g(t,Y_t)\notin O_g\right\}.
\end{equation*}
\end{prop}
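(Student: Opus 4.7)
The plan is to reduce the solution-dependent stopping time $\tau$ to two standard unstopped SEEs and paste their mild solutions together. First, I would invoke well-posedness for SEEs with locally Lipschitz coefficients having linear growth. Arguments of the same type as in \cite[Theorem 2.18]{Ha23}---the only difference being that the coefficients here are time-dependent, $\bF$-predictable, and have a random linear growth bound in terms of $\xi$, handled by a routine localization using the stopping times $\inf\{t\geq0:\|\tilde Y_t\|_{\cH_\mu}\geq k\}$ together with a priori estimates in the spirit of \cref{poof_lemm_estimate}---yield a unique $\cH_\mu$-continuous $\bF$-adapted mild solution $\tilde Y$ on $[0,\infty)$ of
\begin{equation*}
\diff \tilde Y_t(\theta) = -\theta\tilde Y_t(\theta)\,\diff t + (b_1+b_2)(t,\tilde Y_t)\,\diff t + (\sigma_1+\sigma_2)(t,\tilde Y_t)\,\diff W_t,\quad \tilde Y_0=y,
\end{equation*}
satisfying $\int_0^T\|\tilde Y_t\|^2_{\cV_\mu}\,\diff t<\infty$ a.s.\ for every $T>0$.

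Next, I would set $\tau:=\tau(\tilde Y)$ and verify that $\tau$ is an $\bF$-stopping time. The growth bound on $f$ combined with the local square-integrability of $\|\tilde Y_\cdot\|_{\cV_\mu}$ makes $t\mapsto\int_0^tf(s,\tilde Y_s)\,\diff s$ a continuous $\bF$-adapted $\bR^{d_f}$-valued process, so its exit time from the open set $O_f$ is a stopping time; similarly, the joint continuity of $g$ and the $\cH_\mu$-continuity of $\tilde Y$ make $t\mapsto g(t,\tilde Y_t)$ a continuous $\bF$-adapted process, so its exit time from $O_g$ is also a stopping time, and $\tau$ is the minimum of the two. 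Applying the same well-posedness result to the shifted filtration $(\cF_{\tau+t})_{t\geq0}$ and the SEE driven only by $b_1,\sigma_1$ with initial value $\tilde Y_\tau$ yields a unique mild solution $Y^*$ on $[\tau,\infty)$. Setting
\begin{equation*}
Y_t := \tilde Y_t\1_{[0,\tau]}(t) + Y^*_t\1_{(\tau,\infty)}(t),
\end{equation*}
one checks that $Y$ fulfils the mild formulation of \eqref{pre_eq_SEE-stop}; the key point is that, since $Y\equiv\tilde Y$ on $[0,\tau]$, the exit time $\tau(Y)$ coincides with $\tau(\tilde Y)=\tau$, so the indicator $\1_{[0,\tau]}$ appearing in \eqref{pre_eq_SEE-stop} is indeed consistent with the definition of $\tau$.

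Uniqueness follows by the same pasting argument in reverse: given two mild solutions $Y$ and $Y'$, pathwise uniqueness for the unstopped ``full'' SEE (\cite[Theorem 2.17]{Ha23}, localized by $\|\cdot\|_{\cH_\mu}$) forces $Y=Y'$ on $[0,\tau(Y)\wedge\tau(Y')]$, which in turn forces $\tau(Y)=\tau(Y')$; after this common time both $Y$ and $Y'$ solve the SEE driven only by $(b_1,\sigma_1)$ from the same initial value, so pathwise uniqueness again yields $Y=Y'$ globally. The main technical obstacle is essentially bookkeeping: extending the well-posedness results of \cite{Ha23} from deterministic, time-independent coefficients to the time-dependent, random, locally Lipschitz coefficients considered here, and verifying the adaptedness and continuity needed for $\tau$ to be a stopping time---both of which are expected to be routine given the hypotheses on $f$, $g$ and the coefficients.
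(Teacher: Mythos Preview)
Your approach is correct and close in spirit to the paper's, but the existence construction differs in one noteworthy respect. You build $Y$ by \emph{pasting}: solve the full SEE to get $\tilde Y$, record $\tau=\tau(\tilde Y)$, then restart at $\tau$ with only $(b_1,\sigma_1)$ and glue. The paper instead solves the full SEE to get $\hat Y$, records $\hat\tau=\tau(\hat Y)$, and then solves a \emph{single} SEE on all of $[0,\infty)$ with coefficients $b_1+b_2\1_{[0,\hat\tau]}$, $\sigma_1+\sigma_2\1_{[0,\hat\tau]}$ (now $\hat\tau$ is a fixed, given stopping time, so this is just an SEE with random time-dependent coefficients, covered directly by \cite[Theorem~2.18]{Ha23}). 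The paper then proves $Y=\hat Y$ on $[0,\hat\tau]$ via a global truncation $Y^k$ and the Lipschitz estimate of \cite[Theorem~2.17]{Ha23}, whence $\tau(Y)=\hat\tau$ and $Y$ solves \eqref{pre_eq_SEE-stop}. This avoids both the restart from a random time and the verification that a pasted process satisfies the mild formulation across $\tau$; your route requires checking the semigroup identity $e^{-\cdot(t-\tau)}\tilde Y_\tau = e^{-\cdot t}y+\int_0^\tau e^{-\cdot(t-s)}(\cdots)$ when $t>\tau$, which is routine but must be said. The uniqueness arguments are essentially identical in both proofs.
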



\begin{proof}
By \cite[Theorem 2.18]{Ha23}, there exists a unique mild solution $\hat{Y}$ to the SEE
\begin{equation*}
	\begin{dcases}
	\diff\hat{Y}_t(\theta)=-\theta\hat{Y}_t(\theta)\,\diff t+\big\{b_1(t,\hat{Y}_t)+b_2(t,\hat{Y}_t)\big\}\,\diff t+\big\{\sigma_1(t,\hat{Y}_t)+\sigma_2(t,\hat{Y}_t)\big\}\,\diff W_t,\ \ \theta\in\supp,\ t>0,\\
	\hat{Y}_0(\theta)=y(\theta),\ \ \theta\in\supp.
	\end{dcases}
\end{equation*}
Define a stopping time $\hat{\tau}=\hat{\tau}(\hat{Y})$ by
\begin{equation*}
	\hat{\tau}:=\inf\left\{t\geq0\relmiddle|\int^t_0f(s,\hat{Y}_s)\,\diff s\notin O_f\ \text{or}\ g(t,\hat{Y}_t)\notin O_g\right\}.
\end{equation*}
Again by \cite[Theorem 2.18]{Ha23}, there exists a unique mild solution $Y$ to the SEE
\begin{equation*}
	\begin{dcases}
	\diff Y_t(\theta)=-\theta Y_t(\theta)\,\diff t+\big\{b_1(t,Y_t)+b_2(t,Y_t)\1_{[0,\hat{\tau}]}(t)\big\}\,\diff t+\big\{\sigma_1(t,Y_t)+\sigma_2(t,Y_t)\1_{[0,\hat{\tau}]}(t)\big\}\,\diff W_t,\\
	\hspace{7cm}\theta\in\supp,\ t>0,\\
	Y_0(\theta)=y(\theta),\ \ \theta\in\supp.
	\end{dcases}
\end{equation*}
Also, for any $k\in\bN$, there exists a unique mild solution $Y^k$ to the SEE
\begin{equation*}
	\begin{dcases}
	\diff Y^k_t(\theta)=-\theta Y^k_t(\theta)\,\diff t+\big\{b^k_1(t,Y^k_t)+b^k_2(t,Y^k_t)\big\}\,\diff t+\big\{\sigma^k_1(t,Y^k_t)+\sigma^k_2(t,Y_t)\big\}\,\diff W_t,\ \ \theta\in\supp,\ t>0,\\
	Y^k_0(\theta)=y(\theta),\ \ \theta\in\supp,
	\end{dcases}
\end{equation*}
where $b^k_1,b^k_2:\Omega\times[0,\infty)\times\cH_\mu\to\bR^n$ and $\sigma^k_1,\sigma^k_2:\Omega\times[0,\infty)\times\cH_\mu\to\bR^{n\times d}$ are defined by
\begin{equation*}
	\varphi^k(t,y):=
	\begin{dcases}
	\varphi(t,y)\ &\text{if $\|y\|_{\cH_\mu}<k$},\\
	\varphi\Big(t,\frac{k}{\|y\|_{\cH_\mu}}y\Big)\ &\text{if $\|y\|_{\cH_\mu}\geq k$},
	\end{dcases}
\end{equation*}
for $\varphi=b_1,b_2,\sigma_1,\sigma_2$. Note that, for $\varphi=b_1,b_2,\sigma_1,\sigma_2$,
\begin{equation*}
	|\varphi^k(t,y)-\varphi^k(t,\bar{y})|\leq L_k\|y-\bar{y}\|_{\cH_\mu}
\end{equation*}
for any $t\geq0$ and any $y,\bar{y}\in\cH_\mu$ a.s. Thus, by \cite[Theorem 2.17]{Ha23}, there exists a constant $C_k>0$ such that, for any $T>0$,
\begin{align*}
	&\bE\Big[\sup_{t\in[0,T]}\|Y^k_{t\wedge\hat{\tau}\wedge\zeta_k}-Y_{t\wedge\hat{\tau}\wedge\zeta_k}\|^2_{\cH_\mu}+\int^{T\wedge\hat{\tau}\wedge\zeta_k}_0\|Y^k_t-Y_t\|^2_{\cV_\mu}\,\diff t\Big]\\
	&\leq C_ke^{C_kT}\bE\Big[\int^{T\wedge\hat{\tau}\wedge\zeta_k}_0\Big\{|b^k_1(t,Y_t)+b^k_2(t,Y_t)-b_1(t,Y_t)-b_2(t,Y_t)|^2\\
	&\hspace{5cm}+|\sigma^k_1(t,Y_t)+\sigma^k_2(t,Y_t)-\sigma_1(t,Y_t)-\sigma_2(t,Y_t)|^2\Big\}\,\diff t\Big]
\end{align*}
and
\begin{align*}
	&\bE\Big[\sup_{t\in[0,T]}\|Y^k_{t\wedge\hat{\zeta}_k}-\hat{Y}_{t\wedge\hat{\zeta}_k}\|^2_{\cH_\mu}+\int^{T\wedge\hat{\zeta}_k}_0\|Y^k_t-\hat{Y}_t\|^2_{\cV_\mu}\,\diff t\Big]\\
	&\leq C_ke^{C_kT}\bE\Big[\int^{T\wedge\hat{\zeta}_k}_0\Big\{|b^k_1(t,\hat{Y}_t)+b^k_2(t,\hat{Y}_t)-b_1(t,\hat{Y}_t)-b_2(t,\hat{Y}_t)|^2\\
	&\hspace{5cm}+|\sigma^k_1(t,\hat{Y}_t)+\sigma^k_2(t,\hat{Y}_t)-\sigma(t,\hat{Y}_t)-\sigma_2(t,\hat{Y}_t)|^2\Big\}\,\diff t\Big],
\end{align*}
where $\zeta_k$ and $\hat{\zeta}_k$ are stopping times defined by
\begin{equation*}
	\zeta_k:=\inf\{t\geq0\,|\,\|Y_t\|_{\cH_\mu}\geq k\}\ \text{and}\ \hat{\zeta}_k:=\inf\{t\geq0\,|\,\|\hat{Y}_t\|_{\cH_\mu}\geq k\}.
\end{equation*}
By the definitions of $b^k_1,b^k_2$, $\sigma^k_1,\sigma^k_2$, $\zeta_k$ and $\hat{\zeta}_k$, we see that, for any $T>0$,
\begin{equation*}
	Y^k_t=Y_t\ \ \text{for any $t\in[0,T\wedge\hat{\tau}\wedge\zeta_k]$ a.s.}
\end{equation*}
and
\begin{equation*}
	Y^k_t=\hat{Y}_t\ \ \text{for any $t\in[0,T\wedge\hat{\zeta}_k]$ a.s.}
\end{equation*}
This implies that
\begin{equation*}
	\bP\big(Y_t=\hat{Y}_t\ \text{for any $t\in[0,T\wedge\hat{\tau}]$}\big)\geq\bP\big(\zeta_k>T\ \text{and}\ \hat{\zeta}_k>T\big)
\end{equation*}
for any $T>0$ and any $k\in\bN$. By the $\cH_\mu$-continuity of the processes $Y$ and $\hat{Y}$, the right-hand side tends to $1$ as $k\to\infty$. Thus, we see that $Y_t=\hat{Y}_t$ for any $t\in[0,T\wedge\hat{\tau}]$ a.s.\ for any $T>0$, which implies that $Y_t=\hat{Y}_t$ for any $t\in[0,\hat{\tau}]$ a.s. Now we define
\begin{equation*}
	\tau:=\inf\left\{t\geq0\relmiddle|\int^t_0f(s,Y_s)\,\diff s\notin O_f\ \text{or}\ g(t,Y_t)\notin O_g\right\}.
\end{equation*}
Noting that the processes $\int^\cdot_0 f(s,Y_s)\,\diff s$, $g(\cdot,Y_\cdot)$, $\int^\cdot_0f(s,\hat{Y}_s)\,\diff s$ and $g(\cdot,\hat{Y}_\cdot)$ are continuous and that $O_f\subset\bR^{d_f}$ and $O_g\subset\bR^{d_g}$ are open sets, we have
\begin{align*}
	\bP\big(\hat{\tau}<\tau\big)&=\bP\Big(\int^{\hat{\tau}}_0f(s,Y_s)\,\diff s\in O_f,\ g(\hat{\tau},Y_{\hat{\tau}})\in O_g\ \text{and}\ \hat{\tau}<\tau\Big)\\
	&\leq\bP\Big(\int^{\hat{\tau}}_0f(s,\hat{Y}_s)\,\diff s\in O_f,\ g(\hat{\tau},\hat{Y}_{\hat{\tau}})\in O_g\Big)=0
\end{align*}
and
\begin{align*}
	\bP\big(\tau<\hat{\tau}\big)&=\bP\Big(\int^\tau_0f(s,\hat{Y}_s)\,\diff s\in O_f,\ g(\tau,\hat{Y}_{\tau})\in O_g\ \text{and}\ \tau<\hat{\tau}\Big)\\
	&\leq\bP\Big(\int^\tau_0f(s,Y_s)\,\diff s\in O_f,\ g(\tau,Y_\tau)\in O_g\Big)=0.
\end{align*}
Thus, we see that $\tau=\hat{\tau}$ a.s. By the construction, the process $Y$ is a mild solution of the SEE \eqref{pre_eq_SEE-stop}.

Now we prove the uniqueness. Assume that there is another mild solution $\bar{Y}$ to the SEE \eqref{pre_eq_SEE-stop} with the stopping time $\tau$ replaced by
\begin{equation*}
	\bar{\tau}=\inf\left\{t\geq0\relmiddle|\int^t_0f(s,\bar{Y}_s)\,\diff s\notin O_f\ \text{or}\ g(t,\bar{Y}_t)\notin O_g\right\}.
\end{equation*}
By the same arguments as above, we see that $Y_t=\bar{Y}_t$ for any $t\in[0,\tau\wedge\bar{\tau}]$ a.s. This implies that $\tau=\bar{\tau}$ a.s. Therefore, $Y$ and $\bar{Y}$ solve the same SEE. By the uniqueness of the mild solution, we get $Y_t=\bar{Y}_t$ for any $t\geq0$ a.s. This completes the proof.
\end{proof}





\begin{thebibliography}{99}

\bibitem{AbiJa21}
E.\ Abi Jaber,
Weak existence and uniqueness for affine stochastic Volterra equations with $L^1$-kernels,
{\it Bernoulli},
27(3), 1583--1615,
2021.

\bibitem{AbiJaCuLaPu21}
E.\ Abi Jaber, C.\ Cuchiero, M.\ Larsson, and S.\ Pulido,
A weak solution theory for stochastic Volterra equations of convolution type,
{\it Ann.\ Appl.\ Probab.},
31(6), 2924--2952,
2021.

\bibitem{AbiJaEu19}
E.\ Abi Jaber and O.\ El Euch,
Multifactor approximation of rough volatility models,
{\it SIAM J.\ Financ.\ Math.},
10(2), 309--349,
2019.

\bibitem{AbiJaLaPu19}
E.\ Abi Jaber, M.\ Larsson, S.\ Pulido,
Affine Volterra processes,
{\it Ann.\ Appl.\ Probab.},
29(5), 3155--3200,
2019.

\bibitem{BaBeVe11}
O.\ Barndorff-Nielsen, F.\ Benth, and A.\ Veraart,
Ambit processes and stochastic partial differential equations,
In {\it Advanced Mathematical Methods for Finance, eds G.\ Di Nunno and B.\ {\O}ksendal},
35--74,
Springer, Berlin, Heidelberg,
2011.

\bibitem{BeMi80a}
M.A.\ Berger and V.J.\ Mizel,
Volterra equations with It\^o integrals---I,
{\it J. Integral Equations},
2(3), 187--245,
1980.

\bibitem{BeMi80b}
M.A.\ Berger and V.J.\ Mizel,
Volterra equations with It\^o integrals---II,
{\it J. Integral Equations},
2(4), 319--337,
1980.

\bibitem{BuKuSc20}
O.\ Butkovsky, A.\ Kulik, and M.\ Scheutzow,
Generalized couplings and ergodic rates for SPDEs and other Markov models,
{\it Ann. Appl. Probab.},
30, 1--39,
2020.

\bibitem{Ch03}
A.S.\ Cherny,
On the uniqueness in law and the pathwise uniqueness for stochastic differential equations,
{\it Theory Probab.\ Appl.},
46(3), 406--419,
2003.

\bibitem{CuTe19}
C.\ Cuchiero and J.\ Teichmann,
Markovian lifts of positive semidefinite affine Volterra-type processes,
{\it Decis.\ Econ.\ Finance},
42, 407--448,
2019.

\bibitem{CuTe20}
C.\ Cuchiero and J.\ Teichmann,
Generalized Feller processes and Markovian lifts of stochastic Volterra processes: the affine case,
{\it J.\ Evol.\ Equ.},
20, 1301--1348,
2020.

\bibitem{Du02}
R.M.\ Dudley,
{\it Real Analysis and Probability},
Cambridge University Press,
2002.

\bibitem{EuFuRo18}
O.\ El Euch, M.\ Fukasawa, and M.\ Rosenbaum,
The microstructural foundations of leverage effect and rough volatility,
{\it Finance Stoch.},
22, 241--280,
2018.

\bibitem{Fl11}
F.\ Flandoli,
{\it Random Perturbation of PDEs and Fluid Dynamic Models},
Springer, Berlin,
2011.

\bibitem{GrLoSt90}
G.\ Gripenberg, S.O.\ Londen, and O.\ Staffans,
{\it Volterra Integral and Functional Equations},
Cambridge University Press,
1990.

\bibitem{Ha23}
Y.\ Hamaguchi,
Markovian lifting and asymptotic log-Harnack inequality for stochastic Volterra integral equations,
{\it preprint},
arXiv:2304.06683,
2023.

\bibitem{JaSh03}
J.\ Jacod and A.N.\ Shiryaev,
{\it Limit Theorems for Stochastic Processes}, 2nd edition,
Springer, Berlin,
2003.

\bibitem{JaRo16}
T.\ Jaisson and M.\ Rosenbaum,
Rough fractional diffusions as scaling limits of nearly unstable heavy tailed Hawkes processes,
{\it Ann. Appl. Probab.},
26(5), 2860--2882,
2016.

\bibitem{KaSh91}
I.\ Karatzas and S.E.\ Shreve,
{\it Brownian Motion and Stochastic Calculus}, 2nd edition,
Springer, New York,
1991.

\bibitem{Kl18}
A.\ Kulik,
{\it Ergodic Behavior of Markov Processes: With Applications to Limit Theorems},
De Gruyter,
2018.

\bibitem{KlSc20}
A.\ Kulik and M.\ Scheutzow,
Well-posedness, stability and sensitivities for stochastic delay equations: a generalized coupling approach,
{\it Ann.\ Probab.},
48(6), 3041--3076,
2020.

\bibitem{Ku14}
T.\ Kurtz,
Weak and strong solutions of general stochastic models,
{\it Electron.\ Commun.\ Probab.},
19(58), 1--16,
2014.

\bibitem{MySa15}
L.\ Mytnik and T.S.\ Salisbury,
Uniqueness of Volterra-type stochastic integral equations,
{\it preprint},
arXiv:1502.05513,
2015.

\bibitem{PrSc22a}
D.J.\ Pr\"{o}mel and D.\ Scheffels,
On the existence of weak solutions to stochastic Volterra equations,
{\it preprint},
arXiv:2207.01367,
2022.

\bibitem{PrSc22b}
D.J.\ Pr\"{o}mel and D.\ Scheffels,
Pathwise uniqueness for singular stochastic Volterra equations with H\"{o}lder coefficients,
{\it preprint},
arXiv:2212.08029,
2022.

\bibitem{PrSc23}
D.J.\ Pr\"{o}mel and D.\ Scheffels,
Stochastic Volterra equations with H\"{o}lder diffusion coefficients,
{\it Stochastic Process.\ Appl.},
161, 291--315,
2023.

\bibitem{SaKiMa87}
S.G.\ Samko, A.A.\ Kilbas, and O.I.\ Marichev,
{\it Fractional Integrals and Derivatives, Theory and Applications},
Gordon and Breach Science Publishers, Yverdon, Switzerland,
1987.

\bibitem{ScSoVo12}
R.L.\ Schilling, R.\ Song, and Z.\ Vondra\v{c}ek,
{\it Bernstein Functions: Theory and Applications},
Walter de Gruyter,
2009.

\bibitem{StVa79}
D.W.\ Stroock and S.R.S.\ Varadhan,
{\it Multidimensional Diffusion Processes},
Springer, Berlin,
1979.

\bibitem{Ve12}
M.\ Veraar,
The stochastic Fubini theorem revisited,
{\it Stochastics},
84(4), 543--551,
2012.

\bibitem{Wa08}
Z.\ Wang,
Existence and uniqueness of solutions to stochastic Volterra equations with singular kernels and non-Lipschitz coefficients,
{\it Statist. Probab. Lett.},
78, 1062--1071,
2008.

\bibitem{YaWa71}
T.\ Yamada and S.\ Watanabe,
On the uniqueness of solutions of stochastic differential equations,
{\it J.\ Math.\ Kyoto Univ.},
11, 155--167,
1971.

\bibitem{Zh10}
X.\ Zhang,
Stochastic Volterra equations in Banach spaces and stochastic partial differential equation,
{\it J.\ Funct.\ Anal.},
258, 1361--1425,
2010.

\end{thebibliography}
\end{document}